\newtheorem{proposition}{\textbf{Proposition}}
\newtheorem{remark}{\textbf{Remark}}
\newtheorem{theorem}{\textbf{Theorem}}
\newtheorem{lemma}{\textbf{Lemma}}
\newtheorem{definition}{\textbf{Definition}}
\def\dx{{\rm d}x}
\def\del  {\partial}
\def\eps{\varepsilon}
\def\R{\mathbb{R}}
\def\Z{\mathbb{Z}}
\def\N{\mathbb{N}}
\def\dx{{\rm d}x}
\def\dl{{\rm d}\tau}
\newenvironment{proof}{%
{\noindent \bf Proof : }%
}{%
\hfill$\Box$\\%
}
\begin{document}
\title{Cauchy Problem for the Kuznetsov Equation}

\author{ADRIEN DEKKERS\footnote{adrien.dekkers@centralesupelec.fr} and  ANNA ROZANOVA-PIERRAT\footnote{Laboratoire Math\'ematiques et Informatique Pour la Complexit\'e et les Syst\`emes,
Centrale Sup\'elec, Universit\'e Paris-Saclay, Campus de Gif-sur-Yvette, Plateau de Moulon, 
3 rue Joliot Curie, 91190 Gif-sur-Yvette,France, 
anna.rozanova-pierrat@centralesupelec.fr}}

\maketitle

\begin{abstract}
We consider the Cauchy problem for a model of non-linear acoustic, named the Kuznetsov equation, describing a sound propagation in thermo-viscous elastic media. For the viscous case, it is a weakly quasi-linear strongly damped wave equation, for which we prove the global existence in time of regular solutions for sufficiently small initial data, the size of which is specified, and give the corresponding energy estimates.
In the inviscid case, we update the known results of John for quasi-linear wave equations, obtaining the well-posedness results for less regular initial data. We obtain, using a priori estimates and a Klainerman inequality, the estimations of the maximal existence time, depending on the space dimension, which are optimal, thanks to the blow-up results of Alinhac.  Alinhac's blow-up results are also confirmed by a $L^2$-stability estimate, obtained between a regular and a less regular solutions.
\end{abstract}
\section{Introduction}\label{intro}
The Kuznetsov equation~\cite{Kuznetsov} models a propagation of non-linear acoustic waves in thermo-viscous elastic media.
This equation describes the evolution of the velocity potential and can be derived, as in~\cite{Roz1}, from a compressible isentropic
Navier-Stokes system, for the viscous case, or the Euler system for the inviscid case, using small perturbations of the density and of the velocity characterized by a small dimensionless parameter $\varepsilon>0$. The Cauchy problem for the Kuznetsov equation  reads for  $\alpha=\frac{\gamma-1}{c^2}$, $\beta=2$ and $\nu= \frac{\delta}{\rho_0}$ as
\begin{align}
& u_{tt}-c^2\Delta u- \nu \varepsilon \Delta u_t=\alpha\varepsilon u_t  u_{tt}+\beta \varepsilon \nabla u \;\nabla u_t,\;\;\;\;x\in\mathbb{R}^n, \label{kuz}\\
& u(x,0)=u_0(x),\quad u_t(x,0)=u_1(x), \quad x\in\mathbb{R}^n,\label{ci}
\end{align}
where $c$, $\rho_0$, $\gamma$, $\delta$ are the velocity of the sound, the density, the ratio of the specific heats and the
viscosity of the medium respectively. In what follows, we can just suppose that $\alpha$ and $\beta$ are some positive constants.
Eq. (\ref{kuz}) is a weakly quasi-linear damped wave equation, that describes a propagation of a high amplitude wave in fluids.
The Kuznetsov equation is one of the models derived from the Navier-Stokes system, and it is well suited for the plane, cylindrical and spherical waves in a fluid~\cite{Hamilton}. Most of the works on the Kuznetsov equation~(\ref{kuz}) are treated in the one space dimension~\cite{Jordan} or in a bounded spatial domain of $\R^n$~\cite{Kalt2,Kalt1,Meyer}.
For the viscous case Kaltenbacher and Lasiecka~\cite{Kalt1} have considered the Dirichlet boundary valued problem and proved for sufficiently small  initial data the global well-posedness
  for $n\leq 3$. Meyer and Wilke~\cite{Meyer} have proved it for all $n$. In~\cite{Kalt2} it was proven a local well-posedness of the Neumann boundary valued problem  for $n\leq 3$.

In this article we study the well-posedness properties of the Cauchy problem~(\ref{kuz})--(\ref{ci}).
In the inviscid case for $\nu=0$, the Cauchy problem for the Kuznetsov equation is a particular case of a general quasi-linear hyperbolic system of the second order considered by  Hughes, Kato and Marsden~\cite{Kato} (see Theorem~\ref{ThMainWPnu0} Points~1 and~2 for the application of their results to the Kuznetsov equation). The local well-posedness result, proved in~\cite{Kato}, does not use  a priori estimate techniques and is based on the semi-group theory. Hence, thanks to~\cite{Kato},  we have the well-posedness of~(\ref{kuz})--(\ref{ci}) in the Sobolev spaces $H^s$ with  a real $s>\frac{n}{2}+1$.  Therefore, actually, to extend the local well-posedness to a global one (for $n\ge 4$) and to estimate the maximal time interval on which there exists a regular solution, John~\cite{John} has developed a priori estimates for the Cauchy problem for a  general quasi-linear wave equation. This time, due to the non-linearities $u_t  u_{tt}$ and $\nabla u \;\nabla u_t$ including the time derivatives, to have an a priori estimate for the Kuznetsov equation we need to work with Sobolev spaces with a natural $s$, thus denoted in what follows  by $m$.
If we directly apply general results of Ref.~\cite{John} to our case of  the Kuznetsov equation, we obtain a well-posedness result with a  high regularity of the initial data.
We improve it in Theorem~\ref{ThTEt1} and show John's results for the Kuznetsov equation with the minimal regularity on the initial data corresponding to the regularity obtained by Hughes, Kato and Marsden~\cite{Kato}.  For instance, we prove the analogous energy estimates in $H^m$ with $m\geq [\frac{n}{2}+2]$ instead of John's $m\geq \frac{3}{2}n+4$ (see Eq.~(\ref{EqENfirstNV}) in Proposition~\ref{PropApEstNV1}) and its slight modified version in $H^m$ with $m\geq  [\frac{n}{2}+3]$ instead of $m\geq \frac{3}{2}n+7$ (see Eq.~(\ref{EqEnSecondNV}) in Proposition~\ref{PropApEstNV2}).
The energy estimates allow us to evaluate the maximal existence time  interval (see Theorem~\ref{ThMainWPnu0} Point~5 and Theorem~\ref{ThTimeExistNu0} for more details).
In $\R^2$ and $\R^3$ the optimality of  obtained estimations for the maximal existence time is ensured by the results of  Alinhac~\cite{Alinhac}. In Ref.~\cite{Alinhac} a geometric blow-up for small data  is proved for $\del_t^2u$ and $\Delta u$ at a finite time of the same order as predicted by our a priori estimates (see Theorem~\ref{ThMainWPnu0} Point~5, our estimates of the minimum existence time correspond to Alinhac's maximum existence time results).
From the other hand, the blow-up of $\del_t^2u$ and $\Delta u$ is also confirmed by the stability estimate~(\ref{StabEstNV}) in Theorem~\ref{ThMainWPnu0}: if the maximal existence time interval is finite and limited by $T^*$, by Eq.~(\ref{StabEstNV}), we have the divergence
\begin{equation}\label{EqDivInt}
 \int_0^{T^*}\left( \Vert u_{tt}\Vert_{L^{\infty}(\mathbb{R}^n)}+ \Vert \Delta u\Vert_{L^{\infty}(\mathbb{R}^n)}\right)\dl=+\infty.
\end{equation}
For $n\ge4$ and $\nu=0$, we also improve the results of John~\cite{John}  and show the global existence for sufficiently small initial data  $u_0\in H^{m+1}(\mathbb{R}^n)$ and $u_1\in H^m(\mathbb{R}^n)$ with $m\ge  n+2$ instead of $m\geq \frac{3}{2}n+7$ (see Proposition~\ref{Propglobapenu0} and Theorem~\ref{ThTimeExistNu0}). The smallness of the initial data here directly ensures the hyperbolicity of the Kuznetsov equation for all time, $i.e.$ it ensures that $1-\alpha\varepsilon u_t$ is strictly positive and bounded for all time.
The proof uses the generalized derivatives for the wave type equations~\cite{John} and a priori estimate of Klainerman~\cite{Klainerman1, Klainerman2} (see Section~\ref{SecJohnn4}).

Let us now formulate our main well-posedness result for the inviscid case:
\begin{theorem}\textbf{(Inviscid case)}\label{ThMainWPnu0}
  Let $\nu=0$, $n\in \mathbb{N}^*$ and $s>\frac{n}{2}+1$. For all $u_0\in H^{s+1}(\mathbb{R}^n)$ and $u_1\in H^s(\mathbb{R}^n)$ such that $\Vert u_1\Vert_{L^{\infty}(\mathbb{R}^n)}< \frac{1}{2 \alpha\varepsilon}$, $\Vert u_0\Vert_{L^{\infty}(\mathbb{R}^n)}<M_1$, $\Vert \nabla u_0\Vert_{L^{\infty}(\mathbb{R}^n)}<M_2$, with $M_1$ and $M_2$ in $\mathbb{R}^*_+$ the following results hold:
 \begin{enumerate}
  \item For all $T>0$, there exists $T'>0$, $T'\le T$, such that there exists a unique solution $u$ of~(\ref{kuz})--(\ref{ci}) with the following regularity
\begin{align}
u\in C^r([0,T'];H^{s+1-r}(\mathbb{R}^n))\;\;\text{for}\;\;0\leq r\leq s,\label{regkuznu0}\\
\forall t\in [0,T'],\;\;\Vert u_t(t)\Vert_{L^{\infty}(\mathbb{R}^n)}< \frac{1}{2 \alpha\varepsilon},\;\;\Vert u\Vert_{L^{\infty}(\mathbb{R}^n)}<M_1,\;\;\Vert \nabla u\Vert_{L^{\infty}(\mathbb{R}^n)}<M_2.\label{conshyper}
\end{align}
\item The map $(u_0,u_1)\mapsto (u(t,.),\partial_t u(t,.))$ is continuous in the topology of $H^{s+1}\times H^s$ uniformly in $t\in[0,T']$.
\item Let $T^*$ be the largest time on which such a solution is
defined, and in addition $s\in \N$, $i.e.$ $s=m\ge m_0=[\frac{n}{2}+2]$. With the notation
\begin{equation}\label{EnergEN}
        E_{m}[u](t)=\Vert \nabla u(t)\Vert_{H^m(\mathbb{R}^n)}^2+\sum_{i=1}^{m+1} \Vert \partial_t^i u(t)\Vert_{H^{m+1-i}(\mathbb{R}^n)}^2,
       \end{equation}
there  exist  constants $C(n,c,\alpha)>0$ and $\hat{C}(n,c,\alpha,\beta)>0$ (see Theorem~\ref{ThTEt1}) such that if the initial data satisfies $\sqrt{E_{m_0}[u](0)}\le \frac{1}{C(n,c,\alpha)\eps}$,  then
\begin{equation}\label{etoile}
T^*\ge \frac{1}{\eps \hat{C}(n,c,\alpha,\beta) \sqrt{E_{m_0}[u](0)}}, \hbox{ such that it holds~(\ref{EqDivInt}).}
\end{equation}
\item  For two solutions $u$ and $v$ of the Kuznetsov equation for $\nu=0$ defined on $[0,T^*[$ assume that  $u$ be regular as in~(\ref{regkuznu0})--(\ref{conshyper}), $i.e.$
$u\in L^{\infty}([0,T^*[;H^{m+1}(\mathbb{R}^n))$, $u_t \in L^{\infty}([0,T^*[;H^{m}(\mathbb{R}^n))$ ($s=m$ as in Point~3), and
 $$v\in L^{\infty}([0,T^*[;H^1(\mathbb{R}^n)),\quad v_t\in L^{\infty}([0,T^*[;L^2(\mathbb{R}^n))\hbox{ with }\Vert v\Vert_{L^{\infty}(\mathbb{R}^n)}< \frac{1}{2 \alpha\varepsilon}$$ and with a bounded $\|\nabla v_t\|_{L^\infty(\mathbb{R}^n)}$ norm on $[0,T^*[$.
Then it holds the following stability uniqueness result: there exist constants $C_1>0$ and $C_2>0$, independent on time, such that
\begin{multline}
(\Vert (u-v)_t\Vert_{L^2}^2+ \Vert \nabla(u-v)\Vert_{L^2}^2)(t)
\leq C_1 \exp\left(C_2\varepsilon \int_0^t\sup( \Vert u_{tt}\Vert_{L^{\infty}(\mathbb{R}^n)}, \Vert \Delta u\Vert_{L^{\infty}(\mathbb{R}^n)})\dl\right)\\
.(\Vert u_1-v_1\Vert_{L^2}^2+ \Vert \nabla(u_0-v_0)\Vert_{L^2}^2).\label{StabEstNV}
\end{multline}
 \item If  $s=m\ge n+2$, then for  sufficiently small initial data (see Theorem~\ref{ThTimeExistNu0} in Section~\ref{SecJohnn4})
\begin{enumerate}
             \item  $\liminf_{\varepsilon\rightarrow 0} \varepsilon^2 T^* >0 \;\;\;\text{for}\;n=2,$
\item $\liminf_{\varepsilon\rightarrow 0} \varepsilon \log(T^*)>0\;\;\;\text{for}\;n=3,$
\item $T^*=+\infty$ for $n\ge 4$.
            \end{enumerate}
                        \end{enumerate}
\end{theorem}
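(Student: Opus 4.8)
The plan is to read Theorem~\ref{ThMainWPnu0} as an assembly of four essentially independent blocks — a local theory borrowed from the semigroup framework of Hughes, Kato and Marsden~\cite{Kato}, an energy-based continuation argument, an $L^2$ stability inequality, and a long-time analysis — tied together by the quasi-linear reformulation of the Kuznetsov equation. The common starting point is to divide~(\ref{kuz}) (with $\nu=0$) by the coefficient $1-\alpha\eps u_t$, which the hypothesis $\Vert u_1\Vert_{L^\infty}<\frac{1}{2\alpha\eps}$ keeps strictly positive at $t=0$, rewriting the equation as
\begin{equation*}
u_{tt}=\frac{c^2}{1-\alpha\eps u_t}\,\Delta u+\frac{\beta\eps}{1-\alpha\eps u_t}\,\nabla u\cdot\nabla u_t .
\end{equation*}
For Points~1 and~2 I would check that this is a second-order quasi-linear hyperbolic system in the sense of~\cite{Kato}, with coefficients smooth in $(u,u_t,\nabla u)$ as long as $1-\alpha\eps u_t$ stays away from zero; since $s>\frac n2+1$ gives $H^s\hookrightarrow W^{1,\infty}$, their theory delivers the unique solution with regularity~(\ref{regkuznu0}) and the continuous dependence in $H^{s+1}\times H^s$. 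The persistence of the three $L^\infty$ bounds~(\ref{conshyper}) on a possibly shorter interval $[0,T']$ then follows from continuity of $t\mapsto(u,u_t,\nabla u)(t)$ with values in $H^s\hookrightarrow L^\infty$: the strict inequalities hold at $t=0$ and survive for short time, which also fixes the choice $T'\le T$.

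For Point~3 I would invoke the a priori energy estimates of Theorem~\ref{ThTEt1} (Propositions~\ref{PropApEstNV1} and~\ref{PropApEstNV2}), which for the energy $E_m$ of~(\ref{EnergEN}) yield a differential inequality of the schematic Riccati form $\frac{\dd}{\dt}\sqrt{E_m}\le \eps\,\hat C\,E_m$. Integrating gives $\sqrt{E_m(t)}\le \sqrt{E_m(0)}\,/\,(1-\eps\hat C\,t\sqrt{E_m(0)})$, finite up to $t=1/(\eps\hat C\sqrt{E_m(0)})$; choosing the data so small that $\sqrt{E_{m_0}[u](0)}\le 1/(C\eps)$ keeps $\Vert u_t\Vert_{L^\infty}\lesssim\sqrt{E_{m_0}}$ below $\frac{1}{2\alpha\eps}$ on that interval, so hyperbolicity never degenerates and the local solution extends, yielding the lower bound~(\ref{etoile}). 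The divergence~(\ref{EqDivInt}) is the contrapositive of the continuation criterion: by~(\ref{StabEstNV}) the energy is controlled by $\exp\!\big(C_2\eps\int_0^t\sup(\Vert u_{tt}\Vert_{L^\infty},\Vert\Delta u\Vert_{L^\infty})\dl\big)$, so a finite $T^*$ forces this integral to blow up.

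The stability estimate of Point~4 is the technical heart. I would set $w=u-v$, subtract the two equations and regroup the principal part around $u$, obtaining
\begin{equation*}
(1-\alpha\eps u_t)w_{tt}-c^2\Delta w=\alpha\eps\,v_{tt}\,w_t+\beta\eps\big(\nabla u\cdot\nabla w_t+\nabla w\cdot\nabla v_t\big),
\end{equation*}
test against $w_t$, and differentiate $\mathcal E=\frac12\int[(1-\alpha\eps u_t)w_t^2+c^2|\nabla w|^2]\dx$. Integration by parts cancels the principal cross term; the term $\int\nabla u\cdot\nabla w_t\,w_t\,\dx=-\frac12\int\Delta u\,w_t^2\,\dx$ produces the $\Vert\Delta u\Vert_{L^\infty}$ in the exponent, the contribution $\int u_{tt}w_t^2\,\dx$ produces $\Vert u_{tt}\Vert_{L^\infty}$, and $\int\nabla w\cdot\nabla v_t\,w_t\,\dx$ is absorbed by the bounded $\Vert\nabla v_t\Vert_{L^\infty}$. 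The one genuinely delicate contribution carries the second time derivative of the \emph{less regular} solution, $\int v_{tt}\,w_t^2\,\dx$: since $v$ has only $v_t\in L^\infty_tL^2_x$, $v_{tt}$ is not directly controlled, and I expect this to be the main obstacle. The way through is to eliminate $v_{tt}$ by substituting $v$'s own equation, $v_{tt}=(1-\alpha\eps v_t)^{-1}(c^2\Delta v+\beta\eps\,\nabla v\cdot\nabla v_t)$, and then to move the resulting $\Delta v$ onto the other factors by integration by parts; afterwards $v\in H^1$ (so $\nabla v\in L^2$), the bound on $\Vert\nabla v_t\Vert_{L^\infty}$ together with $\Vert v_t\Vert_{L^\infty}<\infty$ (finite since $\nabla v_t\in L^\infty$ and $v_t\in L^2$), and $u\in H^m$ render every remaining integral finite. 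Coercivity of $\mathcal E$ from $1-\alpha\eps u_t\ge\frac12$ and Gronwall's lemma then give exactly~(\ref{StabEstNV}).

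Finally, for the long-time statements of Point~5 I would follow the John--Klainerman programme~\cite{John,Klainerman1,Klainerman2}: apply the vector fields generating translations, rotations and the scaling/Lorentz boosts to the equation, derive a generalized (conformal) energy estimate, and feed it into the Klainerman--Sobolev inequality to obtain pointwise decay of order $t^{-(n-1)/2}$. Inserting this decay into the quadratic a priori estimate of Proposition~\ref{Propglobapenu0} closes a continuity argument: for $n\ge4$ the decay is integrable, the energy stays small for all time and $T^*=+\infty$; for $n=3$ the borderline logarithmic loss gives almost-global existence $\eps\log T^*\gtrsim1$; and for $n=2$ the slower decay gives $\eps^2T^*\gtrsim1$. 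These are exactly the bounds recorded in Theorem~\ref{ThTimeExistNu0}, whose optimality for $n=2,3$ is matched by Alinhac's geometric blow-up~\cite{Alinhac}.
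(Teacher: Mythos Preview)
Your overall architecture matches the paper exactly: Points~1--2 are delegated to Hughes--Kato--Marsden~\cite{Kato}, Point~3 to the a~priori estimates of Propositions~\ref{PropApEstNV1}--\ref{PropApEstNV2} and Theorem~\ref{ThTEt1}, Point~5 to the generalized-derivative energy of Proposition~\ref{Propglobapenu0} combined with the Klainerman inequality (Proposition~\ref{PropKlainerman}) and Theorem~\ref{ThTimeExistNu0}, and Point~4 to a weak--strong energy argument (which the paper itself omits, citing~\cite{Roz2}).

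The one place where your sketch does not close is the handling of $v_{tt}$ in Point~4. After substituting $v_{tt}=(1-\alpha\eps v_t)^{-1}(c^2\Delta v+\beta\eps\nabla v\cdot\nabla v_t)$ and integrating the $\Delta v$ by parts, you produce terms of the type
\[
\int_{\R^n}\frac{\nabla v\cdot\nabla w_t}{1-\alpha\eps v_t}\,w_t\,\dx ,
\]
which, with only $\nabla v\in L^2$ and $\nabla w_t\in L^\infty$, are bounded by $C\|w_t\|_{L^2}\sim C\sqrt{\mathcal E}$ rather than by $C\mathcal E$; that is not enough for Gronwall to yield~(\ref{StabEstNV}) (in particular it does not force $\mathcal E\equiv 0$ when $\mathcal E(0)=0$). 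The cleaner route is to avoid the substitution entirely and write $v_{tt}=u_{tt}-w_{tt}$: the piece $\int u_{tt}w_t^2\,\dx$ goes straight into the exponent as $\|u_{tt}\|_{L^\infty}\mathcal E$, while $\int w_{tt}w_t^2\,\dx=\tfrac13\tfrac{\dd}{\dt}\int w_t^3\,\dx$ is absorbed into the energy, giving the modified functional
\[
\mathcal E(t)=\frac12\int_{\R^n}\Big[(1-\alpha\eps u_t-\tfrac{\alpha\eps}{3}w_t)\,w_t^2+c^2|\nabla w|^2\Big]\dx ,
\]
whose coefficient $1-\alpha\eps(\tfrac13 u_t+\tfrac23 v_t)$ is $\ge\tfrac12$ under the hyperbolicity bounds on $u_t$ and $v_t$. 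With this adjustment the Gronwall closes with exactly the factor $\sup(\|u_{tt}\|_{L^\infty},\|\Delta u\|_{L^\infty})$ appearing in~(\ref{StabEstNV}); the $\|\nabla v_t\|_{L^\infty}$ bound is used only as a time-independent constant absorbed into $C_2$. (Your observation that $v_t\in L^2$ with $\nabla v_t\in L^\infty$ forces $v_t\in L^\infty$ is correct and is what makes the coercivity work.)
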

Theorem~\ref{ThMainWPnu0} is principally based on the a priori estimates given in Sections~\ref{SecKato} (for Point~3) and~\ref{SecJohnn4} (for Point~5) and on the local existence result updated from Ref.~\cite{Kato} (Points 1 and 2).  Point~4  uses the classical ideas of the weak-strong stability, for instance proved in details for the KZK equation in~\cite{Roz2} Theorem~1.1 Point~4 p.~785. Hence its proof is omitted. Some technical details on the proof of the a priori estimates of Section~\ref{SecKato} can be found in Appendix~\ref{Appen1}.

Analyzing the structure of the Kuznetsov equation and the difficulties involving by its non-linear terms, we start in Section~\ref{SecENL2} with preliminary remarks on the $L^2$-energy properties for the Kuznetsov equation to compare with its simplified versions. Developing the energy estimates in the Sobolev spaces, we however recognize the structure of the $L^2$-energy of the wave equation which keeps unchanged.

In the presence of the term $\Delta u_t$ for the viscous case $\nu>0$, the regularity of the higher order time derivatives of $u$ is different (to compare to the inviscid case), and the way to control the non-linearities in the a priori estimates becomes different. As it was shown in~\cite{Shibata}, this dissipative term changes a finite speed of propagation of the wave equation to the infinite one. Indeed, the linear part of Eq.~(\ref{kuz}) can be viewed as two compositions of the heat operator $\del_t-\Delta$ in the following way:
$$u_{tt}-c^2\Delta u- \nu \varepsilon \Delta u_t=\del_t(\del_t u -\eps \nu \Delta u)-c^2 \Delta u.$$

For the viscous case we prove the global in time well-posedness results in $\R^n$  (see  Section~\ref{secV}) for small enough initial data, the size of which we specify (see Point~1 of Theorem~\ref{ThMainWPnuPGlob} and Subsection~\ref{secVPr} for its proof). In Subsection~\ref{sec4} for $n\ge 3$ (see Point~2 of Theorem~\ref{ThMainWPnuPGlob}) we establish an a priori estimate which gives also a sufficient condition of the existence of a global solution for a sufficiently small initial energy of the same order on $\eps$ as in Point~1 of Theorem~\ref{ThMainWPnuPGlob}. The same results (see Point~3 of Theorem~\ref{ThMainWPnuPGlob}) hold in $(\R/L\Z)\times \R^{n-1}$ for $n\ge 2$ (with a periodicity and mean value zero on one variable).

\begin{theorem}\textbf{(Viscous case)}\label{ThMainWPnuPGlob}
 Let $\nu>0$, $n\in \mathbb{N}^*$, $s>\frac{n}{2}$ and $\R^+=[0,+\infty[$. Considering the Cauchy problem for the Kuznetsov equation~(\ref{kuz})--(\ref{ci}), the following results hold:
 \begin{enumerate}
  \item Let   $$X:=H^2(\R^+;H^{s}(\mathbb{R}^n))\cap H^1(\R^+;H^{s+2}(\mathbb{R}^n)),$$ the initial data
  $$u_0\in H^{s+2}(\mathbb{R}^n)\quad  \hbox{and}\quad u_1\in H^{s+1}(\mathbb{R}^n),$$
  $r_*=O(1)$ be the positive constant defined in  Eq.~(\ref{Eqret}) and $C_1=O(1)$ be the minimal constant such that the
  solution $u^*$ of the corresponding linear Cauchy problem (\ref{kuzlin0inhom}) satisfies
  $$\|u^*\|_X\le \frac{C_1}{\sqrt{\nu \eps}}(\Vert u_0\Vert_{H^{s+2}(\mathbb{R}^n)}+\Vert u_1\Vert_{H^{s+1}(\mathbb{R}^n)}).$$

 Then  for all  $r\in[0,r_{*}[$
  and all
  initial data  satisfying
\begin{equation}\label{EqInDSmal1}
  \Vert u_0\Vert_{H^{s+2}(\mathbb{R}^n)}+\Vert u_1\Vert_{H^{s+1}(\mathbb{R}^n)}\le \frac{\sqrt{\nu \eps}}{C_1}r,
  \end{equation}
there exists the unique solution $u\in X$  of the Cauchy problem for the Kuznetsov equation and $\Vert u\Vert_X\leq 2 r$.

  \item Let $n\ge 3$, $s=m\in \N$ be even and $m\geq [\frac{n}{2}+3]$.  With the notation
\begin{equation}\label{EnergEN2}
        E_{\frac{m}{2}}[u](t)=\Vert \nabla u(t)\Vert_{H^m(\mathbb{R}^n)}^2+\sum_{i=1}^{\frac{m}{2}+1} \Vert \partial_t^i u(t)\Vert_{H^{m-2(i-1)}(\mathbb{R}^n)}^2,
       \end{equation}
there  exists a constant $\rho=O(1)>0$ (see Theorem~\ref{ThDecrEnnu} Point~2), independent on time, such that for all initial data $u_0\in H^{m+1}(\mathbb{R}^n)$ and $u_1\in H^m(\mathbb{R}^m)$ satisfying
\begin{equation}\label{EqInDSmal2}
   E_{\frac{m}{2}}[u](0)<\rho\eps,                                                                                                                                                            \end{equation}
  there exists a unique $u\in C^{0}(\R^+;H^{m+1}(\mathbb{R}^n))\cap C^i(\R^+;H^{m+2-2i}(\mathbb{R}^n)),$ for $i=1,..,\frac{m}{2}+1$ with the bounded energy
$$\forall t\in \mathbb{R}^+,\;\;\; E_{\frac{m}{2}}[u](t)\leq O\left(\frac{1}{\eps}\right)\; E_{\frac{m}{2}}[u](0)=O(1).$$
\item For $n\in \N^*$ in $\Omega=(\R/L\Z)\times \R^{n-1}$ with $s=m\in \N$  even and $m\geq [\frac{n}{2}+3]$ there hold Points~1 and~2 in the class of periodic in one direction functions with the mean value zero
\begin{equation}\label{percond}
\int_{\R/L\Z} u(t,x,y)\; \dx=0.
\end{equation}
\end{enumerate}
\end{theorem}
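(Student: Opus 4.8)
The plan is to treat the three points separately, throughout exploiting the factorisation $u_{tt}-c^2\Delta u-\nu\eps\Delta u_t=\del_t(\del_t u-\eps\nu\Delta u)-c^2\Delta u$ noted above, which presents the equation as a strongly damped wave equation whose parabolic dissipation $-\nu\eps\Delta u_t$ provides the gain of two spatial derivatives per time derivative reflected both in the space $X$ and in the energy $E_{m/2}$. For Point~1 I would run a Banach fixed-point argument in the ball $B(0,2r)\subset X$: given $v\in X$, let $\Phi(v)=u$ solve the linear inhomogeneous problem~(\ref{kuzlin0inhom}) with data $(u_0,u_1)$ and source $f(v)=\alpha\eps v_t v_{tt}+\beta\eps\nabla v\cdot\nabla v_t$. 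The linear estimate with constant $C_1$ controls the data contribution, which by~(\ref{EqInDSmal1}) is at most $r$. For the source I would use that $s>\tfrac n2$ makes $H^s(\R^n)$ an algebra and that the definition of $X$ yields, via interpolation and the trace embedding in time, $u_t\in C(\R^+;H^{s+1})$ and $u\in C(\R^+;H^{s+2})$ with norms bounded by $\|u\|_X$; hence $\|f(v)\|\lesssim\eps\|v\|_X^2$ in the relevant source norm. Combining gives $\|\Phi(v)\|_X\le r+\tfrac{C_1}{\sqrt{\nu\eps}}\,C\eps\,\|v\|_X^2$, and the analogous bound on differences yields a contraction; the constant $r_*$ of~(\ref{Eqret}) is precisely the largest $r$ for which $\Phi$ self-maps $B(0,2r)$ and contracts, so that $\Phi$ has a unique fixed point with $\|u\|_X\le 2r$.

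Point~2 is an a priori energy estimate closed by a continuity argument. Applying $\del^\gamma\del_t^i$ to the equation for $|\gamma|+2(i-1)\le m$, multiplying by the matching derivative of $u_t$ and integrating produces, thanks to the damping, a dissipation $\nu\eps\,D[u]\sim\nu\eps\sum\|\nabla\del^\gamma\del_t^i u\|_{L^2}^2$ on the left together with the time derivative of a quantity equivalent to $E_{m/2}[u]$. The hierarchy $\del_t^i u\in H^{m-2(i-1)}$ is respected by using the equation itself to trade each extra time derivative for two spatial ones through the factorised form. The nonlinear terms, each carrying a factor $\eps$, are estimated using $m\ge[\tfrac n2+3]$ so that the lowest-regularity factors embed in $L^\infty$, and are absorbed into $\nu\eps\,D[u]$; this yields a differential inequality of the form $\tfrac{d}{dt}E_{m/2}[u]+\nu\eps\,D[u]\le C\eps\sqrt{E_{m/2}[u]}\,(E_{m/2}[u]+D[u])$. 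Provided $E_{m/2}[u](0)<\rho\eps$ with $\rho$ small enough, the quadratic term is dominated by the dissipation, the hyperbolicity constraint $1-\alpha\eps u_t>0$ is preserved, and a standard bootstrap gives $E_{m/2}[u](t)\le O(1/\eps)\,E_{m/2}[u](0)=O(1)$ for all $t$, hence global existence in the stated regularity class.

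The main obstacle, common to both points, is the control of the genuinely quasi-linear products involving the highest time derivative, above all $u_t u_{tt}$: in the energy hierarchy $u_{tt}$ sits at low spatial regularity, so a naive algebra estimate loses too much, and one must repeatedly substitute $\del_t^2 u=c^2\Delta u+\eps\nu\Delta u_t+\text{NL}$ to convert time into space derivatives before placing each factor in the right space. The evenness of $m$ and the threshold $m\ge[\tfrac n2+3]$ are dictated by this bookkeeping, and the careful tracking of the powers of $\nu$ and $\eps$ through the linear smoothing estimate is what makes $r_*$ and $\rho$ turn out to be $O(1)$ while forcing the admissible data to be of size $O(\sqrt{\nu\eps})$.

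Point~3 follows by repeating Points~1 and~2 on $\Omega=(\R/L\Z)\times\R^{n-1}$ within the mean-zero class~(\ref{percond}). The only new ingredient is that on the compact factor the zero frequency lies in the kernel of both $\Delta$ and the damping, so coercivity of the energy and the linear smoothing estimate would fail on constants in the periodic variable; imposing $\int_{\R/L\Z}u\,\dx=0$ restores a Poincar\'e inequality and removes the degenerate mode in the Fourier analysis on $\Z\times\R^{n-1}$. I would then check that this mean-zero subspace is invariant under the flow and that $H^m(\Omega)$ remains an algebra for $m>\tfrac n2$, after which the estimates of Points~1 and~2 transfer verbatim.
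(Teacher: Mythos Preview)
Your outline for Point~1 is essentially the paper's argument; the paper invokes an abstract fixed-point theorem of Sukhinin (Theorem~\ref{thSuh}), but since the linear operator $L:X_0\to Y$ is a bijection this reduces, as the paper itself remarks, to the contraction mapping principle you use. One correction: the constant in the source-to-solution estimate for the linear problem with \emph{zero} data is $O(1/(\nu\eps))$, not $O(1/\sqrt{\nu\eps})$ (the $1/\sqrt{\nu\eps}$ arises only for the data contribution, cf.~(\ref{IneqLin})). With the correct constant the self-map condition reads $4\tfrac{C_0}{\nu}(\alpha+\beta)C_{\mathrm{emb}}\,r<1$, which is exactly~(\ref{Eqret}) and gives $r_*=O(1)$.

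For Point~2 there is a genuine gap. The inequality you write,
\[
\frac{d}{dt}E_{m/2}[u]+\nu\eps\,D[u]\;\le\;C\eps\sqrt{E_{m/2}[u]}\,\bigl(E_{m/2}[u]+D[u]\bigr),
\]
does not close globally: once $C\eps\sqrt{E}\,D$ is absorbed by the dissipation, the residual term $C\eps\sqrt{E}\,E$ yields $\tfrac{d}{dt}E\le C'\eps E$, hence exponential growth and no uniform bound on~$\R^+$. The source of the bad term is precisely your choice to put the low factor in $L^\infty$ and the remaining two in $L^2$, which produces $\sqrt{E}\cdot E$. The paper's key observation (Lemma~\ref{PropEnergiVisc}) is that, for $n\ge 3$, every cubic commutator term can be estimated by $C\eps\sqrt{E_{m/2}}\,S_{m/2}$ with \emph{no} $E$-contribution: one systematically places the test factor $D^Au_t$ in $L^{2n/(n-2)}$ via Gagliardo--Nirenberg--Sobolev $\|v\|_{L^{2n/(n-2)}}\le C\|\nabla v\|_{L^2}$, and distributes the other two among $L^p,L^q$ with $\tfrac1p+\tfrac1q=\tfrac{n+2}{2n}$, so that each contributes a gradient and hence a factor of $\sqrt{S_{m/2}}$. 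This is why $n\ge 3$ is needed in Point~2. The resulting inequality $\tfrac{d}{dt}E+2\nu\eps\,S_{m/2}\le C\eps\sqrt{E_{m/2}}\,S_{m/2}$ makes the modified energy genuinely monotone decreasing as long as $\sqrt{E}<\sqrt{2}\nu/C_m$, and the bootstrap then closes for all time. The lengthy case analysis required to force every term into the shape $\sqrt{E}\,S$ rather than $\sqrt{E}\,E$ is the technical core of the proof and cannot be replaced by an $L^\infty$ embedding.

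Your Point~3 is correct in spirit: the mean-zero condition~(\ref{percond}) restores the Poincar\'e inequality~(\ref{Poincar}), which in the periodic direction substitutes for the Gagliardo--Nirenberg--Sobolev step and allows the estimate of Lemma~\ref{PropEnergiVisc} to go through for all $n\ge 1$, not only $n\ge 3$.
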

Let us notice that the hyperbolicity condition~(\ref{conshyper}) is also satisfied if we require conditions~(\ref{EqInDSmal1}) and~(\ref{EqInDSmal2}). For $\nu>0$ Point~4 of Theorem~\ref{ThMainWPnu0} obviously holds for all $n\in \N^*$. Point~1 of Theorem~\ref{ThMainWPnuPGlob} is proved in Subsection~\ref{secVPr} using a theorem of a non-linear analysis~\cite{Sukhinin} (see Theorem~\ref{thSuh}) and regularity results for the strongly damped wave equation following~\cite{Haraux}, which can also be used for $\Omega=(\R/L\Z)\times \R^{n-1} $ in point~3.
Point~2 of Theorem~\ref{ThMainWPnuPGlob} is proved in Subsection~\ref{sec4}, using  a priori estimates given in Proposition~\ref{PropEnergiVisc}, see also~Theorem~\ref{ThDecrEnnu}.
The last point of Theorem~\ref{ThMainWPnuPGlob} is a direct corollary of the Poincar\'{e} inequality
\begin{equation}\label{Poincar}
 \Vert u\Vert_{L^2((\R/LZ)\times \R^{n-1})}\leq C \Vert \partial_x u\Vert_{L^2((\R/LZ)\times \R^{n-1})},
\end{equation}
which holds in the class of periodic functions with the mean value zero.
Estimate~(\ref{Poincar}) allows to have the same estimate as in Lemma~\ref{PropEnergiVisc} (see Section~\ref{secV}) for $n=2$, which fails in $\R^2$.
Thus, it also gives the global existence for rather small initial data detailed in Point~2.

\section{Preliminary remarks on $L^2$-energies}\label{SecENL2}
We can notice that Eq.~(\ref{kuz}) is a wave equation containing a dissipative term $\Delta u_t$ and two non-linear terms:  $\nabla u\nabla u_t$ describing local non-linear effects and $u_t u_{tt}$ describing  global or cumulative effects. 
Actually, the linear wave equation appears from Eq.~(\ref{kuz}) if we consider only the terms of the zero order on $\varepsilon$:
\begin{equation}\label{wave}
u_{tt}-c^2\Delta u =0.
\end{equation}
The semi-group theory permits in the usual  way  to show that for $u_0\in H^1(\mathbb{R}^n)$ and $u_1\in L^2(\mathbb{R}^n)$ there exists a unique solution of the Cauchy problem~(\ref{wave}),~(\ref{ci})
$$u\in C^0(\R^+;H^1(\mathbb{R}^n))\cap C^1(\R^+;L^2(\mathbb{R}^n)).$$So the energy of the wave equation~(\ref{wave})
\begin{equation}\label{enwave}
E(t)=\int_{\mathbb{R}^n} [(u_t)^2+c^2(\nabla u)^2](t,x)\dx,
\end{equation}
is well defined and  conserved
$$\frac{d}{dt}E(t)=0.$$

For $\nu>0$ and without non-linear terms, the Kuznetsov equation~(\ref{kuz}) becomes the known strongly damped wave equation:
\begin{equation}\label{dampwave}
u_{tt}-c^2\Delta u- \nu \varepsilon \Delta u_t =0,
\end{equation}
which is well-posed~\cite{Ikehata}: for $m\in\mathbb{N}$, $u_0\in H^{m+1}(\mathbb{R}^n)$ and $u_1\in H^{m}(\mathbb{R}^n)$ there exists a unique solution of the Cauchy problem~(\ref{dampwave}),~(\ref{ci})
$$u\in C^0(\R^+;H^{m+1}(\mathbb{R}^n))\cap C^1(\R^+;H^m(\mathbb{R}^n)).$$

Multiplying Eq.~(\ref{dampwave}) by $u_t$ in $L^2(\mathbb{R}^n)$, we obtain for the energy of the wave equation~(\ref{enwave})
$$ \frac{d}{dt}E(t)= -2\nu \varepsilon  \int_{\mathbb{R}^n}(\nabla u_t)^2(t,x)\dx\leq 0,$$
 what means that the energy $E(t)$  decreases in time, thanks to the viscosity term with $\nu>0$.
 The decrease rate is found for more regular energies in~\cite{Shibata}  in accordance with the regularity of the initial conditions.
 Without the term $\nabla u\nabla u_t$  (local non-linear effects), the Kuznetsov equation becomes similar to the Westervelt equation, initially derived by Westervelt~\cite{Westervelt} before Kuznetsov.
 The Westervelt equation, historically derived~\cite{Westervelt} for the acoustic pressure fluctuation, has the following form
 \begin{align}
& p_{tt}-c^2\Delta p - \nu \varepsilon \Delta p_t=\frac{\gamma+1}{c^2}\varepsilon p_t  p_{tt},\label{Westervelt}
\end{align}
and can also be seen as an approximation of an isentropic Navier-Stokes system.

In the sequel we  conveniently denote $p$ by $u$.
We multiply Eq.~(\ref{Westervelt}) by $u_t$ and integrate over $\mathbb{R}^n$ to obtain
$$\frac{1}{2}\frac{d}{dt}\left(\int_{\mathbb{R}^n} [(u_t)^2+c^2 (\nabla u)^2]\;\dx\right)+\nu \varepsilon \int_{\mathbb{R}^n} (\nabla u_t)^2\;\dx=\frac{1}{3} \frac{\gamma+1}{c^2}\varepsilon \frac{d}{dt}\left(\int_{\mathbb{R}^n} (u_t)^3\;\dx\right).$$
Then we have
$$\frac{1}{2}\frac{d}{dt}\left(\int_{\mathbb{R}^n}\left[\left(1-\frac{2}{3} \frac{\gamma+1}{c^2}\varepsilon u_t\right) (u_t)^2+c^2 (\nabla u)^2\right]\;\dx\right)+\nu \varepsilon \int_{\mathbb{R}^n} (\nabla u_t)^2\;\dx=0.$$
For $\alpha =\frac{2}{3} \frac{\gamma+1}{c^2}$ we consider the energy
\begin{equation}
 \label{EqE1}
 E_{nonl}(t)=\int_{\mathbb{R}^n}\left[\left(1-\alpha\varepsilon u_t\right) (u_t)^2+c^2 (\nabla u)^2\right]\;\dx,
\end{equation}
which is monotonous decreasing for $\nu>0$ and is conserved for $\nu=0$.
Let us also notice that, taking the same initial data for $\nu=0$ and $\nu>0$, we have:
$$\hbox{for all } \nu>0 \hbox{ and } t>0\quad E_{nonl}(t,\nu=0)> E_{nonl} (t,\nu)\geq 0,$$
in the assumption that $1-\alpha \varepsilon u_t\geq 0$ almost everywhere.

While
$\frac{1}{2}\leq 1-\alpha\varepsilon u_t\leq \frac{3}{2}$, that is to say $\Vert u_t(t)\Vert_{L^\infty(\mathbb{R}^n)}$ remains small enough in time, then we can compare $E_{nonl}$ to the  energy of the wave equation
$$\frac{1}{2}E(t)\leq E_{nonl}(t)\leq \frac{3}{2}E(t).$$
Then a sufficiently regular solution of the Cauchy problem for the Westervelt equation  has the energy $E$ controlled by a decreasing in time function:
$$
E(t)\leq 3 E(0)-4\nu \varepsilon \int_0^t \int_{\mathbb{R}^n} (\nabla u_t(\tau,x))^2\dx\;\dl.
$$

Now, let us consider the Kuznetsov equation~(\ref{kuz}). We multiply it by $u_t$ and integrate on $\mathbb{R}^n$ to obtain
$$\frac{1}{2}\frac{d}{dt}E_{nonl}(t)+\nu \varepsilon \int_{\mathbb{R}^n} (\nabla u_t)^2\;\dx=2\varepsilon \int_{\mathbb{R}^n} \nabla u\;\nabla u_t\; u_t\;\dx,$$
where $E_{nonl}(t)$ is given by Eq.~(\ref{EqE1}) with  $\alpha=\frac{2}{3} \frac{\gamma-1}{c^2}$.
As $$2\eps \int_{\mathbb{R}^n} \nabla u\;\nabla u_t\; u_t\;\dx=\eps\frac{d}{dt}\int_{\R^n}u_t(\nabla u)^2\;\dx-\eps\int_{\R^n}u_{tt}(\nabla u)^2\; \dx,$$
we find
\begin{multline}
 \frac{1}{2}\frac{d}{dt}\left(\int_{\mathbb{R}^n}\left[\left(1-\frac{2}{3} \frac{\gamma-1}{c^2}\varepsilon u_t\right) (u_t)^2+(c^2-2\eps u_t) (\nabla u)^2\right]\;\dx\right.\\+ \left. 2\eps\int^t_0\int_{\R^n}u_{tt}|\nabla u|^2\;\dx\;\dl\right)+ \nu \varepsilon \int_{\mathbb{R}^n} (\nabla u_t)^2\;\dx=0.\label{L2KuzEnFull}
\end{multline}
Thus, for $\alpha=\frac{2}{3} \frac{\gamma-1}{c^2}$, the function $$F_{\nu}(t)=\int_{\mathbb{R}^n}\left[\left(1-\alpha\varepsilon u_t\right) (u_t)^2+(c^2-2\eps u_t) (\nabla u)^2\right]\;\dx+ 2\eps\int^t_0\int_{\R^n}u_{tt}|\nabla u|^2\;\dx\;\dl$$
is constant if $\nu=0$ and decreases if $\nu>0$. Let us notice that while
$\frac{1}{2}\leq 1-\alpha\varepsilon u_t\leq \frac{3}{2}$, the coefficient $c^2-2\eps u_t$ is always positive (since $c$ is the sound speed in the chosen medium, $c^2\gg 1$), hence the first integral in $F_{\nu}(t)$ is positive, but we a priori don't know the sign of the second integral, $i.e.$ the sign of $u_{tt}$. However, for $\nu=0$, $F_{\nu=0}(t)$ is positive, as soon as $0\leq 1-\alpha \varepsilon u_1$:
$$F_{\nu=0}(t)=F_{\nu=0}(0)=\int_{\mathbb{R}^n}\left[\left(1-\alpha\varepsilon u_1\right) (u_1)^2+(c^2-2\eps u_1) (\nabla u_0)^2\right]\;\dx\ge0,$$
and, if we take the same initial data for the Cauchy problems with $\nu=0$ and $\nu>0$, for all $t> 0$ (for all time where $F_{\nu=0}$ exists)  it holds $F_{\nu=0}(t)=F_{\nu=0}(0)>F_{\nu>0}(t)$.

For $n\geq3$, we can control the term $2\varepsilon \int_{\mathbb{R}^n} \nabla u\nabla u_t u_t\;\dx$ using the H\"{o}lder inequality and the Sobolev embeddings (which fails in $\R^2$):
\begin{align*}
\left\vert \int_{\mathbb{R}^n} \nabla u\;\nabla u_t\; u_t\;\dx\right\vert\leq & \Vert\nabla u\Vert_{L^n}\Vert \nabla u_t\Vert_{L^2}\Vert u_t\Vert_{L^{\frac{2n}{n-2}}}
\leq  C \Vert\nabla u\Vert_{L^n}\Vert \nabla u_t\Vert_{L^2}^2.
\end{align*}
Indeed, in $\mathbb{R}^2$ we don't have any estimates of the form
$$\Vert u\Vert_{L^p(\mathbb{R}^2)}\leq \Vert \nabla u\Vert_{L^2(\mathbb{R}^2)}, $$
    with $p>2$. But  such an estimate is essential to control the nonlinear term.
 Then, instead of Eq.~(\ref{L2KuzEnFull}) for $F_{\nu}$, we have the relation for $E_{nonl}$:
$$\frac{1}{2}\frac{d}{dt}E_{nonl}(t)+(\nu \varepsilon-2\varepsilon C \Vert\nabla u\Vert_{L^n}) \int_{\mathbb{R}^n} (\nabla u_t)^2\;\dx\le 0.$$
So, if a solution of the Kuznetsov equation $u$ is such that $\Vert \nabla u(t)\Vert_{L^n}$ and $\Vert u_t(t)\Vert_{L^{\infty}}$ stay small enough for all time, then $E_{nonl}$ decreases in time and, as previously for the Westervelt equation, thanks to
$\frac{1}{2}E(t)\leq E_{nonl}(t)\leq \frac{3}{2}E(t)$, the energy $E$ has for  upper bound a decreasing function.

This fact leads us to look for global well-posedness results for the Cauchy problem for the Kuznetsov equation in the viscous case.

\section{Well-posedness for the inviscid case}\label{sec2}
\subsection{Proof of Point~3 of Theorem~\ref{ThMainWPnu0}}\label{SecKato}
Let us give an estimation of the maximum existence time for a solution of problem~(\ref{kuz})--(\ref{ci}) with $\nu=0$. For this we follow the work of John~\cite{John} with the use of a priori estimate. However we don't directly apply the general results of John, but we  improve them for our specific problem as we can take less regular initial conditions in order to have suitable a priori estimates.

\begin{proposition}~\label{PropApEstNV1}
For a fixed $m\in \mathbb{N}$ with $m\geq m_0=\left[\frac{n}{2}+2\right]$, let $u$ be a local solution of problem~(\ref{kuz})--(\ref{ci}) with $\nu=0$ on $[0,T]$ satisfying~(\ref{regkuznu0}) and~(\ref{conshyper}) for $s=m$.

For $t\in [0,T]$  we have for $E_{m}[u](t)$, defined in Eq.~(\ref{EnergEN}),
\begin{equation}\label{EqENfirstNV}
E_{m}[u](t)\leq B\; E_{m}[u](0)+C_m \max(\alpha,\beta)\varepsilon \int_{0}^{t}E_{m}[u](\tau)^{\frac{3}{2}} \dl,
\end{equation}
with constants $B=\frac{(3+2c^2)}{\min(1/2,c^2)}>0$, depending only on $c$, and $C_m>0$, depending only on $m$, on the dimension $n$ and on $c$ (only if $\min(1/2,c^2)=c^2$).
\end{proposition}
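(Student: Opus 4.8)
The plan is to follow the weighted-energy method for quasi-linear wave equations, rewriting Eq.~(\ref{kuz}) with $\nu=0$ in the non-divergence form
\begin{equation*}
(1-\alpha\eps u_t)\,u_{tt}-c^2\Delta u=\beta\eps\,\nabla u\cdot\nabla u_t,
\end{equation*}
so that the variable coefficient $a:=1-\alpha\eps u_t$ sits in front of $u_{tt}$. By the hyperbolicity assumption~(\ref{conshyper}) one has the two-sided bound $\tfrac12\le a\le\tfrac32$ on $[0,T]$, and this is what will ultimately produce the constant $B$. First I would apply an arbitrary space--time derivative $\del^\gamma$ with $|\gamma|\le m$ to this equation. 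Writing $v=\del^\gamma u$ and using the Leibniz rule, $v$ solves
\begin{equation*}
a\,\del_t^2 v-c^2\Delta v=R_\gamma:=\alpha\eps\!\!\sum_{0<\delta\le\gamma}\binom{\gamma}{\delta}\del^\delta u_t\,\del^{\gamma-\delta}\del_t^2 u+\beta\eps\,\del^\gamma(\nabla u\cdot\nabla u_t),
\end{equation*}
where the first sum collects the commutator $[\del^\gamma,a]\del_t^2 u$ and the second is the differentiated source.

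To each such $v$ I associate the weighted wave energy $\mathcal{E}_\gamma(t)=\int_{\R^n}[\,a\,(\del_t v)^2+c^2|\nabla v|^2\,]\,\dx$. Differentiating in time, substituting $a\,\del_t^2 v=c^2\Delta v+R_\gamma$ and integrating by parts the Laplacian term, the principal contributions cancel and one is left with
\begin{equation*}
\tfrac12\tfrac{d}{dt}\mathcal{E}_\gamma(t)=-\tfrac{\alpha\eps}{2}\int_{\R^n}u_{tt}\,(\del_t v)^2\,\dx+\int_{\R^n}R_\gamma\,\del_t v\,\dx .
\end{equation*}
The first term is immediately bounded by $\tfrac{\alpha\eps}{2}\|u_{tt}\|_{L^\infty}\|\del_t v\|_{L^2}^2$, and since $m-1>\tfrac n2$ (which follows from $m\ge m_0=[\tfrac n2+2]$) the embedding $H^{m-1}\hookrightarrow L^\infty$ gives $\|u_{tt}\|_{L^\infty}\lesssim E_m[u]^{1/2}$, so this contribution is $\lesssim\alpha\eps\,E_m[u]^{3/2}$.

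The key step is the estimate of $\int R_\gamma\,\del_t v\,\dx$. For the commutator sum, each summand is a product of two factors carrying together at most $|\gamma|\le m$ derivatives, which Moser-type (tame) product estimates, together with the embedding $H^{m-1}\hookrightarrow L^\infty$, bound in $L^2$ by $C_m E_m[u]^{1/2}$; pairing with $\|\del_t v\|_{L^2}\lesssim E_m[u]^{1/2}$ yields $\lesssim\alpha\eps\,E_m[u]^{3/2}$. The genuinely delicate term is the top-order part of $\beta\eps\,\del^\gamma(\nabla u\cdot\nabla u_t)$, namely $\beta\eps\,\nabla u\cdot\nabla(\del^\gamma u_t)$, because $\nabla u_t$ is only controlled in $H^{m-1}$ and a naive bound would lose one derivative. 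Here I would integrate by parts against $\del_t v=\del^\gamma u_t$:
\begin{equation*}
\int_{\R^n}\nabla u\cdot\nabla(\del^\gamma u_t)\,\del^\gamma u_t\,\dx=-\tfrac12\int_{\R^n}\Delta u\,(\del^\gamma u_t)^2\,\dx,
\end{equation*}
which is bounded by $\tfrac12\|\Delta u\|_{L^\infty}\|\del^\gamma u_t\|_{L^2}^2\lesssim E_m[u]^{3/2}$, again using $H^{m-1}\hookrightarrow L^\infty$. The remaining, lower-order terms of $\del^\gamma(\nabla u\cdot\nabla u_t)$ are handled directly by tame estimates, so that for each $\gamma$ one obtains $\tfrac{d}{dt}\mathcal{E}_\gamma(t)\le C\,\max(\alpha,\beta)\,\eps\,E_m[u](t)^{3/2}$.

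Finally I would sum over all $|\gamma|\le m$. The two-sided bound $\tfrac12\le a\le\tfrac32$ shows that $\sum_{|\gamma|\le m}\mathcal{E}_\gamma$ is equivalent to $E_m[u]$: it is bounded below by $\min(\tfrac12,c^2)\,E_m[u]$, while at $t=0$ it is bounded above by a multiple of $E_m[u](0)$ whose ratio to $\min(\tfrac12,c^2)$ is exactly $B=\tfrac{3+2c^2}{\min(1/2,c^2)}$. Integrating the summed differential inequality in time and dividing by $\min(\tfrac12,c^2)$ then yields~(\ref{EqENfirstNV}), with $C_m$ depending only on $m$, $n$ and $c$. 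The main obstacle is precisely the top-order loss-of-derivative in the local nonlinearity $\nabla u\cdot\nabla u_t$, which is resolved by the integration-by-parts identity above; the secondary, more bookkeeping difficulty is to verify that every commutator and product genuinely closes at the level $E_m[u]^{3/2}$ at the minimal regularity $m_0=[\tfrac n2+2]$, which hinges throughout on the single embedding $H^{m-1}\hookrightarrow L^\infty$.
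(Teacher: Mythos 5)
Your proposal is correct and follows essentially the same route as the paper: your weighted energy $\int_{\R^n}\bigl[a\,(\del_t v)^2+c^2|\nabla v|^2\bigr]\dx$ is exactly the paper's functional $\int I[D^Au]\,\dx$, your integration by parts $\int_{\R^n}\nabla u\cdot\nabla(\del^\gamma u_t)\,\del^\gamma u_t\,\dx=-\tfrac12\int_{\R^n}\Delta u\,(\del^\gamma u_t)^2\dx$ is precisely how the paper produces the $\beta\eps\,\Delta u\,(v_t)^2$ term in $J[v]$, and your Moser-type bounds on the commutator terms correspond to the paper's explicit H\"older--Sobolev case analysis with exponents $H^{m_1}\hookrightarrow L^p$, ending with the same energy-equivalence argument that yields $B=\tfrac{3+2c^2}{\min(1/2,c^2)}$. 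One bookkeeping slip: each commutator summand is bounded in $L^2$ by $C_m E_m[u]$ (one factor of $E_m[u]^{1/2}$ per factor of the product), not by $C_m E_m[u]^{1/2}$, and it is this bound that your pairing with $\Vert\del_t v\Vert_{L^2}\lesssim E_m[u]^{1/2}$ actually uses to reach $E_m[u]^{3/2}$.
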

\begin{proof}
The proof is given in Appendix~\ref{Appen1}.
\end{proof}

Inequality~(\ref{EqENfirstNV}), proved in Proposition~\ref{PropApEstNV1}, gives us an a priori estimate in order to have, with the help of the Gronwall Lemma, an estimation of the maximum existence time $T^*$.  However,  when $m$ increases, $C_m$ increases, and the maximum existence time, given by estimate~(\ref{EqENfirstNV}),  decreases whereas the initial conditions become more regular. Therefore,
we prove the second a priori estimate (see Eq.~(\ref{EqEnSecondNV})), playing a key role in order to avoid this problem:

\begin{proposition}~\label{PropApEstNV2}
Let conditions of Proposition~\ref{PropApEstNV1} be satisfied. Then for $t\in [0,T]$ and $m\geq \left[\frac{n}{2}+3\right]$ we have
\begin{equation}\label{EqEnSecondNV}
E_{m}[u](t)\leq B\; E_{m}[u](0)+D_m\max(\alpha,\beta) \varepsilon \int_{0}^{t}E_{m-1}[u](\tau)^{\frac{1}{2}}E_{m}[u](\tau) \dl,
\end{equation}
with a constant $D_m>0$, depending only on $m$, on $n$ and on $c$ and the same constant $B$ as in Proposition~\ref{PropApEstNV1}.
\end{proposition}
The proof  of Eq.~(\ref{EqEnSecondNV}) is very similar to the proof of Proposition~\ref{PropApEstNV1} given in Appendix~\ref{Appen1} and hence omitted (see Remark~\ref{RemAprop2} in  Appendix~\ref{Appen1}).

Now let us give a first estimation of the lifespan $T^*$ of a local solution of problem~(\ref{kuz})--(\ref{ci}) with $\nu=0$.
\begin{theorem}\label{ThTEt1}
Let $m\geq m_0=\left[\frac{n}{2}+2\right]$ and let $u$ be the unique solution on $[0,T^*[$ of  problem~(\ref{kuz})--(\ref{ci}) with $\nu=0$ for
$$u_0\in H^{m+1}(\mathbb{R}^n),\quad u_1\in H^m(\mathbb{R}^n)\quad \hbox{and}\quad \Vert u_1\Vert_{L^{\infty}(\mathbb{R}^n)}<\frac{1}{2\alpha\varepsilon}.$$
If $\sqrt{E_{m_0}[u](0)}\leq\frac{1}{4 \sqrt{B} C_{\infty}\alpha\varepsilon}$,
then
\begin{equation}\label{EqT0}
  T^*>T_0=\frac{1}{C_{m_0}\max(\alpha,\beta)\varepsilon\sqrt{B}E_{m_0}[u](0)}
\end{equation}
 and
$$u\in C^r([0,T_0];H^{m+1-r})\;\text{for}\;0\leq r\leq m+1,$$
with
$$\forall t\in[0,T_0],\;\;E_m[u](t)\leq C<+\infty.$$
Here $B$ and $C_{m_0}$ are the constants from estimate~(\ref{EqENfirstNV}) and $C_{\infty}$ is the embedding constant from the embedding of the Sobolev space $H^{[\frac{n}{2}+1]}(\mathbb{R}^n)$ in $L^{\infty}(\mathbb{R}^n)$.
\end{theorem}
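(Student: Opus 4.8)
The plan is to run a continuation argument. First I would invoke the local well-posedness of Point~1 of Theorem~\ref{ThMainWPnu0} (with $s=m$) to produce a maximal existence time $T^*$ together with the regularity~(\ref{regkuznu0}), and recall the attached continuation criterion: the solution extends past any time up to which $E_m[u]$ stays finite and the hyperbolicity~(\ref{conshyper}), i.e. the strict positivity of $1-\alpha\eps u_t$, is preserved. It then suffices to show that on $[0,\min(T^*,T_0))$ the energy $E_{m_0}[u]$ stays controlled by a fixed multiple of $E_{m_0}[u](0)$ and that $\|u_t\|_{L^\infty}$ remains below $\frac{1}{2\alpha\eps}$; maximality of $T^*$ then forces $T^*>T_0$.

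For the energy control I would feed the a priori estimate~(\ref{EqENfirstNV}) of Proposition~\ref{PropApEstNV1} at the level $m=m_0$ into a Bihari (nonlinear Gronwall) comparison. Writing $y(t)=E_{m_0}[u](t)$ and $K=C_{m_0}\max(\alpha,\beta)\eps$, inequality~(\ref{EqENfirstNV}) reads $y(t)\le By(0)+K\int_0^t y(\tau)^{3/2}\dl$, whose majorant solves $\phi'=K\phi^{3/2}$, $\phi(0)=By(0)$, explicitly
\begin{equation*}
\phi(t)=\frac{By(0)}{\left(1-\tfrac{1}{2}K\sqrt{By(0)}\,t\right)^{2}}.
\end{equation*}
Evaluating at $t=T_0$ and using the definition $T_0=(K\sqrt{B}\,E_{m_0}[u](0))^{-1}$, I expect the factor $1-\tfrac12 K\sqrt{By(0)}\,T_0$ to be bounded away from $0$, so that $y(t)\le\phi(t)\le \Lambda\,E_{m_0}[u](0)$ on $[0,T_0]$ for an explicit constant $\Lambda$. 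The cleanest rigorous formulation is a continuity/bootstrap version: assume $y\le\Lambda y(0)$ on a maximal subinterval, reinsert this into~(\ref{EqENfirstNV}), and recover a \emph{strict} inequality, which opens the subinterval and hence exhausts $[0,T_0]$.

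The decisive step is the preservation of hyperbolicity, and this is exactly what the smallness hypothesis $\sqrt{E_{m_0}[u](0)}\le(4\sqrt{B}C_{\infty}\alpha\eps)^{-1}$ is calibrated for. Since $m_0=[\frac n2+2]$, the energy~(\ref{EnergEN}) dominates $\|u_t\|_{H^{[\frac n2+1]}}^2$, so the embedding $H^{[\frac n2+1]}(\R^n)\hookrightarrow L^\infty(\R^n)$ gives $\|u_t(t)\|_{L^\infty}\le C_\infty\sqrt{E_{m_0}[u](t)}\le C_\infty\sqrt{\Lambda\,E_{m_0}[u](0)}$; choosing $\Lambda$ compatible with the numerical factor $4$ in the smallness condition then yields $\|u_t(t)\|_{L^\infty}<\frac{1}{2\alpha\eps}$ strictly on all of $[0,T_0]$. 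Together with the energy bound this meets the continuation criterion, so $T^*>T_0$.

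Finally, to upgrade from the $m_0$-level control to the full statement $E_m[u](t)\le C$ and the regularity $u\in C^r([0,T_0];H^{m+1-r})$, I would not reuse~(\ref{EqENfirstNV}), whose cubic right-hand side degrades the lifespan as $m$ grows, but rather the second estimate~(\ref{EqEnSecondNV}) of Proposition~\ref{PropApEstNV2}, whose right-hand side is \emph{linear} in $E_m[u]$ with coefficient $E_{m-1}[u]^{1/2}$. Once hyperbolicity and the lower-order bounds are secured on $[0,T_0]$, a descending induction on the order lets me treat $E_{m-1}[u]^{1/2}$ as a bounded factor, so a linear Gronwall inequality gives $E_m[u](t)\le B\,E_m[u](0)\exp\!\big(D_m\max(\alpha,\beta)\eps\int_0^t E_{m-1}[u]^{1/2}\dl\big)\le C<+\infty$ on $[0,T_0]$, and the regularity in $t$ follows by reading the higher time derivatives off the equation~(\ref{kuz}). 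The main obstacle is the comparison step of the second paragraph: making the Bihari argument close with constants consistent with both the explicit $T_0$ and the sharp threshold in the smallness condition, so that the energy provably cannot reach the hyperbolicity-breaking level $\|u_t\|_{L^\infty}=\frac{1}{2\alpha\eps}$ before time $T_0$.
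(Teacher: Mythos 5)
Your proposal follows essentially the same route as the paper's own proof: the a priori estimate~(\ref{EqENfirstNV}) at level $m_0$ combined with the explicit solution of the comparison ODE $\phi'=K\phi^{3/2}$ (giving $E_{m_0}[u](t)\le 4B\,E_{m_0}[u](0)$ on $[0,T_0]$), the Sobolev embedding $H^{[\frac n2+1]}\hookrightarrow L^\infty$ together with the smallness hypothesis, calibrated exactly so that $\Lambda=4B$ keeps $\Vert u_t\Vert_{L^\infty}<\frac{1}{2\alpha\varepsilon}$ on $[0,T_0]$, and then Gronwall applied to the linear-in-$E_{j+1}$ estimate~(\ref{EqEnSecondNV}) to propagate finiteness of the higher energies. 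The only cosmetic difference is that the paper runs the induction upward from $j=m_0$ to $m$ (your ``descending'' recursion read in the opposite direction), so your argument and the paper's coincide in substance.
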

\begin{proof}
 Thanks to Point~1 of Theorem~\ref{ThMainWPnu0}, for $u_0\in H^{m+1}(\mathbb{R}^n)$, $u_1\in H^m(\mathbb{R}^n)$ and $\Vert u_1\Vert_{L^{\infty}(\mathbb{R}^n)}<\frac{1}{2\alpha\varepsilon}$ there exists a unique  solution $u$ on an sufficiently small interval $[0,T]$ of problem~(\ref{kuz})--(\ref{ci}) with $\nu=0$, satisfying~(\ref{regkuznu0}) and~(\ref{conshyper}) for $s=m$. Moreover it implies that $E_m[u](0)$ is finite. Hence, we can add the hypothesis $$\sqrt{E_{m_0}[u](0)}\leq\frac{1}{4 \sqrt{B} C_{\infty}\alpha\varepsilon}$$ without adding further conditions of regularity on $u_0$ and $u_1$ as it can be reduced on a smallness condition on $\Vert u_0\Vert_{H^{m+1}}+\Vert u_1\Vert_{H^m}$.

 Let us take $T_0$, as defined in Eq.~(\ref{EqT0}), and show by induction on $j\in \mathbb{N}$ with $m_0\leq j\leq m$ that  $$\forall j\in \mathbb{N},\text{ with } m_0\leq j\leq m \quad \sup_{t\in [0,T_0]}E_j[u](t)<\infty.$$
For $j=m_0$, $u_0\in H^{m+1}(\mathbb{R}^n)\subseteq H^{m_0+1}(\mathbb{R}^n) $ and $u_1\in H^m(\mathbb{R}^n)\subseteq H^{m_0}(\mathbb{R}^n)$, and consequently $$E_{m_0}[u](0)\leq E_m [u](0) <\infty.$$

For $t\geq 0$, while  $\Vert u_t(t)\Vert_{L^{\infty}(\mathbb{R}^n)}\leq \frac{1 }{2 \alpha\varepsilon}$, it holds estimate~(\ref{EqENfirstNV}) with $m=m_0$.
According to the Gronwall Lemma, applied to~(\ref{EqENfirstNV}) with $m=m_0$, we have
$$E_{m_0}[u](t)\leq z(t),$$
where $z(t)$ is the solution of the Cauchy problem for an ordinary differential equation
$$z(t)=z_0+C_{m_0}\max(\alpha,\beta) \varepsilon \int_0^t(z(\tau))^{3/2}\dl \quad \hbox{ with } z_0=B\;E_{m_0}[u](0).$$
This problem  can be solved explicitly:
$$z(t)=\frac{z_0}{(1-\frac{1}{2}z_0^{1/2}C_{m_0} \max(\alpha,\beta)\varepsilon t)^2}.$$
We can  see that, as long as $0\leq t\leq T_0,$
the function $z(t)$ has the finite upper bound
$z(t)\leq 4 z_0.$ It implies the upper boundness of $E_{m_0}[u]$:
\begin{equation}\label{EstBEN0}
     E_{m_0}[u](t)\leq 4 B\; E_{m_0}[u](0).
  \end{equation}
Moreover, thanks to our notations,
$$\frac{\Vert u_t(t)\Vert_{L^{\infty}(\mathbb{R}^n)}}{C_{\infty}}\leq \Vert u_t(t)\Vert_{H^{[\frac{n}{2}+1]}}\leq \sqrt{ E_{m_0}[u](t)},$$
from where, using inequality~(\ref{EstBEN0}), we find
$$\Vert u_t(t)\Vert_{L^{\infty}(\mathbb{R}^n)}\leq 2  C_{\infty} \sqrt{B\; E_{m_0}[u](0)}\leq \frac{1}{2\alpha\varepsilon},$$
since $\sqrt{E_{m_0}[u](0)}\leq\frac{1}{4 \sqrt{B} C_{\infty}\alpha\varepsilon}$.
Thus Eq.~(\ref{conshyper}) holds on all interval $[0,T_0]$  and $\sup_{t\in [0,T_0]}E_{m_0}[u](t)$ is finite.

Let $j\in \mathbb{N}$, $m_0\leq j\leq m-1$ be such that $\sup_{t\in [0,T_0]}E_{j}[u](t)<\infty$.

Since Eq.~(\ref{conshyper}) holds on all interval $[0,T_0]$,  we can use the a priori estimate~(\ref{EqEnSecondNV})
and write that for all $t \in [0,T_0]$
$$E_{j+1}[u](t)\leq B\; E_{j+1}[u](0)+D_{j+1}\max(\alpha,\beta)\varepsilon \int_{0}^{t}\sqrt{E_{j}[u](\tau)}E_{j+1}[u](\tau) \dl.$$
By the induction hypothesis $\sup_{t\in [0,T_0]}E_{j}[u](t)$ is bounded by a constant, denoted here by $E^2$, and hence on $[0,T_0]$ it holds
$$E_{j+1}[u](t)\leq B E_{j+1}[u](0) +D_{j+1}\max(\alpha,\beta)\; E\varepsilon \int_0^t E_{j+1}[u](\tau) \dl.$$
Applying the Gronwall Lemma, we obtain for $t\in [0,T_0]$
$$E_{j+1}[u](t) \leq B E_{j+1}[u](0) e^{D_{j+1}\max(\alpha,\beta)\; E\varepsilon t}\leq B E_{j+1}[u](0) e^{D_{j+1}\max(\alpha,\beta)\; E\varepsilon T_0}.$$
This means, as $E_{j+1}[u](0)\leq E_m[u](0)<+\infty$, that  $\sup_{t\in [0,T_0]}E_{j+1}[u](t)<\infty$ and this finishes the proof.
\end{proof}

Theorem~\ref{ThTEt1} estimates the lifespan $T^*$ as  at least of the order $\frac{1}{\varepsilon}$, or more precisely, implies that
$$\liminf_{\varepsilon\rightarrow 0}\varepsilon T^* >0.$$
This result is independent on the dimension $n$. However, much better estimations for the lifespan can be obtained, if we use an inequality that takes into account the time decay of the solutions for $n>1$, what we do in the next section.

\subsection{Proof of Point~5 of Theorem~\ref{ThMainWPnu0}. Optimal estimations of the existence time}\label{SecJohnn4}

In~\cite{John} John uses the group of linear transformations preserving the equation $u_{tt}-\Delta u=0$. The generators of this group (the derivatives with respect to group parameters taken at the identity), here called generalized derivatives, include in addition to the derivatives $\partial_t,\partial_{x_1},\ldots,\partial_{x_n}$, first-order differential operators $L_{\alpha}$ with $\alpha=0,\ldots,n$ and $\Omega_{ik}$ with $1\leq i<k\leq n$: 
\begin{definition}\textbf{(Generalized derivatives~\cite{John})}\label{DefGenDer}
The following operators
\begin{align*}
 &L_0=t\partial_t+\sum_i x_i \partial_{x_i}, \quad L_i=x_i\partial_t+t\partial_{x_i} \hbox{  for } i=1,...,n,\\
 &\Omega_{ik}=x_i\partial_{x_k}-x_k\partial_{x_i} \hbox{ for } 1\leq i<k\leq n,\hbox{ and }\;\partial_t, \; \partial_{x_i} \hbox{ for }i=1,...,n
\end{align*}
are called the generalized derivatives.
The operators
$$L_0,\ldots,L_n,\Omega_{12},\Omega_{13},\ldots,\Omega_{n-1n},\partial_t,\partial_{x_1},\ldots,\partial_{x_n},$$
(taken in this order) are denoted respectively by $\Gamma_0,\ldots,\Gamma_{\mu}$ with $\mu=\frac{1}{2}(n^2+3n+2)$. For a multi-index $A=(A_0,\ldots,A_{\mu})$ we write in the usual way
$$\vert A\vert=A_0+\ldots+A_{\mu}, \quad \Gamma^A=(\Gamma_0)^{A_0}(\Gamma_1)^{A_1}\ldots(\Gamma_{\mu})^{A_{\mu}}.$$
\end{definition}
Therefore, in the framework of the general derivatives, we  define 
for $m\in\mathbb{N}$
\begin{align}
 &E_{\infty,m}[u](t)=\sup_x\left\vert \sup_{\vert A\vert\leq m} \left[(\Gamma^A\partial_t u(t,x))^2+ (\Gamma^A\nabla u(x,t))^2\right]\right\vert,\label{EnGenInf}\\
 &E_{1,m}[u](t)=\sum_{\vert A\vert\leq m}(\Vert \Gamma^A \partial_t u\Vert_{L^2(\mathbb{R}^n)}^2+\Vert \Gamma^A\nabla u\Vert_{L^2(\mathbb{R}^n)}^2)(t).\label{EnGen1N}
\end{align}
Let us give a remarkable estimate proved in Ref.~\cite{Klainerman2} by Klainerman:
\begin{proposition}\textbf{(Klainerman 1987)}\label{PropKlainerman}
For $n^*=[\frac{n}{2}+1]$, $m\in \mathbb{N}$, and $t>0$, as soon as $u$ is such that $E_{1,m+n^*}[u](t)$ is finite, it holds
\begin{align}\label{EstKlaire}
\sqrt{E_{\infty,m}[u](t)}\leq C_n (1+t)^{\frac{1-n}{2}}\sqrt{E_{1,m+n^*}[u](t)}.
\end{align}
\end{proposition}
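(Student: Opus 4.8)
Estimate~(\ref{EstKlaire}) is the Klainerman--Sobolev weighted $L^\infty$--$L^2$ inequality, and the plan is to derive it from the \emph{sharp} global Sobolev inequality associated with the generalized derivatives,
\[
(1+t+|x|)^{n-1}(1+|t-|x||)\,|v(t,x)|^2\le C_n\sum_{|A|\le n^*}\|\Gamma^A v(t,\cdot)\|^2_{L^2(\mathbb{R}^n)},
\]
applied to each scalar component $v\in\{\partial_t u,\partial_{x_1}u,\dots,\partial_{x_n}u\}$. The first step is to reduce the statement to this single-function estimate. Since $E_{\infty,m}[u](t)$ is a supremum over $x$ and over $|A|\le m$ of the squared $\Gamma^A$-derivatives of $\partial_t u$ and $\nabla u$, it suffices to bound $\|\Gamma^A v(t,\cdot)\|_{L^\infty}$ for each such component and each $|A|\le m$. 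The operators $\Gamma_0,\dots,\Gamma_\mu$ span the Lie algebra generated by the Poincar\'e group and the scaling $L_0$, so every commutator $[\Gamma_i,\Gamma_j]$ is a constant-coefficient combination of the $\Gamma_k$; hence $\Gamma^B(\Gamma^A v)$ is a finite linear combination of $\Gamma^C v$ with $|C|\le|A|+|B|$. Applying the single-function inequality to $\Gamma^A v$ with $|B|\le n^*$ therefore produces only terms $\|\Gamma^C v\|_{L^2}$ with $|C|\le m+n^*$, whose sum is controlled by $E_{1,m+n^*}[u](t)$. The weak weight $(1+t)^{1-n}$ of~(\ref{EstKlaire}) then follows from the trivial majorizations $(1+t+|x|)^{n-1}\ge(1+t)^{n-1}$ and $(1+|t-|x||)\ge1$.

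The heart of the matter is the sharp inequality, which I would prove by a rescaled Sobolev embedding. The algebraic starting point is that the generalized derivatives can be inverted to recover the ordinary ones: a direct computation gives $(t^2-|x|^2)\partial_t=tL_0-\sum_i x_iL_i$, together with analogous identities expressing $(t^2-|x|^2)\partial_{x_j}$ through $L_0$, the boosts $L_i$ and the rotations $\Omega_{ik}$. These yield, at every point off the light cone, a pointwise control of $(t+|x|)\,|t-|x||\,|\partial_{t,x}v|$ by $\sum_{|A|\le1}|\Gamma^A v|$, which iterates to higher orders. One then fixes $(t,x_0)$ and performs a Sobolev embedding $H^{n^*}(B(0,1))\hookrightarrow L^\infty(B(0,1))$, legitimate precisely because $n^*=[\tfrac n2+1]>\tfrac n2$, after rescaling $v$ to the unit ball. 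Away from the cone (the whole interior $|x_0|\le t$, where the boosts are non-degenerate, and the far exterior $|x_0|\ge2t$) one rescales isotropically on a ball of radius $\sim(1+t+|x_0|)/C_0$; the pointwise bounds convert rescaled derivatives into $\Gamma$-derivatives and the Jacobian of the rescaling supplies a weight $(1+t+|x_0|)^{-n}$, which already dominates the required $(1+t)^{1-n}$.

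The main obstacle is the degeneracy on the light cone $\{t=|x|\}$, where $(t^2-|x|^2)$ vanishes and the isotropic argument breaks down. In the near-cone exterior strip $t<|x_0|<2t$ the radial derivative is controlled only on the scale $d:=1+|t-|x_0||$, while the angular derivatives, being essentially the rotations $\Omega_{ik}$, are controlled on the much larger scale $\lambda:=1+t+|x_0|$. The resolution is an \emph{anisotropic} rescaling on a box of radial extent $\sim d$ and tangential extent $\sim\lambda$, whose volume $\sim d\,\lambda^{n-1}$ produces, after the Sobolev embedding, exactly the sharp weight $\lambda^{n-1}d=(1+t+|x|)^{n-1}(1+|t-|x||)$. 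Summing the resulting local $L^2$ masses over a locally finite (bounded-overlap) cover of $\mathbb{R}^n$ adapted to the distance to the cone gives the global inequality. The delicate points are thus the correct choice of anisotropic boxes near the cone and the bookkeeping of the powers of $\lambda$ and $d$ so that the local contributions assemble without loss; for the weak form~(\ref{EstKlaire}) only the $\lambda$-power must be tracked, but the near-cone boxes are still needed to cover space with bounded overlap. Finally, the region $\lambda\sim1$ (small $t$ near the origin) is trivial, reducing to the ordinary embedding $H^{n^*}(\mathbb{R}^n)\hookrightarrow L^\infty(\mathbb{R}^n)$.
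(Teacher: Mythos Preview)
The paper does not prove this proposition at all: it is stated as a quotation of Klainerman's result, with the line ``Let us give a remarkable estimate proved in Ref.~\cite{Klainerman2} by Klainerman'' preceding the statement and no proof following it. There is therefore nothing in the paper against which to compare your argument.

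That said, your sketch is a faithful outline of the standard proof of the Klainerman--Sobolev inequality. The reduction to a single scalar $v=\Gamma^A(\partial u)$ via the commutator algebra of the $\Gamma_i$ is correct, and the core mechanism --- a rescaled Sobolev embedding on balls (or anisotropic boxes near the cone) whose size is dictated by the pointwise identities expressing $(t^2-|x|^2)\partial$ in terms of the vector fields $L_0,L_i,\Omega_{ik}$ --- is exactly how the inequality is obtained in Klainerman's work and in the subsequent literature. One small caution: your identity $(t^2-|x|^2)\partial_t=tL_0-\sum_i x_iL_i$ and its spatial analogues only control the \emph{first} derivatives of $v$ pointwise by $\sum_{|A|\le1}|\Gamma^A v|$; to reach the Sobolev threshold $n^*$ derivatives you need either to iterate these identities carefully (tracking the commutators that appear) or, more cleanly, to apply the Sobolev embedding directly on the rescaled ball and then convert each ordinary derivative in the chain rule into a $\Gamma$-derivative one at a time. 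Your text gestures at ``iterates to higher orders'' without saying which route you take, and in a full proof this is where most of the bookkeeping lives.
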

Thanks to Proposition~\ref{PropKlainerman}, we improve the results of John~\cite{John} for the case of the Kuznetsov equation  and  state:
\begin{proposition}\label{Propglobapenu0}
For $n$ and $m$ in $\mathbb{N}^*$, $m\geq n+2$, let $u$ be a local solution on an interval $[0,T]$ of  problem~(\ref{kuz})--(\ref{ci}) with $\nu=0$, satisfying~(\ref{regkuznu0}) and~(\ref{conshyper}) with $s=m$. Then for all  $t\in[0,T]$, it holds
\begin{equation}\label{EstE1Nnu0}
E_{1,m}[u](t)\leq B\; E_{1,m}[u](0)+C_m\max(\alpha,\beta)\varepsilon \int_{0}^{t}(1+l)^{(1-n)/2}\;E_{1,m}[u](\tau)^{\frac{3}{2}} \dl,
\end{equation}
with a positive constant $B>0$, depending only on $c$, on $\alpha$ and on $\beta$, and with a positive constant $C_m>0$, depending only on $m$, on $n$ and on $c$.
\end{proposition}
\begin{proof} The proof follows identically the proof of Proposition~\ref{PropApEstNV1} up to Eq.~(\ref{LuDAu}) replacing everywhere $D^A$ by $\Gamma^A$.
This time Eq.~(\ref{LuDAu}) becomes
\begin{equation}
L_u\Gamma^Au= \varepsilon \sum_{j=0}^\mu \left(\alpha C_j \Gamma^{A^{j1}}u_t\;\Gamma^{A^{j2}}u_t+\sum_{i=1}^n \beta E_{ij} \Gamma^{A^{j1}}\partial_{x_i}u\;\Gamma^{A^{j2}} \partial_{x_i}u\right),
\end{equation}
where $\mu$ is defined in Definition~\ref{DefGenDer},  $C_j$ and $E_{ij}$ depend only on $\vert A\vert\leq m$, and $A^{j1}$ and $A^{j2}$ are multi-indexes, such that
$$\vert A^{j1}\vert +\vert A^{j2}\vert\leq m+1.$$
It follows that $\vert A^{j1}\vert\leq [\frac{m+1}{2}]$ or $\vert A^{j2}\vert\leq [\frac{m+1}{2}]$.
Therefore, if we set $m'=\left[\frac{m+1}{2}\right]$, we obtain
\begin{align*}
\vert J[\Gamma^Au](\tau,x)\vert\leq & C_m \max(\alpha,\beta)\varepsilon \sqrt{\sup_{\vert B\vert\leq m'} \big((\Gamma^B\partial_t u(\tau,x))^2+ (\Gamma^B\nabla u(\tau,x))^2)\big) }\cdot \\
& \cdot\sup_{\vert B\vert\leq m} \big((\Gamma^B\partial_t u(\tau,x))^2+ (\Gamma^B\nabla u(\tau,x))^2)\big)\\
\leq & C_m \max(\alpha,\beta)\varepsilon \sqrt{E_{\infty,m'}[u](\tau)} \sum_{\vert B\vert\leq m} \big((\Gamma^B\partial_t u(\tau,x))^2+ (\Gamma^B\nabla u(\tau,x))^2)\big),
\end{align*}
and thus
$$\left\vert \int_{\mathbb{R}^n}J[\Gamma^Au](\tau,x)\dx\right\vert \leq C_m\max(\alpha,\beta)\varepsilon \sqrt{E_{\infty,m'}[u](\tau)} E_{1,m}[u](\tau).$$
By hypothesis on $u$, $$\Vert u_t(t)\Vert_{L^{\infty}(\mathbb{R}^n)}\leq \frac{1}{2\alpha\varepsilon} \hbox{ on }[0,T],$$ and then, by integrating of Eq.~(\ref{relIJ})
 on $[0,t]$ with $t\in[0,T]$, we have
\begin{align*}
\frac{1}{2}\Vert\partial_t \Gamma^A u(t)\Vert_{L^2(\mathbb{R}^n)}^2+c^2\Vert \nabla \Gamma^A  u(t)\Vert_{L^2(\mathbb{R}^n)}^2\leq& \frac{3}{2} \Vert\partial_t \Gamma^A u(0)\Vert_{L^2(\mathbb{R}^n)}^2 +c^2 \Vert \nabla \Gamma^A  u(0)\Vert_{L^2(\mathbb{R}^n)}^2\\
&+ C_m \max(\alpha,\beta) \varepsilon \int_0^t \sqrt{E_{\infty,m'}[u](\tau)} E_{1,m}[u](\tau)\dl.
\end{align*}
By summing for $\vert A\vert\leq m$, we obtain
$$E_{1,m}[u](t)\leq B\; E_{1,m}[u](0)+C_m\max(\alpha,\beta)\varepsilon \int_{0}^{t}\sqrt{E_{\infty,m'}[u](\tau)} E_{1,m}[u](\tau) \dl.$$
Now we use the Klainerman  inequality~(\ref{EstKlaire}), noticing that, if we take $m\geq n+2$, we have
$$m'+n^*=\left[\frac{m+1}{2}\right]+\left[\frac{n}{2}+1\right]\leq m.$$
This finishes the proof.
\end{proof}
We  use the a priori estimate~(\ref{EstE1Nnu0})
to improve our estimation of the lifespan $T^*$ as a function of $n$.
\begin{theorem}\label{ThTimeExistNu0}
Let $m\geq n+2$. For $u_0\in H^{m+1}(\mathbb{R}^n)$ and $u_1\in H^m(\mathbb{R}^n)$ with $\Vert u_1\Vert_{L^{\infty}(\mathbb{R}^n)}\leq \frac{1}{2\alpha\varepsilon}$ we consider the local solution $u$ of problem~(\ref{kuz})--(\ref{ci}) with $\nu=0$ on an interval $[0,T]$, satisfying~(\ref{regkuznu0}) and~(\ref{conshyper}) for $s=m$ as in Point~1 of Theorem~\ref{ThMainWPnu0}. If $\sqrt{E_{1,m}[u](0)}\leq \frac{1}{4 \sqrt{B} C_{\infty}\alpha\varepsilon}$, then
$$E_{1,m}[u](t)\leq 4B\;E_{1,m}[u](0),$$ as long as
$$t\leq \Big(2C_m\max(\alpha,\beta)\varepsilon \sqrt{B\;E_{1,m}[u](0)}\Big)^{-2}\;\;\;\text{for}\;n=2,$$
$$t\leq 2\exp \Big(\frac{1}{C_m\max(\alpha,\beta)\varepsilon \sqrt{B\;E_{1,m}[u](0)}}\Big)\;\;\;\text{for}\;n=3,$$
$$1\leq\Big(2C_m\max(\alpha,\beta) \varepsilon \sqrt{B\;E_{1,m}[u](0)}\Big)^{-1} \;\;\;\text{for}\;n\geq 4.$$
Consequently,
$$\liminf_{\varepsilon\rightarrow 0} \varepsilon^2 T^* >0 \;\;\;\text{for}\;n=2,$$
$$\liminf_{\varepsilon\rightarrow 0} \varepsilon \log(T^*)>0\;\;\;\text{for}\;n=3,$$
and, for a small enough $\varepsilon$,  $T^*=+\infty \;\;\;\text{for}\; n\geq 4$, $i.e.$ the solution $u$ is  global. 
\end{theorem}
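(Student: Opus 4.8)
The plan is to run a continuity (bootstrap) argument on the scalar quantity $y(t)=E_{1,m}[u](t)$, comparing it to the solution of the ordinary differential equation obtained by saturating the a priori estimate~(\ref{EstE1Nnu0}). Writing $K=C_m\max(\alpha,\beta)\varepsilon$ and $z_0=B\,E_{1,m}[u](0)$, the estimate~(\ref{EstE1Nnu0}) reads
$$y(t)\le z_0+K\int_0^t(1+\tau)^{\frac{1-n}{2}}\,y(\tau)^{3/2}\,\dl .$$
Since the right-hand side is nondecreasing in $y$, a standard nonlinear Gronwall/comparison lemma gives $y(t)\le z(t)$, where $z$ solves the corresponding equation with equality, i.e.
$$z'(t)=K(1+t)^{\frac{1-n}{2}}\,z(t)^{3/2},\qquad z(0)=z_0 .$$

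First I would integrate this separable ODE. With $\Phi_n(t):=\int_0^t(1+\tau)^{\frac{1-n}{2}}\,\dl$ one gets $z(t)^{-1/2}=z_0^{-1/2}-\tfrac{K}{2}\Phi_n(t)$, hence
$$z(t)=\frac{z_0}{\left(1-\tfrac{K}{2}\sqrt{z_0}\,\Phi_n(t)\right)^{2}} .$$
Consequently, as long as $\tfrac{K}{2}\sqrt{z_0}\,\Phi_n(t)\le\tfrac12$, the denominator stays $\ge\tfrac12$ and therefore $E_{1,m}[u](t)\le z(t)\le 4z_0=4B\,E_{1,m}[u](0)$, which is the asserted bound. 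The whole dimensional dependence is thus encoded in the growth of $\Phi_n$, and the threshold is $\Phi_n(t)\le R:=\big(K\sqrt{z_0}\big)^{-1}=\big(C_m\max(\alpha,\beta)\varepsilon\sqrt{B\,E_{1,m}[u](0)}\big)^{-1}$.

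Next I would evaluate $\Phi_n$ in each dimension. For $n=2$, $\Phi_2(t)=2(\sqrt{1+t}-1)\le 2\sqrt t$, so $\Phi_2(t)\le R$ is guaranteed by $t\le (R/2)^2$, a time of order $\varepsilon^{-2}$. For $n=3$, $\Phi_3(t)=\log(1+t)$, so the condition becomes $t\le e^{R}-1$, a time of exponential size $\sim e^{c/\varepsilon}$. For $n\ge4$ the exponent satisfies $\tfrac{1-n}{2}\le-\tfrac32$, so $\Phi_n$ is bounded: $\Phi_n(t)\le\int_0^\infty(1+\tau)^{\frac{1-n}{2}}\,\dl=\tfrac{2}{n-3}\le2$; hence $\tfrac{K}{2}\sqrt{z_0}\,\Phi_n(t)\le K\sqrt{z_0}\le\tfrac12$ for every $t$ as soon as $2C_m\max(\alpha,\beta)\varepsilon\sqrt{B\,E_{1,m}[u](0)}\le1$, i.e. the energy bound never breaks and $T^*=+\infty$.

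Finally I would close the bootstrap and upgrade these a priori bounds into genuine lower bounds on the lifespan. While $E_{1,m}[u](t)\le 4B\,E_{1,m}[u](0)$ holds, the Sobolev embedding $H^{[\frac n2+1]}\hookrightarrow L^\infty$ together with the smallness hypothesis $\sqrt{E_{1,m}[u](0)}\le(4\sqrt B\,C_\infty\alpha\varepsilon)^{-1}$ forces $\Vert u_t(t)\Vert_{L^\infty(\mathbb{R}^n)}\le 2C_\infty\sqrt{B\,E_{1,m}[u](0)}\le\tfrac{1}{2\alpha\varepsilon}$, exactly as in the proof of Theorem~\ref{ThTEt1}; thus the hyperbolicity condition~(\ref{conshyper}) is preserved and the local solution of Point~1 of Theorem~\ref{ThMainWPnu0} can be continued. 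A continuity argument on the set of times where the energy bound holds then shows it persists up to the thresholds found above, so $T^*$ is at least that large; letting $\varepsilon\to0$ with $E_{1,m}[u](0)$ fixed yields the three $\liminf$ statements. The main obstacle I anticipate is this last bookkeeping: one must verify that the two smallness constants (the $C_\infty$-constant guaranteeing persistence of hyperbolicity, and the one keeping the denominator $1-\tfrac{K}{2}\sqrt{z_0}\,\Phi_n$ positive) are compatible, and that the comparison $y\le z$ is used only on the interval where $z$ stays finite, so that a blow-up of the comparison function is never mistaken for blow-up of $u$.
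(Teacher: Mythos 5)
Your proposal is correct and follows essentially the same route as the paper, whose proof consists precisely of the one-line remark that the theorem follows from the Gronwall lemma applied to the a priori estimate~(\ref{EstE1Nnu0}) in the manner of John; your write-up simply fills in the details (the comparison ODE, the explicit integration of $\Phi_n$, and the bootstrap preserving~(\ref{conshyper})) that the paper leaves implicit. The only minor discrepancy is for $n=3$, where your argument yields the threshold $t\le e^{R}-1$ rather than the stated $2e^{R}$ with $R=\bigl(C_m\max(\alpha,\beta)\varepsilon\sqrt{B\,E_{1,m}[u](0)}\bigr)^{-1}$, but this is what the Gronwall computation actually gives and it suffices for the conclusion $\liminf_{\varepsilon\to 0}\varepsilon\log(T^*)>0$.
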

\begin{proof}
This is a direct consequence of the Gronwall lemma, used with the a priori estimate~(\ref{EstE1Nnu0}), as it is done by John in~\cite{John}.
\end{proof}
\begin{remark}
The estimations, given for $T^*$ in the case $n=1,2,3$, are optimal, as soon as, thanks to Alinhac~\cite{Alinhac},  they  give the existence time of a smooth solution of the same order as Alinhac's blow-up time, $i.e.$ up to the time of a geometrical blow-up formation.
\end{remark}

\section{Well-posedness for the viscous case}\label{secV}
\subsection{Proof of Point~1 of Theorem~\ref{ThMainWPnuPGlob}}\label{secVPr}
Let us show the global well-posedness, of the solution of the Cauchy problem~(\ref{kuz})-(\ref{ci}).
We start with the study of the  linear problem, associated to the Kuznetsov equation.
\begin{theorem}\label{ThLin1}
Let  $s\ge 0$ and $X$ be the space defined  in Point~1 of Theorem~\ref{ThMainWPnuPGlob}.
Then  the system
\begin{equation}\label{kuzlinhom}
\begin{cases}
u_{tt}-c^2\Delta u- \nu\varepsilon\Delta u_t=f, \\
u(0)=u_0,\;\;u_t(0)=u_1
\end{cases}
\end{equation}
 has a unique solution $u\in X$,
if and only if $f\in L^2(\R^+;H^s(\mathbb{R}^n)),$ $u_0\in H^{s+2}(\mathbb{R}^n)$ and $u_1\in H^{s+1}(\mathbb{R}^n)$.
Moreover it holds the following a priori estimate
\begin{equation}\label{kuzlinhomAPE}
 \Vert u\Vert_{X}\leq C \left(\Vert f \Vert_{L^2(\R^+;H^s(\mathbb{R}^n))}+\|u_0\|_{H^{s+2}(\mathbb{R}^n)}+\|u_1\|_{H^{s+1}(\mathbb{R}^n)} \right)
\end{equation}
with $\Vert u\Vert_{X}:=\Vert u\Vert_{H^2(\R^+;H^s)}+\Vert u\Vert_{L^2(\R^+;H^{s+2})}+\Vert u_t\Vert_{L^2(\R^+;H^{s+2})}$.
\end{theorem}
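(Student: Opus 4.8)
The plan is to diagonalise~(\ref{kuzlinhom}) by the partial Fourier transform in $x$, turning it into a one--parameter family of damped scalar ODEs, and then to recover the $X$--regularity frequency by frequency; an equivalent route is to regard $u\mapsto(u,u_t)$ as governed by the analytic semigroup generated by the strongly damped wave operator (Ikehata, Haraux) and to invoke its maximal $L^2$--regularity, but the Fourier picture makes the mechanism most transparent. Setting $\hat u(\xi,t)=\mathcal{F}_x u(\cdot,t)(\xi)$, the problem becomes, for a.e.\ fixed $\xi$,
\[
\hat u_{tt}+\nu\eps|\xi|^2\,\hat u_t+c^2|\xi|^2\,\hat u=\hat f(\xi,t),\qquad \hat u(\xi,0)=\hat u_0(\xi),\quad \hat u_t(\xi,0)=\hat u_1(\xi),
\]
with characteristic roots $\lambda_\pm(\xi)=\tfrac12\big(-\nu\eps|\xi|^2\pm\sqrt{\nu^2\eps^2|\xi|^4-4c^2|\xi|^2}\big)$ satisfying $\lambda_++\lambda_-=-\nu\eps|\xi|^2$, $\lambda_+\lambda_-=c^2|\xi|^2$, hence $\mathrm{Re}\,\lambda_\pm<0$ for all $\xi\neq0$. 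Duhamel's formula gives the explicit representation
\[
\hat u=\frac{\lambda_+e^{\lambda_-t}-\lambda_-e^{\lambda_+t}}{\lambda_+-\lambda_-}\,\hat u_0+\frac{e^{\lambda_+t}-e^{\lambda_-t}}{\lambda_+-\lambda_-}\,\hat u_1+\int_0^t\frac{e^{\lambda_+(t-\sigma)}-e^{\lambda_-(t-\sigma)}}{\lambda_+-\lambda_-}\,\hat f(\xi,\sigma)\,\dd\sigma,
\]
which is the only possible solution (any $X$--solution has a Fourier transform solving the ODE), giving uniqueness; on the transition set $\{\,|\xi|=2c/(\nu\eps)\,\}$ the $0/0$ quotients extend smoothly to the confluent form $t\,e^{\lambda t}$ and cause no trouble.

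For the a priori estimate~(\ref{kuzlinhomAPE}) I would split $\R^n_\xi$ into the oscillatory region $\{|\xi|<2c/(\nu\eps)\}$, where $\lambda_\pm=-\tfrac{\nu\eps}{2}|\xi|^2\pm i\,b(\xi)$ with $b(\xi)\simeq c|\xi|$, and the overdamped region $\{|\xi|>2c/(\nu\eps)\}$, where the roots are real with $\lambda_+\simeq-c^2/(\nu\eps)$ bounded and $\lambda_-\simeq-\nu\eps|\xi|^2$. Using $\int_0^\infty|e^{\lambda t}|^2\,\dd t=(2|\mathrm{Re}\,\lambda|)^{-1}$, Young's convolution inequality for the Duhamel term, and the identities for $\lambda_\pm$, one bounds the $L^2_t$ norms of $\hat u$, $\hat u_t$, $\hat u_{tt}$, $|\xi|^2\hat u$ and $|\xi|^2\hat u_t$ pointwise in $\xi$ by multiples of $|\hat u_0(\xi)|$, $|\hat u_1(\xi)|$ and $\|\hat f(\xi,\cdot)\|_{L^2_t}$ carrying the correct powers of $|\xi|$: the bounded root $\lambda_+$ propagates the solution at the regularity of $u_0$ (hence $u\in L^2_tH^{s+2}$), while the fast root $\lambda_-\simeq-\nu\eps|\xi|^2$ supplies the smoothing matching $u_t\in L^2_tH^{s+2}$ to $u_1\in H^{s+1}$ and $f\in L^2_tH^s$. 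Multiplying by $(1+|\xi|^2)^s$, integrating in $\xi$ and applying Plancherel in $x$ and $t$ then assembles~(\ref{kuzlinhomAPE}) and proves $u\in X$.

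The necessity direction is softer. If $u\in X$, reading the equation gives $f=u_{tt}-c^2\Delta u-\nu\eps\Delta u_t$, and each term lies in $L^2(\R^+;H^s)$ since $u\in H^2(\R^+;H^s)$ and $u,u_t\in L^2(\R^+;H^{s+2})$. The embedding $H^1(\R^+;H^{s+2})\hookrightarrow C(\overline{\R^+};H^{s+2})$ identifies $u_0=u(0)\in H^{s+2}$, and since $u_t\in L^2(\R^+;H^{s+2})\cap H^1(\R^+;H^s)$, the standard interpolation trace theorem yields $u_1=u_t(0)\in[H^{s+2},H^s]_{1/2}=H^{s+1}$.

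The step I expect to be the genuine obstacle is the low--frequency regime $\xi\to0$. There $\mathrm{Re}\,\lambda_\pm=-\tfrac{\nu\eps}{2}|\xi|^2\to0$, so the time decay degenerates, $\|\hat u(\xi,\cdot)\|_{L^2_t}^2\simeq(\nu\eps|\xi|^2)^{-1}|\hat u_0(\xi)|^2$, and controlling $\|u\|_{L^2_tH^{s+2}}$ amounts to bounding $\int_{|\xi|<1}|\xi|^{-2}|\hat u_0|^2\,\dd\xi$ together with the analogous weighted integrals of $\hat u_1$ and $\hat f$. This homogeneous low--frequency weight is the delicate point and is exactly where the geometry of the domain is decisive: in the periodic mean--zero setting of Point~3 of Theorem~\ref{ThMainWPnuPGlob}, condition~(\ref{percond}) keeps the frequencies dual to the periodic variable bounded away from $0$, so $|\xi|\ge\delta>0$ uniformly, $\mathrm{Re}\,\lambda_\pm\le-\tfrac{\nu\eps}{2}\delta^2<0$, and the resulting spectral gap restores exponential time decay (equivalently, the Poincar\'e inequality~(\ref{Poincar})) and hence the clean global $L^2_t$ bound. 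By contrast, once the frequency--wise estimates are in hand, the high--frequency smoothing and the trace identifications above are routine.
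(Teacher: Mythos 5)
Your route is genuinely different from the paper's: you diagonalise in space by the Fourier transform and try to read off the $X$-norm frequency by frequency from the explicit Duhamel kernel, whereas the paper multiplies the equation by $-\Delta u_t$ (following Haraux), lifts to general $s$ via $\Lambda^s=(1-\Delta)^{s/2}$, takes uniqueness from the homogeneous case (Ikehata), the traces from Amann's Theorem III.4.10.2, and deduces~(\ref{kuzlinhomAPE}) from the closed graph theorem. Your uniqueness argument and the necessity direction (reading $f$ from the equation, $H^1(\R^+;H^{s+2})\hookrightarrow C(\overline{\R^+};H^{s+2})$, and the interpolation trace $u_t(0)\in[H^{s+2},H^s]_{1/2}=H^{s+1}$) are sound and parallel the paper's. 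But the existence/a priori part of your proposal contains a genuine gap that you flag yourself and never close: the low-frequency estimate. Your second paragraph asserts frequency-wise bounds ``carrying the correct powers of $|\xi|$'' and concludes $u\in X$, while your last paragraph correctly computes that no such bound exists as $\xi\to0$, since $\Vert\hat u(\xi,\cdot)\Vert^2_{L^2_t}\simeq(\nu\eps|\xi|^2)^{-1}|\hat u_0(\xi)|^2$. These two statements are incompatible, and the gap cannot be repaired: the bound you would need, $\int_{|\xi|<1}|\xi|^{-2}|\hat u_0|^2\,\dd\xi\lesssim\Vert u_0\Vert^2_{H^{s+2}}$, is false in every dimension. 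Indeed, take $\hat u_0=|\xi|^{-(n-1)/2}\chi_{\{|\xi|<1/2\}}$, $u_1=0$, $f=0$: then $u_0\in H^k(\R^n)$ for every $k$, yet the (unique, Fourier-explicit) solution satisfies $\int_0^\infty\Vert u(t)\Vert^2_{L^2(\R^n)}\dt=+\infty$, so no solution in $X$ exists at all. The Poincar\'e/spectral-gap mechanism you invoke is available only in the periodic mean-zero setting of Point~3 of Theorem~\ref{ThMainWPnuPGlob}, not on $\R^n$.

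You should know, however, that this is not a defect of your method relative to the paper's: the paper's own proof crosses the same hole silently. The energy inequality~(\ref{IneqLin}) controls only $\sup_t\bigl(\Vert\nabla u_t\Vert^2_{L^2}+c^2\Vert\Delta u\Vert^2_{L^2}\bigr)$ and $\int_0^{\infty}\Vert\Delta u_\tau\Vert^2_{L^2}\dl$; the subsequent step ``since the domain of $-\Delta$ is $H^2$, we obtain $u,u_t\in L^2(\R^+;H^2)$ and $u_{tt}\in L^2(\R^+\times\R^n)$'' is unjustified on the infinite horizon, because ellipticity upgrades $\Vert\Delta v\Vert_{L^2}$ to $\Vert v\Vert_{H^2}$ only when $\Vert v\Vert_{L^2}$ is already controlled, and neither $\Vert u\Vert_{L^2(\R^+;L^2)}$ nor $\Vert\Delta u\Vert_{L^2(\R^+;L^2)}$ (the latter needed for $u_{tt}$ via the equation) follows from~(\ref{IneqLin}) — your counterexample shows both can be infinite. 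So your Fourier computation, pushed honestly to its end, is in fact a correct diagnosis that Theorem~\ref{ThLin1} as stated (time interval $\R^+$, spatial domain $\R^n$) fails at low frequencies; the statement becomes correct on bounded time intervals (with $T$-dependent constants), in the mean-zero periodic setting where~(\ref{Poincar}) holds, or after removing the $L^2(\R^+)$-in-time requirement on $u$ itself from the $X$-norm.
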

\begin{proof} First we take $f\in L^2(\R^+;H^s(\mathbb{R}^n)),$ $u_0\in H^{s+2}(\mathbb{R}^n)$ and $u_1\in H^{s+1}(\mathbb{R}^n)$. We use the ideas of~\cite{Haraux} (see Eq.~(4.26)).
For the sake of clarity, let us take $s=0$.
We take the inner product in $L^2(\R^n)$ of the equation with $-\Delta u_t$ and integrate by parts:
\begin{equation*}
 \frac{1}{2}\frac{d}{dt}\left(\|\nabla u_t\|^2_{L^2(\R^n)}+c^2\|\Delta u\|^2_{L^2(\R^n)} \right)+ \nu\eps \|\Delta u_t\|^2_{L^2(\R^n)}=-\int_{\R^n}f\Delta u_t\dx.
\end{equation*}
Using Young's inequality and integrating over $[0,t]$, we find
\begin{multline}\label{IneqLin}
 \frac{1}{2}\left(\|\nabla u_t\|^2_{L^2(\R^n)}+c^2\|\Delta u\|^2_{L^2(\R^n)} \right)+\frac{\nu\eps}{2} \int_0^t\|\Delta u_{\tau}\|^2_{L^2(\R^n)}\dl\\
 \le \frac{1}{2}\|\nabla u_1\|^2_{L^2(\R^n)}+\frac{1}{2}\|\Delta u_0\|^2_{L^2(\R^n)}+\frac{1}{2\nu\eps}\int_0^t\int_{\R^n}|f|^2\dx \dl.
\end{multline}
Since $f\in L^2(\R^+\times \R^n)$ and $(u_0,u_1)\in H^2(\R^n)\times H^1(\R^n)$, the last estimate implies that
$$\int_0^{+\infty}\int_{\R^n}|\Delta u_{\tau}|^2\dx\dl<+\infty.$$
Since the domain of $-\Delta$ is $H^2$, we obtain that
$$u,\; u_t\in L^2(\R^+;H^2(\mathbb{R}^n)),\quad \hbox{and} \quad u_{tt}\in L^2(\R^+\times \R^n),$$
and hence, $u\in X$ for $s=0$.
For $s>0$, as the equation is linear, we perform the same proof, using the fact that,  the operator $\Lambda= (1- \Delta)^{\frac{1}{2}}$, defined by its Fourier transform  by the formula $
\widehat{(\Lambda u)}(\zeta ) = (1+|\zeta|^2 )^{\frac{1}{2}} {\hat
u}(\zeta),$ relies  the norm of $H^s$ with the $L^2$-norm:
\begin{equation}\label{LL}
\Lambda^s= (1-\Delta)^{\frac{s}{2}}, \quad \|u\|_{H^s(\mathbb{R}^n)}=\| \Lambda^s
u \|_{L^2(\mathbb{R}^n)}.
\end{equation}
The uniqueness of $u$ follows from the linearity of the operator and the uniqueness of the solution of system (\ref{kuzlinhom}) in the case $f=0$  \cite{Ikehata}.

Conversely, if $u\in X$ solution of system (\ref{kuzlinhom}), this implies that
 $$u\in C(\R^+;H^{s+2}(\mathbb{R}^n))\quad \hbox{and} \quad u_t\in H^1(\R^+;H^{s}(\mathbb{R}^n))\cap L^2(\R^+;H^{s+2}(\mathbb{R}^n)).$$
Thanks to Theorem III.4.10.2 in~\cite{Amann}, it follows that
 $u_t\in C(\R^+;H^{s+1}(\mathbb{R}^n))$. Then we have $u(0)\in H^{s+2}(\mathbb{R}^n)$ and $u_t(0)\in H^{s+1}(\mathbb{R}^n)$. Moreover, it reads directly from the definition of $X$, that $f\in ~ L^2(\R^+;H^s(\mathbb{R}^n))$ for $u\in X$.

 The a priori estimate follows from the closed graph theorem.
\end{proof}
Let us
notice that Theorem~\ref{ThLin1} states that problem~(\ref{kuzlinhom}) has $L^2$-maximal regularity (see~\cite{Srivastava} Definition 2.1) on $\R^+$.

To be able to give a sharp estimate of the smallness of the initial data and in the same time to estimate the bound of the corresponding solution of the Kuznetsov equation (see Point~1 of Theorem~\ref{ThMainWPnuPGlob}), we use the following theorem from~\cite{Sukhinin}, which  allows us to establish our main result of the global well-posedness of the Cauchy problem for the Kuznetsov equation:
%
\begin{theorem}\label{thSuh}
(Sukhinin) Let $X$ be a Banach space, let $Y$ be a separable
topological vector space, let $L : X \rightarrow Y$ be a linear
continuous operator, let $U$ be the open unit ball in $X$, let ${\rm
P}_{LU}:LX \to [0,\infty [$ be the Minkowski functional of the set
$LU$, and let $\Phi :X \to LX$ be a mapping satisfying the condition
\begin{equation*}
 {\rm P}_{LU} \bigl(\Phi (x) -\Phi (\bar{x})\bigr) \leq
\Theta (r) \left\|x -\bar{x} \right\|\quad \text{for} \quad \left\|x
-x_0 \right\| \leqslant r,\quad \left\|\bar{x} -x_0 \right\| \leq r
\end{equation*} for some $x_0 \in X,$ where $\Theta :[0,\infty [ \to [0,\infty [$ is a monotone
non-decreasing function. Set $b(r) =\max \bigl(1 -\Theta (r),0
\bigr)$ for $r \geq 0$.

 Suppose that $$w =\int\limits_0^\infty b(r)\,dr \in ]0,\infty ], \quad r_* =\sup \{ r
\geq 0|\;b(r) >0 \},$$

$$w(r) =\int\limits_0^r b(t)dt \quad (r \geq 0) \quad\hbox{and} \quad f(x) =Lx
+\Phi(x) \quad \hbox{for} \quad x \in X.$$
Then for any $r \in
[0,r_*[$ and $ y \in f(x_0) +w(r)LU$, there exists an
 $ x \in x_0 +rU$ such that $f(x) =y$.
\end{theorem}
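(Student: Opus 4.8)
The plan is to solve $f(x)=y$ by a damped (modified) Newton scheme whose linear step inverts $L$ through the Minkowski functional, while the nonlinear defect is kept under control by the $P_{LU}$-Lipschitz hypothesis on $\Phi$. First I would record the two elementary facts about $P_{LU}$ that drive everything: since $LU$ is the image of the open unit ball, for $z\in LX$ one has $P_{LU}(z)=\inf\{\|h\|:\,Lh=z\}$, whence (i) $P_{LU}(Lh)\le\|h\|$, and (ii) whenever $P_{LU}(z)<\lambda$ there is an $h\in X$ with $Lh=z$ and $\|h\|<\lambda$. Moreover, $L$ being continuous and $U$ bounded, the set $LU$ is bounded in $Y$, so $P_{LU}(v_k)\to 0$ forces $v_k\to 0$ in $Y$; this is what will let me pass from $P_{LU}$-smallness to genuine convergence in $Y$ at the end. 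Note $\Phi$ maps into $LX$, so $f(x_0)\in LX$ and $y\in f(x_0)+w(r)LU\subseteq LX$. The hypothesis then reads, via (i), $\rho_0:=P_{LU}\big(y-f(x_0)\big)<w(r)$ (the inequality strict because $LU$ comes from the \emph{open} ball), and I would set aside the slack $\eta:=w(r)-\rho_0>0$.

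Next I would build the iteration. With $g_n:=y-f(x_n)\in LX$ and $\rho_n:=P_{LU}(g_n)$, pick by (ii) a Newton direction $h_n$ with $Lh_n=g_n$ and $\|h_n\|$ as close to $\rho_n$ as desired, and take a partial step $x_{n+1}=x_n+\delta_n\,h_n/\|h_n\|$ with $0<\delta_n\le\rho_n$. Because $Lh_n=g_n$, the linear part telescopes exactly: $g_{n+1}=\big(1-\delta_n/\|h_n\|\big)g_n-\big(\Phi(x_{n+1})-\Phi(x_n)\big)$. Since $P_{LU}$ is a seminorm (as $LU$ is convex and balanced) and $\|x_{n+1}-x_n\|=\delta_n$, the Lipschitz hypothesis applied at radius $t_{n+1}:=\|x_{n+1}-x_0\|$ yields
\begin{equation*}
\rho_{n+1}\le\big(1-\delta_n/\|h_n\|\big)\rho_n+\Theta(t_{n+1})\,\delta_n,
\end{equation*}
which, on letting $\|h_n\|\to\rho_n$, is $\rho_{n+1}\le\rho_n-b(t_{n+1})\,\delta_n$ up to a summable error coming from the freedom in $\|h_n\|$, to be folded into $\eta$. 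As $t_n=\sum_{k<n}\delta_k$ increases, the per-step reduction $b(t_{n+1})\delta_n$ is a right-endpoint Riemann sum for $\int b=w$.

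The delicate point, and the one I expect to be the main obstacle, is upgrading this to the \emph{sharp} conclusion $\|x^*-x_0\|<r$ rather than a lossy one. The naive invariant $\rho_n\le\rho_0-w(t_n)$ is not preserved step by step, because the Lipschitz constant across the increment is forced to be the larger value $\Theta(t_{n+1})$, so each step reduces the residual by only $b(t_{n+1})\delta_n\le\int_{t_n}^{t_{n+1}}b$. I would cure this by taking fine steps $\delta_n=\min(\rho_n,h)$ with $0<h<\eta$: the total Riemann gap $\sum\big(\int_{t_n}^{t_{n+1}}b-b(t_{n+1})\delta_n\big)$ is then at most $h$ times the total variation of the monotone $b$, hence at most $h<\eta$, and the initial slack absorbs it. Summing the step inequality gives $w(t_N)\le\rho_0-\rho_N+h<w(r)$ for every $N$, whence $t_N<r$ and $t_\infty:=\lim t_N<r$. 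Since $t_n$ is increasing and bounded, $\sum\delta_n<\infty$, so $(x_n)$ is Cauchy with limit $x^*$ and $\|x^*-x_0\|\le t_\infty<r$, i.e.\ $x^*\in x_0+rU$. On the tail where $\delta_n=\rho_n$ the bound becomes $\rho_{n+1}\le(1-b(r))\rho_n$ with $b(r)>0$ (as $r<r_*$), forcing $\rho_n\to0$ geometrically.

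Finally I would pass to the limit. Because $\rho_n=P_{LU}(g_n)\to0$ and $LU$ is bounded in $Y$, we get $g_n\to0$ in $Y$; continuity of $L$ together with the $Y$-continuity of $\Phi$ (again from boundedness of $LU$ and the $P_{LU}$-Lipschitz bound) give $f(x_n)\to f(x^*)$, so $0=\lim g_n=y-f(x^*)$, that is $f(x^*)=y$ with $x^*\in x_0+rU$. The only quantitative ingredient beyond soft functional analysis is the integral comparison for the monotone $b$, which is exactly where the precise constant $w(r)$ enters and makes the slack $\eta$ do its work.
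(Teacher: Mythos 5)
First, a point of order: the paper does not prove this statement at all --- it is quoted from Sukhinin's work and used as a black box, the only accompanying material being Remark~\ref{remch22.1.}, which notes that when $L$ is injective, or when ${\rm Ker}\,L$ has a topological complement $E$ with $L(E\cap U)=LU$, the assertion follows from the contraction mapping principle. So there is no in-paper proof to compare against, and your argument must be judged on its own terms. Its overall design --- a damped Newton iteration whose linear step uses $P_{LU}(z)=\inf\{\|h\|:\,Lh=z\}$ to pick an almost-minimal-norm preimage afresh at every step (thereby dispensing with the fixed right inverse that the contraction-principle argument of the Remark would need), with the defect propagated through the seminorm $P_{LU}$ --- is the right kind of proof for a theorem of this type, and most of it is watertight: facts (i)--(ii), the seminorm property of $P_{LU}$ (the set $LU$ is convex, balanced and absorbing in $LX$), the telescoping identity for $g_{n+1}$, the strictness $\rho_0<w(r)$ coming from openness of $U$, the Riemann-sum comparison with total gap at most $h\cdot{\rm var}(b)\le h$, and the endgame $P_{LU}(g_n)\to 0\Rightarrow g_n\to 0$ in $Y$ via boundedness of $LU$. (This endgame, the uniqueness of limits, and the exclusion of the stalling case $\rho_n=0$ with $g_n\neq 0$ all silently use that $Y$ is Hausdorff; this should be said explicitly.)

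There is, however, one step that as written is circular. The per-step inequality $\rho_{n+1}\le\rho_n-b(t_{n+1})\delta_n+\varepsilon_n$ is valid only when $\Theta(t_{n+1})\le 1$, i.e.\ when $b(t_{n+1})=1-\Theta(t_{n+1})$; if $\Theta(t_{n+1})>1$, the genuine bound $\rho_{n+1}\le\rho_n-\bigl(1-\Theta(t_{n+1})\bigr)\delta_n+\varepsilon_n$ permits the residual to grow and is not majorized by the one written with $b$. But you establish $t_N<r$ (hence $\Theta(t_{N})\le\Theta(r)<1$, using $r<r_*$) precisely by summing those same inequalities, so the hypothesis of each step is extracted from its own conclusion. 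The repair is an induction which your fine steps make available: assume $t_k<r$ for all $k\le n$; the summed inequalities up to step $n$ give $w(t_n)\le\rho_0-\rho_n+h+\sum_k\varepsilon_k<w(r)-\eta/2$ once $h+\sum_k\varepsilon_k<\eta/2$; since $b\le 1$ implies $w(r)-w(t_n)\le r-t_n$, this yields $r-t_n>\eta/2$, so with step size $h<\eta/2$ you get $t_{n+1}=t_n+\delta_n<r$, which legitimizes the step-$n$ inequality and closes the induction (alternatively, when $r_*<\infty$, take also $h<r_*-r$ so that $t_{n+1}<r_*$ automatically). Finally, define $t_n$ once and for all as the partial sum $\sum_{k<n}\delta_k$ --- not as $\|x_n-x_0\|$, which need not be monotone --- so that the Lipschitz hypothesis invoked at radius $t_{n+1}$ really does cover both $x_n$ and $x_{n+1}$. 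With these adjustments your proof is complete and constitutes a legitimate self-contained proof of the quoted theorem.
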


\begin{remark} \label{remch22.1.} If either $L$ is injective or $KerL$ has a topological
complement $E$ in $X$ such that $L(E \cap U) =LU$, then the
assertion of Theorem~\ref{thSuh} follows from the contraction
mapping principle~\cite{Sukhinin}. In particular, if $L$ is injective,
then the solution is unique.
\end{remark}

Now, we have all elements to prove Point~1 of Theorem~\ref{ThMainWPnuPGlob}:
for all $r\in [0, r^*[$ with $r^*=O(\eps^0)=O(1)$ (to be defined), as soon as the initial data are small as
\begin{equation}\label{EqSMID}
 \|u_0\|_{H^{s+2}(\mathbb{R}^n)}+\|u_1\|_{H^{s+1}(\mathbb{R}^n)}\le C\sqrt{\eps}r\quad \hbox{with } C=O(1),
\end{equation}
then the unique solution $u\in X$ satisfies
$\|u\|_X\le 2r$ ($r=O(1)$).
\begin{remark}
 It is very important to notice   that here all physical coefficients of the  Cauchy problem for the Kuznetsov equation are expressed to compare to the powers of $\eps$ ($\eps$ is the dimensionless parameter caracterising the medium perturbation as explained in~\cite{Roz3} and~\cite{Roz1}).
 In particular, if we  take into account  in Point~3 of Theorem~\ref{ThMainWPnu0}  that $c^2=O(\frac{1}{\eps})$, we  obtain the same types of smallness of the initial energy for the inviscid case as in Point~2 of Theorem~\ref{ThMainWPnuPGlob}:  $\sqrt{E_{m_0}[u](0)}\le O(\sqrt{\eps})$.
But, if we want to understand the smallness  of the initial data by their norms without the calculus of the initial energy, 
the results of Point~1 of Theorem~\ref{ThMainWPnuPGlob} can be useful.
The sharp character of Point~1 of Theorem~\ref{ThMainWPnuPGlob} can be illustrated by the following direct energy estimation approach, presented in Appendix~\ref{Appen2}.

 Let suppose that Point~2 of Theorem~\ref{ThMainWPnuPGlob} holds (see also Eq.~(\ref{EnergEN2})). Thus, for  $n\ge 3$, $m\ge \left[\frac{n}{2}+3 \right]$ if
$$\sqrt{E_{\frac{m}{2}}[u](0)}=\sqrt{\Vert \nabla u(0)\Vert_{H^m(\mathbb{R}^n)}^2+\sum_{i=1}^{\frac{m}{2}+1} \Vert \partial_t^i u(0)\Vert_{H^{m-2(i-1)}(\mathbb{R}^n)}^2}\le O(\sqrt{\eps}),$$
then it follows in a sufficient way (see Appendix~\ref{Appen2} for more details) that for $u_0\in H^{m+1}(\R^n)$ and for $u_1\in H^m(\R^n)$ it holds
\begin{equation}\label{EqPDI}
 \|\nabla u_0\|_{H^m(\mathbb{R}^n)}+\|u_1\|_{H^m(\mathbb{R}^n)}\le O(\sqrt{\eps^{m+1}}),
\end{equation}
 which implies the existence of  a unique global solution $u\in C^0(\mathbb{R}^+;H^{m+1}(\mathbb{R}^n))\cap C^1(\mathbb{R}^+;H^{m}(\mathbb{R}^n)) $ of problem~(\ref{kuz})--(\ref{ci}) such that for all $t\in \mathbb{R}^+$
$$E_{\frac{m}{2}}[u](t)\leq O\left(\frac{1}{\eps}\right)E_{\frac{m}{2}}[u](0)=O(1).$$
Thus we see that  by this approach the sufficient condition to have for all $t\geq 0$ $E_{\frac{m}{2}}[u](t)$ bounded by a constant of order zero on $\eps$ is given by Eq.~(\ref{EqPDI}) and depends on the smooth properties of the initial data (more they are regular, more they should be small). Hence, it is much more restrictive to compare to~(\ref{EqSMID}).
\end{remark}

\begin{proof}

For $u_0\in H^{s+2}(\mathbb{R}^n)$ and $u_1\in H^{s+1}(\mathbb{R}^n)$ let us denote by $u^*\in X$ the unique solution of the linear problem
\begin{equation}\label{kuzlin0inhom}
\begin{cases}
u^*_{tt}-c^2\Delta u^*-\nu\varepsilon \Delta u^*_t=0, \\
u^*(0)=u_0\in H^{s+2}(\mathbb{R}^n),\;\;u^*_t(0)=u_1\in H^{s+1}(\mathbb{R}^n).
\end{cases}
\end{equation}

In addition, according to Theorem~\ref{ThLin1}, we take $$X:=H^2(\R^+;H^{s}(\mathbb{R}^n))\cap H^1(\R^+;H^{s+2}(\mathbb{R}^n)),$$ this time for $s>\frac{n}{2}$ (we need it to control the non-linear terms),  and introduce the Banach spaces
\begin{equation}
 X_0:=\lbrace u\in X|\; u(0)=u_t(0)=0 \rbrace
\end{equation}
and $Y=L^2(\R^+;H^s(\mathbb{R}^n))$. Then by Theorem~\ref{ThLin1}, the linear operator
$$L:X_0\rightarrow Y,\quad  u\in X_0\mapsto\;L(u):=u_{tt}-c^2\Delta u-\nu \varepsilon \Delta u_t\in Y,$$
 is a bi-continuous isomorphism.

 Let us now notice that if $v$ is the unique solution of the non-linear Cauchy problem
 \begin{equation}\label{SystkuznV}
\begin{cases}
v_{tt}-c^2\Delta v-\nu\varepsilon \Delta v_t-\alpha \varepsilon (v+u^*)_t(v+u^*)_{tt}-\beta \varepsilon \nabla (v+u^*).\nabla(v+u^*)_t=0, \\
v(0)=0,\quad v_t(0)=0,
\end{cases}
\end{equation}
 then $u=v+u^*$ is the unique solution of the Cauchy problem for the Kuznetsov equation~(\ref{kuz})--(\ref{ci}).
 Let us prove the existence of a such $v$, using Theorem~\ref{thSuh}.

We suppose that $\Vert u^*\Vert_X\leq r$
and define for $v\in X_0$
$$\Phi(v):=\alpha \varepsilon (v+u^*)_t(v+u^*)_{tt}+\beta \varepsilon \nabla (v+u^*).\nabla(v+u^*)_t.$$

For $w$ and $z$ in $X_0$ such that
$\Vert w\Vert_X\leq r$ and $\Vert z\Vert_X\leq r$,
 we estimate
 \begin{multline*}
 \Vert \Phi(w)-\Phi(z)\Vert_Y=\Vert \alpha \varepsilon (u^*_t (w-z)_{tt}+(w-z)_t u^*_{tt}+w_t w_{tt}-z_t z_{tt})\\
 + \beta \varepsilon(\nabla u^* \nabla(w-z)_t+\nabla (w-z)\nabla u^*_t+\nabla w\nabla w_t-\nabla z \nabla z_t)\Vert_Y\\
 = \Vert \alpha \varepsilon (u^*_t (w-z)_{tt}+(w-z)_t u^*_{tt}+w_t (w-z)_{tt}+(w-z)_t z_{tt})\\
 + \beta \varepsilon(\nabla u^* \nabla(w-z)_t+\nabla (w-z)\nabla u^*_t+\nabla w\nabla (w-z)_t+\nabla (w-z) \nabla z_t)\Vert_Y
 \end{multline*}
by applying the triangular inequality
\begin{multline*}
\Vert \Phi(w)-\Phi(z)\Vert_Y\leq  \alpha \varepsilon \Big(\Vert u^*_t (w-z)_{tt} \Vert_Y+\Vert (w-z)_t u^*_{tt}\Vert_Y\\
+\Vert  w_t (w-z)_{tt}\Vert_Y+\Vert (w-z)_t z_{tt}\Vert_Y\Big)\\
+\beta \varepsilon\Big( \Vert \nabla u^* \nabla(w-z)_t \Vert_Y+\Vert \nabla (w-z)\nabla u^*_t \Vert_Y\\
+\Vert \nabla w\nabla (w-z)_t \Vert_Y+\Vert \nabla (w-z) \nabla z_t \Vert_Y\Big).
\end{multline*}
 Now, for all $a$ and $b$ in $X$ with $s\ge s_0> \frac{n}{2}$  it holds
 \begin{align*}
 \Vert a_t b_{tt}\Vert_Y\leq & \Vert a_t \Vert_{L^\infty(\R^+\times\mathbb{R}^n)} \Vert b_{tt}\Vert_Y\\
 \leq & C_{H^1(\R^+;H^{s_0})\to L^\infty(\R^+\times\mathbb{R}^n)} \Vert a_t\Vert_{H^1(\R^+;H^s(\mathbb{R}^n))} \Vert b\Vert_{X}\\
 \leq & C_{H^1(\R^+;H^{s_0})\to L^\infty(\R^+\times\mathbb{R}^n)} \Vert a\Vert_{X} \Vert b\Vert_{X},
\end{align*}
where $C_{H^1(\R^+;H^{s_0})\to L^\infty(\R^+\times\mathbb{R}^n)}$ is the embedding constant of $H^1(\R^+;H^{s_0})$ into the space $L^\infty(\R^+\times\mathbb{R}^n)$, independent on $s$, but depending only on the dimension $n$.
In the same way, for all $a$ and $b$ in $X$ it holds
$$\Vert \nabla a \nabla  b_t\Vert_Y\leq C_{H^1(\R^+;H^{s_0})\to L^\infty(\R^+\times\mathbb{R}^n)} \Vert a\Vert_{X} \Vert b\Vert_{X}.$$
Taking $a$ and $b$ equal to $u^*$, $w$, $z$ or $w-z$, as $\Vert u^*\Vert_X\leq r$, $\Vert w\Vert_X\leq r$ and $\Vert z\Vert_X\leq r$, we obtain
\begin{align*}
\Vert \Phi(w)-\Phi(z)\Vert_Y\leq 
4 (\alpha+\beta)C_{H^1(\R^+;H^{s_0})\to L^\infty(\R^+\times\mathbb{R}^n)} \varepsilon r \Vert w-z\Vert_X.
\end{align*}
By the fact that $L$ is a bi-continuous isomorphism, there exists a minimal constant $C_\eps=O\left(\frac{1}{\eps \nu} \right)>0$ (coming from the inequality $C_0 \eps \nu\|u\|_X^2\le \|f\|_Y\|u\|_X$ for $u$, a solution of the linear problem~(\ref{kuzlinhom}) with homogeneous initial data [for a constant $C_0=O(1)>0$ maximal])
such that
$$\forall u\in X_0 \quad \Vert u\Vert_X\leq C_\eps \Vert Lu\Vert_Y.$$
Hence, for all $f\in Y$
$$P_{LU_{X_0}}(f)\leq C_\eps P_{U_Y}(f)=C_\eps\Vert f\Vert_Y.$$
Then we find for $w$ and $z$ in $X_0$, such that $\|w\|_X\le r$, $\|z\|_X\le r$, and also with $\|u^*\|_X\le r$, that
$$P_{LU_{X_0}}(\Phi(w)-\Phi(z))\leq \Theta(r) \Vert w-z\Vert_X,$$
where $\Theta(r):= 4 C_\eps (\alpha+\beta)C_{H^1(\R^+;H^{s_0})\to L^\infty(\R^+\times\mathbb{R}^n)}\varepsilon r$.
Thus we apply Theorem~\ref{thSuh} for 
\\$f(x)=L(x)-\Phi(x)$ and $x_0=0$. Therefore, knowing that $C_\eps=\frac{C_0}{\eps \nu}$, we have, that for all  $r\in[0,r_{*}[$ with
\begin{equation}\label{Eqret}
 r_{*}=\frac{\nu}{4 C_0 (\alpha+\beta)C_{H^1(\R^+;H^{s_0})\to L^\infty(\R^+\times\mathbb{R}^n)}}=O(1),
\end{equation}
  for all $y\in \Phi(0)+w(r) L U_{X_0}\subset Y$
with $$w(r)= r-2 \frac{C_0}{\nu} C_{H^1(\R^+;H^{s_0})\to L^\infty(\R^+\times\mathbb{R}^n)} (\alpha+\beta) r^2,$$
there exists a unique $v\in 0+r U_{X_0}$ such that $L(v)-\Phi(v)=y$.
But, if we want that $v$ be the solution of the non-linear Cauchy problem~(\ref{SystkuznV}), then we need to impose $y=0$, and thus to ensure that $0\in \Phi(0)+w(r) L U_{X_0}$.
Since $-\frac{1}{w(r)}\Phi(0)$ is an element of $Y$ and $LX_0=Y$, there exists a unique $z\in X_0$ such that
\begin{equation}\label{Eqz}
 L z=-\frac{1}{w(r)}\Phi(0).
\end{equation}
Let us show that $\|z\|_X\le 1$, what will implies that $0\in \Phi(0)+w(r) L U_{X_0}$.
Noticing that
\begin{align*}
\Vert \Phi(0)\Vert_Y & \leq \alpha \varepsilon \Vert v_t v_{tt}\Vert_Y +\beta \varepsilon \Vert \nabla v \nabla v_t\Vert_Y\\
& \leq  (\alpha+\beta) \varepsilon C_{H^1(\R^+;H^{s_0})\to L^\infty(\R^+\times\mathbb{R}^n)}\Vert v\Vert_X^2 \\
& \leq (\alpha+\beta) \varepsilon C_{H^1(\R^+;H^{s_0})\to L^\infty(\R^+\times\mathbb{R}^n)}r^2
\end{align*}
and using~(\ref{Eqz}), we find
\begin{align*}
 & \Vert z\Vert_X \leq C_\eps\Vert L z\Vert_Y=C_\eps\frac{\Vert \Phi(0)\Vert_Y}{w(r)}\\
 &\leq \frac{C_\eps C_{H^1(\R^+;H^{s_0})\to L^\infty(\R^+\times\mathbb{R}^n)} (\alpha+\beta) \varepsilon r}{(1-2 C_\eps C_{H^1(\R^+;H^{s_0})\to L^\infty(\R^+\times\mathbb{R}^n)} (\alpha+\beta)\varepsilon r)}<\frac{1}{2},
\end{align*}
as soon as $r<r^*$.

Consequently, $z\in U_{X_0}$ and $\Phi(0)+w(r) Lz=0$.

Then we conclude that  for all  $r\in[0,r_{*}[$, if $\|u^*\|_X\le r$, there exists a unique $v\in r U_{X_0}$ such that $L(v)-\Phi(v)=0$, $i.e.$  the solution of the non-linear Cauchy problem~(\ref{SystkuznV}).
Thanks to the maximal regularity and a priori estimate following from inequality~(\ref{IneqLin}) with $f=0$,
there exists a constant $C_1=O(\eps^0)>0$, such that
$$\|u^*\|_X\le \frac{C_1}{\sqrt{\nu \eps}}(\Vert u_0\Vert_{H^{s+2}(\mathbb{R}^n)}+\Vert u_1\Vert_{H^{s+1}(\mathbb{R}^n)}).$$

Thus, for all  $r\in[0,r_{*}[$ and $\Vert u_0\Vert_{H^{s+2}(\mathbb{R}^n)}+\Vert u_1\Vert_{H^{s+1}(\mathbb{R}^n)}\le \frac{\sqrt{\nu \eps}}{C_1}r$, the function $u=u^*+v\in X$ is the unique solution of the Cauchy problem for the Kuznetsov equation and $\Vert u\Vert_X\leq 2 r$.
\end{proof}

\subsection{Proof of Point~2 of Theorem~\ref{ThMainWPnuPGlob}: Case $n\ge 3$}\label{sec4}

 Knowing the existence of a solution $u$ of the Kuznetsov equation in $$X=H^2(\R^+;H^{s}(\mathbb{R}^n))\cap H^1(\R^+;H^{s+2}(\mathbb{R}^n)),$$ we notice that this directly implies that
 $$u\in C(\R^+;H^{s+2}(\mathbb{R}^n))\quad \hbox{and} \quad u_t\in H^1(\R^+;H^{s}(\mathbb{R}^n))\cap L^2(\R^+;H^{s+2}(\mathbb{R}^n)).$$ By Theorem III.4.10.2 in~\cite{Amann}, it implies that
 $u_t\in C(\R^+;H^{s+1}(\mathbb{R}^n))$, which gives that $$u\in C^1(\R^+;H^{s+1}(\mathbb{R}^n))\cap C(\R^+;H^{s+2}(\mathbb{R}^n))$$ and, this time with the help of the Kuznetsov equation, $u_{tt}\in C(\R^+;H^{s-1}(\mathbb{R}^n))$.
 Consequently, in the viscous case the regularity of the time derivatives of the order greater than two of the solutions differs  from the regularity, obtained in Section~\ref{sec2} for the  inviscid case. Thus we have to consider estimates with different energies:
the energy $E_{\frac{m}{2}}[u](t)$, defined in Eq.~(\ref{EnergEN2}), and the energy

\begin{equation}\label{EnSm2}
 S_{\frac{m}{2}}[u](t)=\sum_{i=1}^{\frac{m}{2}+1} \Vert\nabla  \partial_t^i u(t)\Vert_{H^{m-2(i-1)}(\mathbb{R}^n)}^2,
\end{equation}
 defined, as $E_{\frac{m}{2}}[u](t)$,  for $m\in\mathbb{N}$ and $m$ even, which respect to the obtained regularity of $u$ and its derivatives.

\begin{lemma}\label{PropEnergiVisc}
Let $n\in\mathbb{N}^*$, $n\geq 3$, $m\in \mathbb{N}$, and  $u$ be the solution of  problem~(\ref{kuz})-(\ref{ci}). Then  for $m\geq \left[\frac{n}{2}+3\right]$, $m$ even,  and all multi-index $A=(A_0,A_1,...,A_n)$ with $\vert A\vert-A_0 \leq m-2A_0$ it holds
\begin{equation}\label{apglobalindA}
\begin{aligned}
\frac{d}{dt}\Big(\int_{\mathbb{R}^n} ((1-\alpha\varepsilon u_t) & (D^A u_t)^2+c^2 (\nabla D^A u)^2))(\tau,x)\;\dx\Big) \\
&+2\nu\varepsilon \int_{\mathbb{R}^n} (\nabla D^A u_t)^2(\tau,x)\;\dx\\
& \leq C_m \max(\alpha,\beta)\varepsilon \sqrt{E_{\frac{m}{2}}[u](\tau)}S_{\frac{m}{2}}[u](\tau)
\end{aligned}
\end{equation}
with a constant $C_m>0$, depending only on $m$ and on the dimension $n$.
\end{lemma}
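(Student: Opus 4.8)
The plan is to follow the proof of Proposition~\ref{PropApEstNV1} from Appendix~\ref{Appen1}, adapting it to the dissipative setting. Write $\mathcal{N}:=\alpha\eps\,u_tu_{tt}+\beta\eps\,\nabla u\cdot\nabla u_t$ for the nonlinear right-hand side of~(\ref{kuz}) and set $v:=D^Au$. Since the multi-index operator $D^A$ has constant coefficients it commutes with $\partial_t^2-c^2\Delta-\nu\eps\Delta\partial_t$, so applying $D^A$ to~(\ref{kuz}) gives $v_{tt}-c^2\Delta v-\nu\eps\Delta v_t=D^A\mathcal{N}$. Taking the $L^2(\R^n)$ inner product with $v_t=D^Au_t$ and integrating by parts in space yields the energy identity
\begin{equation*}
\tfrac12\tfrac{d}{dt}\int_{\R^n}\big(v_t^2+c^2(\nabla v)^2\big)\dx+\nu\eps\int_{\R^n}(\nabla v_t)^2\dx=\int_{\R^n}D^A\mathcal{N}\,v_t\dx.
\end{equation*}

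Exactly as in the inviscid case I would isolate, inside $D^A\mathcal{N}$, the single top-order term $\alpha\eps\,u_t\,v_{tt}$ arising in the Leibniz expansion of $D^A(\alpha\eps u_tu_{tt})$ when all derivatives fall on $u_{tt}$. Using $\int_{\R^n}u_t v_{tt}v_t\dx=\tfrac12\tfrac{d}{dt}\int_{\R^n}u_t v_t^2\dx-\tfrac12\int_{\R^n}u_{tt}v_t^2\dx$ and transferring this term to the left converts the kinetic part into the weighted form $\tfrac12\tfrac{d}{dt}\int_{\R^n}(1-\alpha\eps u_t)v_t^2\dx$, up to the residual $\tfrac{\alpha\eps}{2}\int_{\R^n}u_{tt}v_t^2\dx$. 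Multiplying the resulting identity by $2$ reproduces exactly the left-hand side of~(\ref{apglobalindA}), so the whole statement reduces to the single nonlinear bound $|R^A|\le C_m\max(\alpha,\beta)\eps\sqrt{E_{\frac{m}{2}}[u]}\,S_{\frac{m}{2}}[u]$, where
\begin{equation*}
R^A:=2\int_{\R^n}\big(D^A\mathcal{N}-\alpha\eps u_t v_{tt}\big)v_t\dx-\alpha\eps\int_{\R^n}u_{tt}v_t^2\dx.
\end{equation*}

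To control $R^A$ I would expand $D^A\mathcal{N}$ by the Leibniz rule into a finite sum of products of derivatives of $u$; subtracting the top-order term removes the factor carrying the full index $A$, so in each remaining product the derivatives are distributed over two factors, each of order at most $|A|\le m$. I would then apply H\"older's inequality, placing the least-differentiated factor in $L^\infty(\R^n)$, estimated by $\sqrt{E_{\frac{m}{2}}[u]}$ through the embedding $H^{[\frac{n}{2}+1]}(\R^n)\hookrightarrow L^\infty(\R^n)$, and keeping the more-differentiated factors in $L^2(\R^n)$. The combination of this embedding with the splitting of the energy into $\tfrac{m}{2}+1$ time-levels is what forces the regularity threshold $m\ge[\frac{n}{2}+3]$ with $m$ even.

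The step I expect to be the main obstacle is to make these $L^2(\R^n)$ factors land on the \emph{dissipation} energy $S_{\frac{m}{2}}[u]$ of~(\ref{EnSm2}) rather than on $E_{\frac{m}{2}}[u]$, since only then can the right-hand side of~(\ref{apglobalindA}) be absorbed by the dissipative term $2\nu\eps\int_{\R^n}(\nabla v_t)^2\dx$ in the global argument of Subsection~\ref{sec4}. Because $S_{\frac{m}{2}}$ controls only quantities $\nabla\partial_t^i u$ that carry a spatial gradient, I must ensure that every surviving top-order factor, as well as the residual $\int_{\R^n}u_{tt}v_t^2\dx$, contributes at least one spatial derivative in $L^2$. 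The terms issued from the $\beta$-nonlinearity already carry spatial gradients; for the $\alpha$-nonlinearity and for the residual I would convert the purely temporal high-order factors through the relation $\partial_t^{j}u=c^2\Delta\partial_t^{j-2}u+\nu\eps\Delta\partial_t^{j-1}u+\partial_t^{j-2}\mathcal{N}$ (valid for $j\ge2$ by~(\ref{kuz})), trading two temporal derivatives for a spatial Laplacian. The index constraint $|A|-A_0\le m-2A_0$ guarantees that, after this trade, all derivative orders stay within the classes defining $E_{\frac{m}{2}}$ and $S_{\frac{m}{2}}$, so that each $L^2$ factor is bounded by $\sqrt{S_{\frac{m}{2}}[u]}$ (or, for the low-order factor, by $\sqrt{E_{\frac{m}{2}}[u]}$); collecting the finitely many dimension- and $m$-dependent combinatorial constants into $C_m$ then gives the claim.
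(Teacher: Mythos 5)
Your setup — applying $D^A$ to the equation, forming the weighted energy $(1-\alpha\eps u_t)(D^Au_t)^2+c^2(\nabla D^Au)^2$, keeping the dissipative term $2\nu\eps\int(\nabla D^Au_t)^2\dx$ on the left, and reducing everything to a bound on the nonlinear remainder — coincides with the paper's reduction to estimating $\int_{\R^n}J[D^Au]\dx$. The gap is in how you propose to bound that remainder. Your mechanism is H\"older in the form $L^\infty\times L^2\times L^2$, with the $L^2$ factors pushed into $S_{\frac{m}{2}}$ by using the equation to trade $\partial_t^2$ for $c^2\Delta+\nu\eps\Delta\partial_t+(\text{nonlinearity})$. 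This fails structurally, because $S_{\frac{m}{2}}$ in~(\ref{EnSm2}) only contains $\nabla\partial_t^iu$ with $i\ge 1$: whenever the trade produces the term $c^2\Delta \partial_t^{j}u$ with $j=0$, that factor is controlled by $\sqrt{E_{\frac{m}{2}}}$ but \emph{not} by $\sqrt{S_{\frac{m}{2}}}$. A concrete failure case is the purely temporal index $A=(1,0,\dots,0)$, giving the term $\int_{\R^n}u_{tt}^3\dx$: your scheme yields
\begin{equation*}
\Vert u_{tt}\Vert_{L^\infty}\Vert u_{tt}\Vert_{L^2}^2\le\sqrt{E_{\frac{m}{2}}[u]}\,\bigl(c^2\Vert\Delta u\Vert_{L^2}+\nu\eps\Vert\Delta u_t\Vert_{L^2}+\Vert\mathcal{N}\Vert_{L^2}\bigr)\Vert u_{tt}\Vert_{L^2},
\end{equation*}
and the piece $c^2\Vert\Delta u\Vert_{L^2}$ lands on $\sqrt{E_{\frac{m}{2}}}$, producing $E_{\frac{m}{2}}^{3/2}$ rather than $\sqrt{E_{\frac{m}{2}}}\,S_{\frac{m}{2}}$ — exactly the bound that cannot be absorbed by the dissipation in the global argument of Subsection~\ref{sec4}. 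Moreover the substituted nonlinearity $\partial_t^{j-2}\mathcal{N}$ itself contains top-order time derivatives (since $\mathcal{N}\ni\alpha\eps u_tu_{tt}$), so the trade is recursive and generates cubic and higher terms your argument does not close.

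The missing idea is the one the paper builds the whole proof on: for $n\ge 3$, the Gagliardo--Nirenberg--Sobolev inequality~(\ref{EqGNS}), $\Vert w\Vert_{L^{\frac{2n}{n-2}}(\R^n)}\le C\Vert\nabla w\Vert_{L^2(\R^n)}$, lets you gain the spatial gradient \emph{analytically}, without invoking the equation. The paper never places the low-order factor in $L^\infty$; it uses the generalized H\"older inequality with exponents summing to $\frac{n+2}{2n}$ (not $1$), e.g. $\Vert u_{tt}\Vert_{L^{n/2}}\Vert D^Au_t\Vert_{L^{\frac{2n}{n-2}}}^2$ as in~(\ref{EstuttLn2})--(\ref{EqEstuttnu}), so that each top-order factor sits in $L^{\frac{2n}{n-2}}$ and is converted by~(\ref{EqGNS}) into $\Vert\nabla D^Au_t\Vert_{L^2}\le\sqrt{S_{\frac{m}{2}}[u]}$, while the low-order factor in $L^{n/2}$ (or $L^p$ with $H^{m_1}\subset L^p$ from~(\ref{injsoblp})) is bounded by $\sqrt{E_{\frac{m}{2}}[u]}$. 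This is also precisely where the hypothesis $n\ge 3$ enters — (\ref{EqGNS}) fails in $\R^2$, which is why the paper needs the Poincar\'e inequality~(\ref{Poincar}) in the periodic setting to recover the case $n=2$ — a restriction your proposal neither uses nor explains. Without replacing your $L^\infty\times L^2\times L^2$ splitting by this $L^{p}\times L^{q}\times L^{\frac{2n}{n-2}}$ splitting (and the ensuing case analysis on the multi-indexes), the claimed bound $C_m\max(\alpha,\beta)\eps\sqrt{E_{\frac{m}{2}}[u]}\,S_{\frac{m}{2}}[u]$ cannot be reached.
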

\begin{proof}
Following notations of the proof of Proposition~\ref{PropApEstNV1} in Annexe~\ref{Appen1}, we redefine
$$L_u v:= v_{tt}-c^2\Delta v-\nu\varepsilon \Delta v_t-\alpha\varepsilon u_t\;  v_{tt}-\beta \varepsilon \nabla u \;\nabla v_t,$$
 where $u$ is the solution of  problem~(\ref{kuz}). For this new $L_u v$ with the additional term $\nu\varepsilon \Delta v_t$,
we have a modified version of  relation~(\ref{relIJ})
\begin{equation}\label{relIJN}
 \frac{d}{dt}\int_{\mathbb{R}^n}I[v](t,x)\dx+2 \nu\varepsilon \int_{\mathbb{R}^n} (\nabla v_t)^2\dx=\int_{\mathbb{R}^n}J[v](t,x)\dx,
\end{equation}
where $I[v]$ and $J[v]$ are defined in Eqs.~(\ref{eqI})--(\ref{eqJ}).
We still take $v=D^A u$ with $A=(A_0,A_1,...,A_n)$, but this time $\vert A\vert-A_0 \leq m-2A_0$ and $m$ is even. Then we just need to show
\begin{equation}\label{EqEstJnu}
 \left\vert \int_{\mathbb{R}^n}J[D^A u](t,x)\dx\right\vert\leq \varepsilon C_m \max(\alpha,\beta) \sqrt{E_{\frac{m}{2}}[u](t)}S_{\frac{m}{2}}[u](t).
\end{equation}

For $n\geq 3$, $m\geq \left[\frac{n}{2}+3\right]$ and $m$ even, we have, thanks to the H\"{o}lder inequality,
\begin{align*}
\int_{\mathbb{R}^n} \vert u_{tt} (D^Au_t)^2\vert\dx \leq & \Vert u_{tt}\Vert_{L^{\frac{n}{2}}(\mathbb{R}^n)} \Vert D^A u_t\Vert_{L^{\frac{2n}{n-2}}(\mathbb{R}^n)}^2.
\end{align*}
Noticing, that, thanks to Ref.~\cite{ADAMS-1975} Theorem~7.57 p.~228,
for $s>\frac{n}{2}$ there hold the continuous embeddings $H^s(\R^n)\subset C_B^0(\R^n)\subset L^{\frac{n}{2}}(\R^n)$ (where $C_B^0$ is the Banach space of bounded continuous functions equal to zero at the infinity), we can write for $m\ge \left[\frac{n}{2}+3\right]$
\begin{equation}\label{EstuttLn2}
 \Vert u_{tt}\Vert_{L^{\frac{n}{2}}(\R^n)}\leq C\|u_{tt}\|_{H^{[\frac{n}{2}+1]}(\R^n)}\leq C\|u_{tt}\|_{H^{m-2}(\R^n)}\le C\sqrt{E_{\frac{m}{2}}[u]}.
\end{equation}

In addition, with the help of the Gagliardo-Nirenberg-Sobolev inequality
\begin{equation}\label{EqGNS}
 \|v\|_{L^{\frac{2n}{n-2}}(\R^n)}\leq C \|\nabla v\|_{L^2(\R^n)},
\end{equation}
we also have
$$\Vert D^A u_t\Vert_{L^{\frac{2n}{n-2}}(\mathbb{R}^n)}\le C \Vert \nabla D^A u_t\Vert_{L^{2}(\mathbb{R}^n)}\le C\Vert \nabla D^{A_0+1}_t u\Vert_{H^{|A|-A_0}(\mathbb{R}^n)}.$$
With the hypothesis that $\vert A\vert-A_0 \leq m-2A_0$, there hold $2A_0\le m$ and
$$\Vert \nabla D^{A_0+1}_t u\Vert_{H^{|A|-A_0}(\mathbb{R}^n)}\le \Vert \nabla D^{A_0+1}_t u\Vert_{H^{m-2A_0}(\mathbb{R}^n)}.$$
Therefore, all norms $\Vert \nabla D^{A_0+1}_t u\Vert^2_{H^{m-2A_0}(\mathbb{R}^n)}$, for the chosen $n,$ $m$ and $A_0$, are present in $S_{\frac{m}{2}}$. Hence, we find
\begin{align}\label{EqEstuttnu}
\int_{\mathbb{R}^n} \vert u_{tt} (D^Au_t)^2\vert\dx
\leq  C \Vert u_{tt}\Vert_{H^{m-2}(\mathbb{R}^n)} \Vert\nabla D^A u_t\Vert_{L^2(\mathbb{R}^n)}^2
\leq C \sqrt{E_{\frac{m}{2}}[u]} S_{\frac{m}{2}}[u],
\end{align}
and in the same way,
\begin{align*}
\int_{\mathbb{R}^n} \vert \Delta u (D^Au_t)^2\vert\dx \leq & \Vert\Delta u\Vert_{L^{\frac{n}{2}}(\mathbb{R}^n)} \Vert D^Au_t\Vert_{L^{\frac{2n}{n-2}}(\mathbb{R}^n)}^2\leq  C \Vert \Delta u \Vert_{H^{[\frac{n}{2}+1]}(\mathbb{R}^n)} \Vert\nabla D^A u_t\Vert_{L^2(\mathbb{R}^n)}^2\\
\leq & C \sqrt{E_{\frac{m}{2}}[u]} S_{\frac{m}{2}}[u].
\end{align*}

To calculate $L_u D^A u$ we use  expression~(\ref{LuDAu}) with multi-indexes $A^{j1}$ and $A^{j2}$ satisfying~(\ref{propind}). As in the proof of Proposition~\ref{PropApEstNV1}, without loss of generality, we consider two multi-indexes $A^1$ and $A^2$ with the same properties~(\ref{propind}).
We perform two steps:
\begin{description}
\item[Step 1] we prove \begin{equation}\label{EstStep1nu}
                        \int_{\mathbb{R}^n}\vert D^{A^1}u_t\; D^{A^2}u_t\;D^Au_t\vert\dx\leq C \sqrt{E_{\frac{m}{2}}[u]} S_{\frac{m}{2}}[u],
                       \end{equation}

\item[Step 2] we prove \begin{equation}\label{EstStep2nu}\int_{\mathbb{R}^n}\vert D^{A^1}\partial_{x_i}u\; D^{A^2}\partial_{x_i}u\;D^Au_t\vert\dx\leq C \sqrt{E_{\frac{m}{2}}[u]} S_{\frac{m}{2}}[u].
\end{equation}
\end{description}

\vspace*{4pt}\noindent\textbf{Step 1.}
Thanks to  properties~(\ref{propind}) of $A_1$ and $A_2$ and to the symmetry of the general case
$$\int_{\R^n} |(D^{A_0^1}_t D_x^{(A_1^1,\ldots,A_n^1)} u_t) (D^{A_0^2}_t D_x^{(A_1^2,\ldots,A_n^2)} u_t) (D^Au_t)| \dx,$$
we divide our proof on three typical cases:
\begin{description}
\item[Case 1]$\vert A^1\vert-A^1_0\geq 0$, $A^1_0 \geq 0$, $\vert A^2\vert-A^2_0>0$ and $A^2_0>0$, $i.e.$ a non trivial presence of $D_t^{A_0^2}$ and $D_x^{(A_1^2,\ldots,A_n^2)}$ is imposed, 
\item[Case 2]$\vert A^1\vert-A^1_0= 0$, $A^1_0 > 0$, $\vert A^2\vert-A^2_0 > 0$ and $A^2_0=0$, $i.e.$ we consider the integrals of the form $\int_{\R^n} |(D^{A_0^1}_t  u_t) (D_x^{(A_1^2,\ldots,A_n^2)} u_t) (D^Au_t)| \dx,$
\item[Case 3]$\vert A^1\vert-A^1_0= 0$, $A^1_0 > 0$, $\vert A^2\vert-A^2_0=0$ and $A^2_0>0$, $i.e.$ we consider only non-trivial time derivatives $\int_{\R^n} |(D^{A_0^1}_t  u_t) (D^{A_0^2}_t  u_t) (D^Au_t)| \dx.$
\end{description}

\vspace*{4pt}\noindent\textbf{Step 1, Case 1.} By the generalized H\"{o}lder inequality with $\frac{1}{p}+\frac{1}{q}=\frac{n+2}{2n}$, we have
\begin{align*}
\int_{\mathbb{R}^n}\vert D^{A^1}u_t\; D^{A^2}u_t\;D^Au_t\vert\dx\leq & \Vert D^{A^1}u_t\Vert_{L^p(\mathbb{R}^n)}\Vert D^{A^2}u_t\Vert_{L^q(\mathbb{R}^n)}\Vert D^A u_t\Vert_{L^{\frac{2n}{n-2}}(\mathbb{R}^n)}.
\end{align*}
By the Sobolev embeddings~(\ref{injsoblp}) of $H^{m_1}\subset L^p$ and $H^{m_2}\subset L^q$ with $m_1+m_2=\frac{n}{2}-1$ and $0<m_1<\frac{n}{2}-1$, we find
\begin{align*}
\int_{\mathbb{R}^n}\vert D^{A^1}u_t\; D^{A^2}u_t\;D^Au_t\vert\dx\leq& C \Vert D^{A^1}u_t\Vert_{H^{m_1}(\mathbb{R}^n)}\Vert D^{A^2}u_t\Vert_{H^{m_2}(\mathbb{R}^n)}\Vert\nabla D^A u_t\Vert_{L^{2}(\mathbb{R}^n)},
\end{align*}
where we have also applied the Gagliardo-Nirenberg-Sobolev inequality~(\ref{EqGNS}).
Hence,
\begin{align}
\int_{\mathbb{R}^n}\vert D^{A^1}u_t & \; D^{A^2}u_t\;D^Au_t\vert\dx\nonumber\\
\leq& C\Vert \partial_t^{A^1_0} u_t\Vert_{H^{m_1+\vert A^1\vert-A^1_0}(\mathbb{R}^n)}\Vert\nabla \partial_t^{A^2_0}u_t\Vert_{H^{m_2+\vert A^2\vert-A^2_0-1}(\mathbb{R}^n)}  S_{\frac{m}{2}}[u]^{\frac{1}{2}}.\label{Ineq2et}
\end{align}
Now we are looking for $0<m_1<\frac{n}{2}-1$, such that
\begin{equation}\label{Systm1A2}
\begin{cases}
m_1+\vert A^1\vert-A^1_0\leq m-2A^1_0,\\
m_2+\vert A^2\vert-A^2_0-1\leq m-2A^2_0,
\end{cases}
\end{equation}
in order to have
\begin{equation}\label{EqToHave1}
 \Vert \partial_t^{A^1_0} u_t\Vert_{H^{m_1+\vert A^1\vert-A^1_0}(\mathbb{R}^n)}\leq \sqrt{E_{\frac{m}{2}}[u]}\quad \hbox{and}\quad \Vert\nabla \partial_t^{A^2_0}u_t\Vert_{H^{m_2+\vert A^2\vert-A^2_0-1}(\mathbb{R}^n)}\leq\sqrt{S_{\frac{m}{2}}[u]}.
\end{equation}
Since $m_2=\frac{n}{2}-1-m_1$, and by~(\ref{propind}), $|A^2|=|A|+1-|A^1|$ and $A^2_0=A_0+1-A^1_0$, system~(\ref{Systm1A2}) is equivalent to\begin{equation*}
 \begin{cases}
m_1+\vert A^1\vert+A^1_0\leq m,\\
\frac{n}{2}-1-m_1+\vert A\vert+1-\vert A^1\vert+A_0+1-A^1_0-1\leq m.
\end{cases}
\end{equation*}
The last system, thanks to  $\vert A\vert+A_0\leq m$, corresponding to the assumptions of the Proposition, is satisfied if 
$$\frac{n}{2}\leq m_1+\vert A^1\vert+A^1_0\leq m.$$
Using~(\ref{propind}), we find that $$\vert A^1\vert+A^1_0=|A|+A_0+2-(|A^2|+A_0^2).$$
Therefore, since for Case~1 $|A^2|\ge 2$ and $A_0^2\ge 1$, recalling that (again by~(\ref{propind})) $|A|+A_0\le m$, we obtain
$$ 1\leq\vert A^1\vert+A^1_0\le m-1.$$

Thus, we distinguish three sub-cases:

\begin{description}
\item[For $n\ge 3$,  $\frac{n}{2} \leq\vert A^1\vert+A^1_0 \leq m-1$] taking $m_1=\frac{1}{4}$, we obtain~(\ref{EqToHave1}).
\item[For $n\ge 5$,  $2 \leq \vert A^1\vert+A^1_0 < \frac{n}{2}$] as $m\ge \left[\frac{n}{2}+3\right]$, it is sufficient to take $m_1= \frac{n}{2}- (\vert A^1\vert+A^1_0 )$.
  \item[For $n\ge 3$,  $\vert A^1\vert+A^1_0=1$] instead of finding $m_1$, we notice, that we have only two possibility: either $D^{A^1}=\partial_t$  and $A^2=A$, which gives  estimate~(\ref{EqEstuttnu}), or $D^{A^1}=\partial_{x_i} $ with $A^2_0=A_0+1$ and $\vert A^2\vert-A^2_0=\vert A\vert-A_0-1>0$. For the last case,  by the generalized H\"{o}lder inequality, we have
\begin{align}\label{EqEstuxinu}
\int_{\mathbb{R}^n}\vert \partial_{x_i} u_t\;D^{A^2}u_t\;D^Au_t\vert dx\leq \Vert \partial_{x_i} u_t\Vert_{L^n(\mathbb{R}^n)} \Vert D^{A^2}u_t\Vert_{L^2(\mathbb{R}^n)} \Vert D^Au_t\Vert_{L^{\frac{2n}{n-2}}(\mathbb{R}^n)}.
\end{align}
For $m\geq \left[\frac{n}{2}+3\right]$ the first norm in Eq.~(\ref{EqEstuxinu}) can be estimated using the continuous embedding $H^s(\mathbb{R}^n)\subset L^n (\R^n)$ holding for $s>\frac{n}{2}$: 
$$\Vert \partial_{x_i} u_t\Vert_{L^n(\mathbb{R}^n)}\leq C\Vert \partial_{x_i} u_t\Vert_{H^{[\frac{n}{2}+1]}(\mathbb{R}^n)}\leq C \Vert u_t\Vert_{H^{m-1}(\mathbb{R}^n)}\leq C\sqrt{E_{\frac{m}{2}}[u]}  . $$
With the help of the Gagliardo-Nirenberg-Sobolev inequality (\ref{EqGNS}), we also estimate the second norm in~(\ref{EqEstuxinu})
\begin{equation}\label{EstDAutLdrob}
 \Vert D^Au_t\Vert_{L^{\frac{2n}{n-2}}(\mathbb{R}^n)}\leq C \Vert \nabla D^Au_t\Vert_{L^2(\mathbb{R}^n)}\leq C \sqrt{S_{\frac{m}{2}}[u]} ,
\end{equation}
and for the last one we directly have
$$\Vert D^{A^2}u_t\Vert_{L^2(\mathbb{R}^n)}\leq \Vert \nabla \partial_t^{A_0+2} u\Vert_{H^{\vert A\vert-A_0-2}(\mathbb{R}^n)}\leq \Vert \nabla \partial_t^{A_0+2} u\Vert_{H^{m-2 A_0-2}(\mathbb{R}^n)}\leq \sqrt{S_{\frac{m}{2}}[u]}.$$
Thus we obtain as previously estimate~(\ref{EstStep1nu}) of Step~1.
  \end{description}
This permits to  conclude Case 1 of Step~1.

\vspace*{4pt}\noindent\textbf{Step 1, Case 2.} We have $\vert A^1\vert-A^1_0= 0$, $A^1_0 > 0$, $\vert A^2\vert-A^2_0 > 0$ and $A^2_0=0$. Therefore, by~(\ref{propind}), $A_0^1=1+A_0$, and, updating~(\ref{Ineq2et}), we directly have
\begin{align*}
\int_{\mathbb{R}^n}\vert D^{A^1_0}_tu_t\; D^{(A^2_1,\ldots ,A^2_n)}_x u_t\;D^Au_t\vert\dx\leq & C\Vert \partial_t^{A_0+1} u_t\Vert_{H^{m_1}(\mathbb{R}^n)}\Vert\nabla u_t\Vert_{H^{m_2+\vert A^2\vert-1}(\mathbb{R}^n)}  S_{\frac{m}{2}}[u]^{\frac{1}{2}}
\end{align*}
with $m_1+m_2=\frac{n}{2}-1$, $0<m_1<\frac{n}{2}-1$. Now we need to find $m_1$, belonging to $]0,\frac{n}{2}-1[$, such that
\begin{equation}\label{Sysm1S1C2nu}
\begin{cases}
m_1 \leq m-2(A_0+1),\\
m_2+\vert A^2\vert-1 \leq m,
\end{cases}
\end{equation}
in order to have
$$\Vert \partial_t^{A_0+1} u_t\Vert_{H^{m_1}(\mathbb{R}^n)}\leq \sqrt{E_{\frac{m}{2}}[u]}\quad \hbox{and} \quad\Vert\nabla u_t\Vert_{H^{m_2+\vert A^2\vert-1}(\mathbb{R}^n)}\leq\sqrt{S_{\frac{m}{2}}[u]}.$$
From  $1+|A|=|A^1|+|A^2|$, by~(\ref{propind}), with the relation $|A^1|=A_0^1=1+A_0$ it follows that
\begin{equation}\label{EqindA2C12}
  \vert A^2\vert=\vert A\vert-A_0 .
          \end{equation}
 Therefore, as $m_2=\frac{n}{2}-m_1-1$, system~(\ref{Sysm1S1C2nu}) is equivalent to
$$
\begin{cases}
m_1+2 A_0 \leq m-2,\\
\frac{n}{2}-2\leq m_1+m-\vert A\vert+A_0.
\end{cases}
$$
By the assumption of the proposition
\begin{equation}\label{EqindmC12}
       m-\vert A\vert+A_0\geq 2A_0,
      \end{equation}
 hence the last system is satisfied if we have $m_1$ such that
$$\frac{n}{2}-2\leq m_1 +2A_0 \leq m-2.$$
Knowing that  $\vert A^2\vert>0$ (by the assumption of Case 2), Eq.~(\ref{EqindA2C12}) implies that
$\vert A\vert-A_0>0$. Thus,  relation~(\ref{EqindmC12}) gives $2A_0\leq m-1$, or more precisely $$2A_0\leq m-2,$$ since $m$ is even.
So, a $m_1$ with $0<m_1<\frac{n}{2}-1$ exists if $m-2A_0>2$.
Indeed, if $2A_0<\frac{n}{2}-2$
we can take $m_1=\frac{n}{2}-2-2A_0$,
and if $m-3\geq 2A_0\geq\frac{n}{2}-2$ we can take $m_1=\frac{1}{2}$.

Let us now consider the limit case $2A_0=m-2$. Then we have $\vert A^1\vert=A^1_0=\frac{m}{2}$. Moreover, from~(\ref{EqindmC12}) viewed, thanks to Eq.~(\ref{EqindA2C12}), as $|A^2|+2A_0\le m$, follows that $1\leq \vert A^2\vert\leq 2$. We apply the generalized H\"{o}lder inequality and estimate~(\ref{EqGNS}) to obtain
\begin{align*}
\int_{\mathbb{R}^n}\vert \partial_{t}^{\frac{m}{2}} u_t\;D^{(A^2_1,\ldots,A^2_n)}_xu_t\;D^Au_t\vert dx\leq & \Vert \partial_{t}^{\frac{m}{2}} u_t\Vert_{L^2(\mathbb{R}^n)} \Vert D^{(A^2_1,\ldots,A^2_n)}_x u_t\Vert_{L^n(\mathbb{R}^n)} \Vert D^Au_t\Vert_{L^{\frac{2n}{n-2}}(\mathbb{R}^n)}\\
\leq & C \Vert \partial_{t}^{\frac{m}{2}} u_t\Vert_{L^2(\mathbb{R}^n)} \Vert D^{(A^2_1,\ldots,A^2_n)}_xu_t\Vert_{L^n(\mathbb{R}^n)} \sqrt{S_{\frac{m}{2}}[u]}.
\end{align*}
 Moreover,
$$\Vert \partial_{t}^{\frac{m}{2}} u_t\Vert_{L^2(\mathbb{R}^n)}\leq \sqrt{E_{\frac{m}{2}}[u]}.$$
Using the  continuity of the embedding $H^s(\mathbb{R}^n)\subset L^n (\R^n)$ for $s>\frac{n}{2}$, we also find  for $m\geq \left[\frac{n}{2}+3\right]$
\begin{align*}
\Vert D^{A^2}u_t\Vert_{L^n(\mathbb{R}^n)}\leq & C\Vert D^{A^2}u_t\Vert_{H^{[\frac{n}{2}+1]}(\mathbb{R}^n)}\leq C \Vert \nabla u_t\Vert_{H^{[\frac{n}{2}+2]}(\mathbb{R}^n)}\\
\leq& C\Vert \nabla u_t\Vert_{H^{m}(\mathbb{R}^n)} \leq C \sqrt{S_{\frac{m}{2}}[u]}.
\end{align*}
Hence, estimate~(\ref{EstStep1nu}) of Step~1 is also proved for Case~2.

\vspace*{4pt}\noindent\textbf{Step 1, Case 3.}
Let us notice that thanks to relations~(\ref{propind}), from $|A^1|=A_0^1$ and $|A^2|=A_0^2$ it follows $|A|=A_0$.
We start as usual with the generalized H\"{o}lder inequality
\begin{align*}
\int_{\mathbb{R}^n}\vert D^{A^1_0}_tu_t\; D^{A^2_0}_tu_t\;D^{A_0}_tu_t\vert\dx\leq & \Vert D^{A^1_0}_tu_t\Vert_{L^p(\mathbb{R}^n)}\Vert D^{A^2_0}_tu_t\Vert_{L^q(\mathbb{R}^n)}\Vert D^{A_0}_tu_t\Vert_{L^{\frac{2n}{n-2}}(\mathbb{R}^n)}
\end{align*}
with $\frac{1}{p}+\frac{1}{q}=\frac{n+2}{2n}$. Then we apply the Gagliardo-Nirenberg-Sobolev inequality~(\ref{EqGNS}) and its more general version, which can be viewed as the embedding of the Sobolev space $W^1_{q^*}(\R^n)$ in the Lebesgue space $L^q(\R^n)$ with $\frac{1}{q}=\frac{1}{q^*}- \frac{1}{n}$ and $1\leq q^*<n$:
\begin{align*}
\int_{\mathbb{R}^n}\vert D^{A^1_0}_tu_t\; D^{A^2_0}_tu_t\;D^{A_0}_t u_t\vert\dx\leq & C \Vert D^{A^1_0}_tu_t\Vert_{L^p(\mathbb{R}^n)}\Vert \nabla D^{A^2_0}_tu_t\Vert_{L^{q^*}(\mathbb{R}^n)}\Vert\nabla D^{A_0}_tu_t\Vert_{L^{2}(\mathbb{R}^n)}
\end{align*}
with  $\frac{1}{p}+\frac{1}{q^*}=\frac{n+4}{2n}$.
We notice that if we want to use the Sobolev embeddings~(\ref{injsoblp}) to $L^p$ and to $L^{q^*}$, it is only possible if $\frac{1}{p}$ and $\frac{1}{q^*}$ are smaller then $\frac{1}{2}$, or equivalently, if $\frac{1}{p}+\frac{1}{q^*}=\frac{n+4}{2n}<1$.  Knowing that  $\frac{n+4}{2n}<1$ for  $n\ge 5$,  $\frac{n+4}{2n}> 1$ for $n=3$ and $\frac{n+4}{2n}= 1$ for $n=4$, we  treat   separately two cases: $n\geq 5$ and $n=3$ or $4$.

For $n=3$ or $4$, we choose
 $p=\frac{n}{2}$ and $q=\frac{2n}{n-2}$, implying $q^*=2$. Thus, for $n=3$ we use the continuous embedding
 $H^2(\R^3)\subset L^{\frac{3}{2}}(\R^3)$~\cite{ADAMS-1975} (since $2>\frac{3}{2}$) and for $n=4$ we use $H^2(\R^4)\subset L^{2}(\R^4)$ to obtain
\begin{align*}
\int_{\mathbb{R}^n}\vert D^{A^1_0}_tu_t\; D^{A^2_0}_tu_t\;D^{A_0}_t u_t\vert\dx\leq & \Vert D^{A^1_0}_tu_t\Vert_{L^{\frac{n}{2}}(\mathbb{R}^n)}\Vert\nabla D^{A^2_0}_tu_t\Vert_{L^{2}(\mathbb{R}^n)}\Vert\nabla D^{A_0}_t u_t\Vert_{L^{2}(\mathbb{R}^n)}\\
\leq & C \Vert D^{A^1_0}_tu_t\Vert_{H^2(\mathbb{R}^n)} S_{\frac{m}{2}}[u].
\end{align*}
If $m-2A_0^1\ge 2$, then we directly  have
$$\Vert D^{A^1_0}u_t\Vert_{H^2(\mathbb{R}^n)}\leq \Vert D^{A^1_0}u_t\Vert_{H^{m-2A_0^1}(\mathbb{R}^n)}\leq \sqrt{E_{\frac{m}{2}}[u]}.$$
Recalling that $m$ is even, and, by  our assumption $|A^1|+A_0^1\le m$, $2A_0^1\le m$, there is  only one additional possibility: $m-2A_0^1=0$, $i.e.$ $A_0^1=\frac{m}{2}$.

For $A^1_0=\frac{m}{2}$, thanks to~(\ref{propind}) and the assumption $2A_0\le m$, we necessary have $\vert A^2_0\vert=1$,   and consequently, by~(\ref{EstDAutLdrob}),
\begin{align*}
\int_{\mathbb{R}^n}\vert \partial_t^{\frac{m}{2}}u_t\; u_{tt}\;\partial_t^{\frac{m}{2}}u_t\vert\dx
 \leq C \Vert u_{tt}\Vert_{H^2(\mathbb{R}^n)} \Vert \partial_t^{\frac{m}{2}}u_t\Vert_{L^{\frac{2n}{n-2}}(\mathbb{R}^n)}^2
\leq  \sqrt{E_{\frac{m}{2}}[u]}S_{\frac{m}{2}}[u].
\end{align*}
Thus for $n=3$ and $n=4$ we find estimate~(\ref{EstStep1nu}).

Now, for $n\geq 5$, when $\frac{1}{p}+\frac{1}{q^*}=\frac{n+4}{2n}<1$,   we have
\begin{align*}
\int_{\mathbb{R}^n}\vert D^{A^1_0}_tu_t\; D^{A^2_0}_tu_t\;D^{A_0}_tu_t\vert\dx\leq & C \Vert D^{A^1_0}_tu_t\Vert_{L^p(\mathbb{R}^n)}\Vert \nabla D^{A^2_0}_tu_t\Vert_{L^{q^*}(\mathbb{R}^n)}\Vert\nabla D^{A_0}_t u_t\Vert_{L^{2}(\mathbb{R}^n)}\\
\leq & C \Vert D^{A^1_0}_tu_t\Vert_{H^{m_1}(\mathbb{R}^n)} \Vert\nabla D^{A^2_0}_tu_t\Vert_{H^{m_2}(\mathbb{R}^n)} \sqrt{S_{\frac{m}{2}}[u]}
\end{align*}
with $m_1+m_2=\frac{n}{2}-2$ and $0<m_1<\frac{n}{2}-2$ by the Sobolev embeddings~(\ref{injsoblp}) which give us $H^{m_1}\subset L^p$ and $H^{m_2}\subset L^{q^*}$.
We look for $m_1$ such that
\begin{equation}\label{Eqsys}
 m_1 \leq m-2A^1_0, \quad m_2 \leq m-2A^2_0
\end{equation}
in order to have
$$\Vert D^{A^1_0}_t u_t\Vert_{H^{m_1}}\leq \sqrt{E_{\frac{m}{2}}[u]}\quad \hbox{and} \quad\Vert\nabla D^{A^2_0}_tu_t\Vert_{H^{m_2}}\leq\sqrt{S_{\frac{m}{2}}[u]}.$$
As $m_2=\frac{n}{2}-2-m_1$ and $ A^2_0= A_0+1-A^1_0$, system~(\ref{Eqsys})  is equivalent to
$$
\begin{cases}
m_1 +2A^1_0 \leq m,\\
\frac{n}{2}-2 \leq m-2A_0+m_1+2A^1_0-2.
\end{cases}
$$
As $m-2 A_0\geq 0$, it is sufficient to have $m_1$ such that
$$\frac{n}{2}\leq m_1+2 A^1_0\leq m$$
with $0<m_1<\frac{n}{2}-2$ and $1\leq A^1_0\leq \frac{m}{2}$. 
If $2\leq A^1_0<\frac{n}{4}$ we can take $m_1= \frac{n}{2}-2 A^1_0$. And if $\frac{n}{4}\leq A^1_0\leq\frac{m}{2}-1$ we can take $m_1=\frac{1}{4}$.

If $A^1_0=1$, then necessary $A^2_0=A_0$, and using estimates~(\ref{EstuttLn2}) and~(\ref{EstDAutLdrob}) we directly find
\begin{align*}
\int_{\mathbb{R}^n}\vert u_{tt}\; (D^{A_0}_tu_t)^2\vert\dx\leq & C \Vert u_{tt}\Vert_{L^{\frac{n}{2}}(\mathbb{R}^n)}\Vert  D^{A^2_0}_tu_t\Vert_{L^{\frac{2n}{n-2}}(\mathbb{R}^n)}^2\leq C \sqrt{E_{\frac{m}{2}}[u]} S_{\frac{m}{2}}[u].
\end{align*}

If  $A^1_0=\frac{m}{2}$ we are in a symmetric case as $A^2_0=1$.
This conclude the proof of Case 3 and of  Step 1, $i.e.$ of estimate~(\ref{EstStep1nu}).

\vspace*{4pt}\noindent\textbf{Step 2.} Let us show estimate~(\ref{EstStep2nu}).
Thanks to  properties~(\ref{propind}) of $A_1$ and $A_2$ and to the symmetry of the general case
$$\int_{\R^n} |(D^{A_0^1}_t D_x^{(A_1^1,\ldots,A_n^1)} u_{x_i}) (D^{A_0^2}_t D_x^{(A_1^2,\ldots,A_n^2)} u_{x_i}) (D^Au_t)| \dx,$$
we divide our proof on two typical cases:

\begin{description}
\item[Case 1]$\vert A^1\vert-A^1_0\geq 0$, $A^1_0 > 0$, $\vert A^2\vert-A^2_0 \geq 0$ and $A^2_0>0$, $i.e.$
a non trivial presence of $D^{A_0^1}_t$ and $D^{A_0^2}_t$ is imposed,
\item[Case 2]$\vert A^1\vert-A^1_0> 0$, $A^1_0 = 0$, $\vert A^2\vert-A^2_0 \geq 0$ and $A^2_0> 0$, $i.e.$
we consider the integrals of the form $\int_{\R^n} |( D_x^{A_1^1+\ldots+A_n^1} u_{x_i}) (D^{A_0^2}_t D_x^{A_1^2+\ldots+A_n^2} u_{x_i}) (D^Au_t)| \dx$ with a non-trivial $D^{A_0^2}_t$.
\end{description}

\vspace*{4pt}\noindent\textbf{Case 1.} Using estimate $\Vert D^Au_t\Vert_{L^2}\le \sqrt{E_{\frac{m}{2}}[u]}$, we have
\begin{align*}
&\int_{\R^n} |(D^{A_0^1}_t D_x^{(A_1^1,\ldots,A_n^1)} u_{x_i}) (D^{A_0^2}_t D_x^{(A_1^2,\ldots,A_n^2)} u_{x_i}) (D^Au_t)| \dx \\
&\leq C \Vert\nabla \partial_t^{A^1_0} u\Vert_{H^{m_1+\vert A^1\vert-A^1_0}(\mathbb{R}^n)} \Vert \nabla \partial_t^{A^2_0} u\Vert_{H^{m_2+\vert A^2\vert-A^2_0}(\mathbb{R}^n)} \sqrt{E_{\frac{m}{2}}[u]}
\end{align*}
with $m_1+m_2=\frac{n}{2}$ and $0<m_1<\frac{n}{2}$.

Let us find $m_1$ with $0<m_1<\frac{n}{2}$ such that
\begin{equation}\label{systmS2v}
 \begin{cases}
m_1+\vert A^1\vert-A^1_0 \leq m-2(A^1_0-1),\\
m_2+\vert A^2\vert-A^2_0 \leq  N-2(A^2_0-1)
\end{cases}
\end{equation}
in order to have
$$\Vert\nabla \partial_t^{A^1_0} u\Vert_{H^{m_1+\vert A^1\vert-A^1_0}(\mathbb{R}^n)}\leq \sqrt{S_{\frac{m}{2}}[u]}\quad \hbox{and} \quad\Vert \nabla \partial_t^{A^2_0} u\Vert_{H^{m_2+\vert A^2\vert-A^2_0}(\mathbb{R}^n)}\leq\sqrt{S_{\frac{m}{2}}[u]}.$$
As $m_2=\frac{n}{2}-m_1$, $\vert A^1\vert+\vert A^2\vert=\vert A\vert+1$, and $A^1_0+A^2_0=A_0+1$, system~(\ref{systmS2v}) is equivalent to $$
\begin{cases}
m_1+\vert A^1\vert+A^1_0 \leq m+2,\\
\frac{n}{2}+\vert A\vert+A_0 +2\leq  m+2 +m_1+\vert A^1\vert+A^1_0.
\end{cases}
$$
By our assumption $\vert A\vert +A_0\leq m $, and hence the last system  is satisfied if  $m_1$ verifies
$$\frac{n}{2}\leq m_1+\vert A^1\vert+A^1_0\leq m+2.$$
In our case $A_0^1>0$, thus $2\leq \vert A^1\vert+A^1_0\leq m$, which implies the existence of a such $m_1$ with $0<m_1<\frac{n}{2}$. Indeed, if $m\geq \vert A^1\vert+A^1_0\geq \frac{n}{2}$ we can take $m_1=1$, else if $2\leq \vert A^1\vert+A^1_0<\frac{n}{2}$ it is possible to take $m_1=\frac{n}{2}-(\vert A^1\vert+A^1_0)$. This concludes Case 1 of Step 2.

\vspace*{4pt}\noindent\textbf{Case 2.}
Thanks to~(\ref{propind}), the conditions $|A^1|>0$ with $A_0^1=0$ imply that $\vert A\vert-A_0>0$. Consequently, with $m_1+m_2=\frac{n}{2}$ and $0<m_1<\frac{n}{2}$ as in the previous case, we obtain
\begin{align*}
\int_{\mathbb{R}^n}\vert D^{A^1}_x\partial_{x_i} u & \; D^{A^2}\partial_{x_i} u\;D^Au_t\vert\dx\\
\leq & C \Vert \nabla u\Vert_{H^{m_1+\vert A^1\vert}(\mathbb{R}^n)} \Vert \nabla \partial_t^{A^2_0} u\Vert_{H^{m_2+\vert A^2\vert-A^2_0}(\mathbb{R}^n)}\Vert \nabla \partial_t^{A_0} u_t\Vert_{H^{\vert A\vert -A_0-1}(\mathbb{R}^n)}\\
\leq&  C \Vert \nabla u\Vert_{H^{m_1+\vert A^1\vert}(\mathbb{R}^n)} \Vert \nabla \partial_t^{A^2_0} u\Vert_{H^{m_2+\vert A^2\vert-A^2_0}(\mathbb{R}^n)} \sqrt{S_{\frac{m}{2}}[u]}.
\end{align*}
In the aim to have
$$\Vert \nabla u\Vert_{H^{m_1+\vert A^1\vert}(\mathbb{R}^n)}\leq \sqrt{E_{\frac{m}{2}}[u]}\quad \hbox{and} \quad\Vert \nabla \partial_t^{A^2_0} u\Vert_{H^{m_2+\vert A^2\vert-A^2_0}(\mathbb{R}^n)}\leq\sqrt{S_{\frac{m}{2}}[u]},$$
we need to find $m_1$ with $0<m_1<\frac{n}{2}$, such that
$$
\begin{cases}
m_1+\vert A^1\vert \leq m,\\
m_2+\vert A^2\vert-A^2_0 \leq  m-2(A^2_0-1).
\end{cases}
$$
As $m_2=\frac{n}{2}-m_1$, $\vert A^2\vert=\vert A\vert +1-\vert A^1\vert$ and $A^2_0=A_0+1$ it is equivalent to solve
$$
\begin{cases}
m_1+\vert A^1\vert \leq m,\\
\frac{n}{2}-m_1+\vert A\vert+1-\vert A^1\vert +A_0+1-2 \leq  m.
\end{cases}
$$
As $m-\vert A\vert-A_0\geq 0$, the last system is satisfied if $m_1$ verifies
$$\frac{n}{2}\leq m_1+\vert A^1\vert\leq m. $$
By assumptions of this case it tolds $1\leq \vert A^1\vert\leq m$, what guarantees the existence of such $m_1$ with $0<m_1<\frac{n}{2}$. Indeed, if $1\leq \vert A^1\vert < \frac{n}{2}$, then we can take $m_1= \frac{n}{2}- \vert A^1\vert$, and if  $\frac{n}{2}\leq\vert A^1\vert\leq m-1$, then we can take $m_1=\frac{1}{2}$.
In the case $\vert A^1\vert=m$, corresponding to $D^{A^2}=\partial_t$, we directly obtain
\begin{align*}
\int_{\mathbb{R}^n}&\vert D^{A^1}_x\partial_{x_i} u\;\partial_{x_i} u_t\;D^Au_t\vert\dx
\leq  C\Vert D^{A^1}_x\partial_{x_i} u\Vert_{L^2(\mathbb{R}^n)} \Vert \partial_{x_i} u_t\Vert_{L^n(\mathbb{R}^n)} \Vert D^A u_t\Vert_{L^{\frac{2n}{n-2}}(\mathbb{R}^n)}\\
&\leq  C \Vert \nabla u\Vert_{H^m} \Vert \nabla u_t\Vert_{H^m(\mathbb{R}^n)} \Vert\nabla D^A u_t\Vert_{L^{2}(\mathbb{R}^n)}
\leq  C \sqrt{E_{\frac{m}{2}}[u]}S_{\frac{m}{2}}[u].
\end{align*}
This completes the proof of Step 2 and hence the proof of estimate~(\ref{EstStep2nu}).

Thus, estimates~(\ref{EstStep1nu}) and~(\ref{EstStep2nu}) imply
$$\left\vert \int_{\mathbb{R}^n} L_u D^Au D^Au_t\dx\right\vert  \leq C \max(\alpha,\beta) \varepsilon \sqrt{E_{\frac{m}{2}}[u]}S_{\frac{m}{2}}[u],$$
from where follows~(\ref{EqEstJnu}).
\end{proof}

Thanks to Lemma~\ref{PropEnergiVisc}, we have the following energy decreasing result: 
\begin{theorem}\label{ThDecrEnnu}
Let $n\geq 3$, $m\in\mathbb{N}$ be even and $m\geq \left[\frac{n}{2}+3\right]$. For $u_0\in H^{m+1}(\mathbb{R}^n)$ and $u_1\in H^m(\mathbb{R}^n)$, satisfying the smallness condition according to Point~1 of Theorem~\ref{ThMainWPnuPGlob}, there exists a unique global solution 
$$u\in C^1(\R^+;H^{m-1}(\mathbb{R}^n))\cap C(\R^+;H^{m}\break(\mathbb{R}^n))$$ 
of  problem~(\ref{kuz})--(\ref{ci}) and the energy $E_{\frac{m}{2}}[u](0)<\infty$ is well-defined. Then
\begin{enumerate}
 \item it holds the a priori estimate
 \begin{equation}\label{EstAPKuzn}
  \frac{d}{dt}E(t)+\sqrt{2}\varepsilon S_{\frac{m}{2}}[u](t)\left(\sqrt{2}\nu-  C_m \max(\alpha,\beta) \sqrt{E(t)}\right)\le 0,
 \end{equation}
 where, denoting by $V$ the set of all multi-indexes $A=(A_0,A_1,...,A_n)$ with $\vert A\vert-A_0 \leq m-2A_0$,
$$E(t)=\sum_{A\in V} \int_{\mathbb{R}^n} (1-\alpha\varepsilon u_t) (D^A u_t)^2+c^2 (\nabla D^A u)^2)(t,x)\;\dx.$$
\item if in addition $\sqrt{E_{\frac{m}{2}}[u](0)}\leq \frac{\sqrt{2}\nu}{ \sqrt{\frac{3}{2}+c^2}C_m \max(\alpha,\beta)  }=O(\sqrt{\eps}),$ then
\begin{equation}\label{EqEm2Bound}
 \forall t\in \mathbb{R}^+,\;\;E_{\frac{m}{2}}[u](t)\leq (3+2c^2)E_{\frac{m}{2}}[u](0)=O(1).
\end{equation}
\end{enumerate}
\end{theorem}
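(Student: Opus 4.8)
The plan is to deduce both points directly from the per-multi-index estimate~(\ref{apglobalindA}) of Lemma~\ref{PropEnergiVisc} by summing over the index set $V$ and then running a continuity argument on the resulting differential inequality.

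First I would prove Point~1. Summing~(\ref{apglobalindA}) over all $A\in V$, the first term reproduces $\frac{d}{dt}E(t)$ by the very definition of $E(t)$, while the dissipative term is, by definition~(\ref{EnSm2}) of $S_{\frac m2}$,
\begin{equation*}
2\nu\varepsilon\sum_{A\in V}\int_{\mathbb{R}^n}(\nabla D^A u_t)^2\,\dx=2\nu\varepsilon\,S_{\frac m2}[u](t),
\end{equation*}
since $D^A u_t=\partial_t^{A_0+1}D_x^{(A_1,\ldots,A_n)}u$ and the constraint $|A|-A_0\le m-2A_0$ defining $V$ is exactly the constraint $|\gamma|\le m-2(i-1)$ appearing in $S_{\frac m2}$ once one sets $i=A_0+1$. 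The right-hand sides add up to $C_m\max(\alpha,\beta)\varepsilon\sqrt{E_{\frac m2}[u]}\,S_{\frac m2}[u]$ after absorbing the cardinality $|V|$ into $C_m$. It then remains to replace $\sqrt{E_{\frac m2}}$ by $\sqrt{E}$: using the hyperbolicity bound $1-\alpha\varepsilon u_t\ge\frac12$ (guaranteed by the smallness of the data, see the remark after Theorem~\ref{ThMainWPnuPGlob}) together with $c^2\ge\frac12$, one gets the coercivity estimate $E(t)\ge\frac12 E_{\frac m2}[u](t)$, hence $\sqrt{E_{\frac m2}}\le\sqrt2\,\sqrt{E}$. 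Substituting and regrouping yields exactly~(\ref{EstAPKuzn}).

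For Point~2 I would first record the matching upper bound. Keeping $1-\alpha\varepsilon u_t\le\frac32$ and splitting $E$ into its $\sum_{A\in V}\Vert D^A u_t\Vert_{L^2}^2$ part (which equals the time-derivative part of $E_{\frac m2}$) and its $\sum_{A\in V}\Vert\nabla D^A u\Vert_{L^2}^2$ part (bounded by $E_{\frac m2}$, the $A_0=0$ terms giving $\Vert\nabla u\Vert_{H^m}^2$ and the $A_0\ge1$ terms being controlled by $\Vert\partial_t^{A_0}u\Vert_{H^{m-2(A_0-1)}}^2$), one obtains $E(0)\le(\frac32+c^2)E_{\frac m2}[u](0)$. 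The smallness hypothesis $\sqrt{E_{\frac m2}[u](0)}\le\frac{\sqrt2\,\nu}{\sqrt{\frac32+c^2}\,C_m\max(\alpha,\beta)}$ then gives precisely $E(0)\le K$ with $K:=\bigl(\tfrac{\sqrt2\,\nu}{C_m\max(\alpha,\beta)}\bigr)^2$, the threshold at which the bracket $\sqrt2\nu-C_m\max(\alpha,\beta)\sqrt{E}$ in~(\ref{EstAPKuzn}) vanishes.

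The conclusion follows by a standard continuity argument on~(\ref{EstAPKuzn}): whenever $E(t)\le K$ the bracket is nonnegative and $S_{\frac m2}\ge0$, so $\frac{d}{dt}E(t)\le0$; since $E(0)\le K$ and $E$ is continuous, $E$ can never cross the level $K$ from below, hence $E(t)\le E(0)$ for all $t\in\mathbb{R}^+$. Combining with the coercivity $E_{\frac m2}[u](t)\le 2E(t)$ and the bound at $t=0$ gives
\begin{equation*}
E_{\frac m2}[u](t)\le 2E(t)\le 2E(0)\le 2\Bigl(\tfrac32+c^2\Bigr)E_{\frac m2}[u](0)=(3+2c^2)E_{\frac m2}[u](0),
\end{equation*}
which is~(\ref{EqEm2Bound}); it is $O(1)$ because $c^2=O(1/\varepsilon)$ while the smallness forces $E_{\frac m2}[u](0)=O(\varepsilon)$. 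The genuinely hard analytic content (the nonlinear commutator estimates) is already packaged in Lemma~\ref{PropEnergiVisc}; here the only delicate points are bookkeeping—pinning down the comparison constants $\frac12$, $\frac32+c^2$ and the factor $\sqrt2$ exactly so that the bootstrap threshold matches the stated smallness condition, and making sure the sum of the dissipation terms is \emph{identified} with $S_{\frac m2}$ rather than merely comparable to it.
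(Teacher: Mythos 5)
Your proposal is correct and follows essentially the same route as the paper's own proof: sum Lemma~\ref{PropEnergiVisc} over $V$, use the equivalence $\frac{1}{2}E_{\frac{m}{2}}[u](t)\le E(t)\le(\frac{3}{2}+c^2)E_{\frac{m}{2}}[u](t)$ valid under the hyperbolicity condition to replace $\sqrt{E_{\frac{m}{2}}}$ by $\sqrt{2}\sqrt{E}$, and then run a decrease/continuity argument below the threshold $\bigl(\tfrac{\sqrt{2}\nu}{C_m\max(\alpha,\beta)}\bigr)^2$ to obtain the factor $2(\tfrac{3}{2}+c^2)=3+2c^2$. The only (minor) divergence is that the paper closes the bootstrap on the hyperbolicity bound itself, re-deriving $\Vert u_t(t)\Vert_{L^{\infty}}\le C_{\infty}\sqrt{2}\sqrt{E(0)}<\frac{1}{2\alpha\varepsilon}$ from the physical scalings $c^2=O(1/\varepsilon)$, $\nu=O(1)$, whereas you import that bound from the remark following Theorem~\ref{ThMainWPnuPGlob}; both are legitimate under the stated hypotheses.
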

\begin{proof}
We sum~(\ref{apglobalindA}) on all  $A\in V$ to obtain
 $$\frac{d}{dt}E(t)+2\nu \varepsilon S_{\frac{m}{2}}[u]\leq C_m \max(\alpha,\beta) \varepsilon \sqrt{E_{\frac{m}{2}}[u]}S_{\frac{m}{2}}[u].$$
 While $\Vert u_t(t)\Vert_{L^{\infty}(\mathbb{R}^n)}\leq \frac{1}{2\alpha\varepsilon}$ it holds
 $$\frac{1}{2} E_{\frac{m}{2}}[u](t)\leq E(t) \leq (\frac{3}{2}+c^2)E_{\frac{m}{2}}[u](t),$$
and consequently,
 $$ \frac{d}{dt}E(t)+2\nu\varepsilon S_{\frac{m}{2}}[u](t)\leq \sqrt{2} C_m \max(\alpha,\beta) \varepsilon \sqrt{E(t)} S_{\frac{m}{2}}[u](t).$$
 Thus, if for all time $\sqrt{E(t)}< \frac{\sqrt{2}\nu}{\max(\alpha,\beta) C_m  }$, and in particular,
 \begin{equation}\label{EqE0petit}
   E(0)\leq \left(\frac{3}{2}+c^2\right) E_{\frac{m}{2}}[u](0)<2 \left(\frac{\nu}{ C_m \max(\alpha,\beta)  }\right)^2,
 \end{equation}
  then we have the decreasing of $E$ in time:
  $$\frac{d}{dt}E(t)<0\quad \hbox{and} \quad  E(t)\leq E(0).$$
 Moreover,  for all time $t\ge 0$
 \begin{align*}
 \Vert u_t(t)\Vert_{L^{\infty}(\mathbb{R}^n)}\leq& C_{\infty} \sqrt{E_{\frac{m}{2}}[u](t)}
 \leq  C_{\infty} \sqrt{2}\sqrt{E(t)}
 \leq  C_{\infty} \sqrt{2}\sqrt{E(0)}\\
 < & 2C_{\infty} \frac{\nu}{C_m \max(\alpha,\beta)}
 < \frac{1}{2\alpha\varepsilon}.
 \end{align*}
 To be able to write $2C_{\infty} \frac{\nu}{C_m \max(\alpha,\beta)}
 < \frac{1}{2\alpha\varepsilon}$, we recall that, using the physical values of coefficients, $\eps\ll 1$, $c^2=O(\frac{1}{\eps})$, $\alpha=\frac{\gamma-1}{c^2}<\beta=2$, and consequently, as $\nu=O(1)$, the last inequality becomes
 $$\frac{C_\infty}{C_m}\nu<\frac{1}{2\alpha \eps},$$
 which is obviously true in the case of $\eps\ll 1$ (and, for instance, taking $C_m=2C_\infty$).
 Hence, if Eq.~(\ref{EqE0petit}) holds, then  for all time $\Vert u_t(t)\Vert_{L^{\infty}}< \frac{1}{2\alpha\varepsilon}$ and the well-posedness of the Cauchy problem is ensured with the following energy estimate
 $$E_{\frac{m}{2}}[u](t)\leq 2 E(0)\leq (3+2c^2)E_{\frac{m}{2}}[u](0).$$
 \end{proof}

\appendix
\section{Proof of Proposition~\ref{PropApEstNV1}}\label{Appen1}

 Following~\cite{John}, let us consider
\begin{equation}\label{EqLuv}
 L_u v= v_{tt}-c^2\Delta v-\alpha\varepsilon u_t  v_{tt}-\beta \varepsilon \nabla u \;\nabla v_t,
\end{equation}
where $u$ is a local solution on $[0,T]$ of  problem~(\ref{kuz})--(\ref{ci}) with $\nu=0$, satisfying~(\ref{regkuznu0}) and~(\ref{conshyper}) for $s=m$.
We multiply  Eq.~(\ref{EqLuv}) by $v_t$ and integrate over $\mathbb{R}^n$\begin{align*}
&\int_{\mathbb{R}^n}L_u v\;v_t\dx\\=&\frac{1}{2}\frac{d}{dt}\left(\int_{\mathbb{R}^n} v_t^2+c^2(\nabla v)^2\dx\right)-\alpha\varepsilon \int_{\mathbb{R}^n} u_t v_{tt} v_t\dx-\beta\varepsilon\int_{\mathbb{R}^n} \nabla u \nabla v_t v_t\dx\\
=&\frac{1}{2}\frac{d}{dt}\left(\int_{\mathbb{R}^n} v_t^2+c^2(\nabla v)^2\dx\right) -\frac{\alpha}{2}\varepsilon\left[\frac{d}{dt} \left(\int_{\mathbb{R}^n} u_t\; v_t^2\dx\right)-\int_{\mathbb{R}^n} u_{tt}\; v_t^2\dx\right] \\
&+\frac{\beta}{2}\varepsilon \int_{\mathbb{R}^n} \Delta u \; (v_t)^2\dx.
\end{align*}
Hence, denoting by
\begin{align}
I[v]= v_t^2+c^2(\nabla v)^2-\alpha\varepsilon  u_t\; v_t^2,\label{eqI}\\
J[v]=2 L_u v\;v_t-\left[ \alpha\varepsilon u_{tt}+\beta\varepsilon \Delta u\right](v_t)^2,\label{eqJ}
\end{align}
we have the following equation
\begin{equation}\label{relIJ}
 \frac{d}{dt}\int_{\mathbb{R}^n}I[v](t,x)\dx=\int_{\mathbb{R}^n}J[v](t,x)\dx.
\end{equation}
Let $A=(A_0,A_1,...,A_n)$ be a multi-index, and $D^Av=\partial_t^{A_0}\partial_{x_1}^{A_1}...\partial_{x_n}^{A_n}$. To prove estimate~(\ref{EqENfirstNV}), we study $\vert \int_{\mathbb{R}^n}J[v](t,x)\dx\vert$ for $v=D^Au$ with $\vert A\vert=A_0+...+A_n\leq m$.

For $m\geq \left[\frac{n}{2}+2\right]$ and a multi-index $A$ with $\vert A\vert\leq m$ we estimate, thanks to the definition of $E_m[u]$ by Eq.~(\ref{EnergEN}),
\begin{align}
\int_{\mathbb{R}^n} \vert u_{tt} (D^Au_t)^2\vert\dx \leq & \Vert u_{tt}\Vert_{L^{\infty}(\mathbb{R}^n)} \Vert D^Au_t\Vert_{L^2(\mathbb{R}^n)}^2\nonumber\\
\leq & C \Vert u_{tt}\Vert_{H^{[\frac{n}{2}+1]}(\mathbb{R}^n)} E_{m}[u]
\leq  C\;E_m[u]^{\frac{3}{2}},\label{Est1a}
\end{align}
with a constant $C>0$, depending only on $n$ by the Sobolev embedding~\cite{ADAMS-1975}~Theorem~ 7.57~p.~228
\begin{equation}\label{injsoblinf}
H^s(\mathbb{R}^n)\hookrightarrow L^{\infty}(\mathbb{R}^n)\;\;\;\;\;\;\text{for }s>\frac{n}{2}.
\end{equation}
In the same way, using the Sobolev embedding~(\ref{injsoblinf}), we obtain
\begin{align}
\int_{\mathbb{R}^n} \vert \Delta u (D^Au_t)^2\vert\dx\leq & \Vert\Delta u\Vert_{L^{\infty}(\mathbb{R}^n)} \Vert D^Au_t\Vert_{L^2(\mathbb{R}^n)}^2 \leq  C \Vert \Delta u \Vert_{H^{[\frac{n}{2}+1]}(\mathbb{R}^n)}E_{m}[u]\nonumber\\
\leq & C \Vert\nabla u\Vert_{H^{m}(\mathbb{R}^n)} E_{m}[u]
\leq  C\; E_m[u]^{\frac{3}{2}}.\label{Est1b}
\end{align}

To calculate $L_u D^A u$ we apply the chain rule of differentiation to $D^AL_u u=0$. As $L_uu=0$ we suppose $\vert A\vert\geq1$.
By developing $D^A(\nabla u \nabla u_t)=\sum_{i=1}^n D^A(\partial_{x_i}u \partial_{x_i}u_t)$ with 
$D^A(u_t \;u_{tt})$, we have
\begin{equation}\label{LuDAu}
L_uD^Au= \varepsilon \sum_j \left(C_j\alpha D^{A^{j1}}u_t\;D^{A^{j2}}u_t+\sum_{i=1}^n E_{ij}\beta D^{A^{j1}}\partial_{x_i}u\;D^{A^{j2}} \partial_{x_i}u\right),
\end{equation}
where $\sum_j$ is a finite sum, with $C_j$ and $E_{ij}$ depending only on $\vert A\vert\leq m$, and $A^{j1}$ and $A^{j2}$ are multi-index such that
\begin{equation}\label{propind}
\left\lbrace
\begin{array}{l}
\vert A^{j1}\vert+\vert A^{j2}\vert=\vert A\vert+1,\\
\vert A^{j1}\vert\geq 1,\;\;\vert A^{j2}\vert\geq 1,\\
A^{j1}_0+ A^{j2}_0=A_0+1,\;\;A^{j1}_i+ A^{j2}_i=A_i\;\;\text{for}\;1\leq i\leq n.\\
\end{array}\right.
\end{equation}

Let us show for $m\geq \left[\frac{n}{2}+2\right]$ the estimate
\begin{equation}\label{EstForProofNV}
 \left| \int_{\mathbb{R}^n} L_u D^A u\; D^A u_t\;\dx\right|\leq C \varepsilon \max(\alpha,\beta) E_m[u]^{\frac{3}{2}}.
\end{equation}
Without loss of generality, we consider two multi-indexes $A^1$ and $A^2$ satisfying~(\ref{propind}) and divide the proof of~(\ref{EstForProofNV}) in two parts: we estimate 
$\int_{\mathbb{R}^n}\vert D^{A^{1}}u_t\;D^{A^{2}}u_t \;D^A u_t\vert\dx$ first, and secondary   $\int_{\mathbb{R}^n}\vert D^{A^{1}}\partial_{x_i} u\;D^{A^{2}}\partial_{x_i}u \;D^A  u_t\vert\dx$. As the proof of each part is very similar, we give the details only for the first one.

 To estimate  $\int_{\mathbb{R}^n}\vert D^{A^{1}}u_t\;D^{A^{2}}u_t \;D^A u_t\vert\dx$, we consider three cases:
 \begin{description}
  \item[Case 1] $1<\vert A^1\vert<m$ and $1<\vert A^2\vert<m$,
  \item[Case 2] $\vert A^1\vert\leq m$ and $\vert A^2\vert=1$,
  \item[Case 3] $\vert A^2\vert\leq m$ and $\vert A^1\vert=1$.
 \end{description}
 Let us detail
\textbf{Case 1} (other cases can be treated in a similar way).

For $2\le\vert A^1\vert\le m-1$ and $2\le\vert A^2\vert\le m-1$, it holds\begin{align*}
\int_{\mathbb{R}^n}\vert D^{A^{1}}u_t\;D^{A^{2}}u_t \;D^A u_t\vert\dx\leq & \Vert D^{A^{1}}u_t \Vert_{L^p(\mathbb{R}^n)} \Vert D^{A^{2}}u_t\Vert_{L^q(\mathbb{R}^n)}\Vert D^A u_t \Vert_{L^2(\mathbb{R}^n)},
\end{align*}
with $\frac{1}{p}+\frac{1}{q}=\frac{1}{2}$ by the general H\"{o}lder inequality~\cite{BREZIS-2005}.
Hence, using the Sobolev embedding~\cite{ADAMS-1975}
\begin{equation}\label{injsoblp}
H^{m_1}(\mathbb{R}^n)\hookrightarrow L^p(\mathbb{R}^n)\;\;\; \text{with }\frac{1}{p}=\frac{1}{2}-\frac{m_1}{n}\text{ and }0 < m_1 < \frac{n}{2},\end{equation}
we find
\begin{align*}
&\int_{\mathbb{R}^n}\vert D^{A^{1}}u_t\;D^{A^{2}}u_t \;D^A u_t\vert\dx \leq C \Vert D^{A^{1}}u_t \Vert_{H^{m_1}(\mathbb{R}^n)} \Vert D^{A^{2}}u_t\Vert_{H^{\frac{n}{2}-m_1}(\mathbb{R}^n)} \Vert D^A u_t \Vert_{L^2(\mathbb{R}^n)}.
\end{align*}
In what follows by $C>0$ is denoted an arbitrary constant depending only on $m$ and on $n$.

We have
$$\Vert D^{A^{1}}u_t \Vert_{H^{m_1}(\mathbb{R}^n)}\leq \Vert \partial_t^{A^{1}_0}u_t \Vert_{H^{m_1+\vert A^1\vert-A^1_0}(\mathbb{R}^n)},$$ 
$$ \Vert D^{A^{2}}u_t\Vert_{H^{\frac{n}{2}-m_1}(\mathbb{R}^n)}\leq \Vert \partial_t^{A^{2}_0}u_t\Vert_{H^{\frac{n}{2}-m_1+\vert A^2\vert-A^2_0}(\mathbb{R}^n)}.$$
We need to find $m$ for which there exists $m_1$ with $0 < m_1 < \frac{n}{2}$, such that
\begin{equation}\label{sysind}
\left\lbrace\begin{array}{rl}
m_1+\vert A^1\vert-A^1_0 & \leq m+1-(A^1_0+1),\\
\frac{n}{2}-m_1+\vert A^2\vert-A^2_0 & \leq m+1-(A^2_0+1),\\
\end{array}\right.
\end{equation}
or equivalently, by~(\ref{propind}) $|A^2|=|A|+1-|A^1|$,
$$\left\lbrace\begin{array}{rl}
m_1+\vert A^1\vert & \leq m,\\
\frac{n}{2}-m_1+\vert A\vert+1-\vert A^1\vert & \leq m.\\
\end{array}\right.$$
As $m-\vert A\vert\geq 0$ it is sufficient to find $m_1$, such that
$$\frac{n}{2}+1\leq m_1+\vert A^1\vert \leq m$$
with $2\leq \vert A^1\vert \leq m-1$ and $0<m_1<\frac{n}{2}$. In particular, the last three inequalities imply that $m\ge [2+\frac{n}{2}]$. For the existence of $m_1$, we  see that, for instance,
\begin{description}
 \item[if $\vert A_1\vert=2$] we can  take $m_1=\frac{n}{2}-\frac{1}{4}$,
 \item[if $2<\vert A_1\vert<\frac{n}{2}+1$] we can take $m_1=\frac{n}{2}+1-\vert A_1\vert$,
 \item[if $\frac{n}{2}+1\leq \vert A_1\vert\leq m-1$] we can take $m_1=\frac{1}{4}$.
\end{description}

Moreover,
$$\Vert D^Au_t\Vert_{L^2(\mathbb{R}^n)}\leq \Vert \partial_t^{A_0}u_t\Vert_{H^{\vert A\vert-A_0}(\mathbb{R}^n)}\leq \Vert \partial_t^{A_0}u_t\Vert_{H^{m-A_0}(\mathbb{R}^n)}.$$
Then, thanks to relations~(\ref{sysind}), we conclude
\begin{align*}
\int_{\mathbb{R}^n}\vert D^{A^{1}}u_t\;D^{A^{2}}u_t \;D^A u_t\vert\dx \leq & C\Vert \partial_t^{A^{1}_0}u_t\Vert_{H^{m-A^{1}_0}(\mathbb{R}^n)} \Vert \partial_t^{A^{2}_0}u_t\Vert_{H^{m-A^{2}_0}(\mathbb{R}^n)} \Vert \partial_t^{A_0}u_t\Vert_{H^{m-A_0}(\mathbb{R}^n)}\\
\leq &C \;E_m[u]^{\frac{3}{2}}.
\end{align*}

Consequently, for $m\geq \left[\frac{n}{2}+2\right]$, and $A^1$ and $A^2$, satisfying  properties~(\ref{propind}), it holds
\begin{equation}\label{ap1kuz0}
\int_{\mathbb{R}^n}\vert D^{A^{1}}u_t\;D^{A^{2}}u_t \;D^A u_t\vert\dx\leq C\; E_m[u]^{\frac{3}{2}}.
\end{equation}

By the same argument, for $m\geq \left[\frac{n}{2}+2\right]$ and $A^1$ and $A^2$, satisfying  properties~(\ref{propind}), we  control   the terms of the form $\int_{\mathbb{R}^n}\vert D^{A^{1}}\partial_{x_i} u\;D^{A^{2}}\partial_{x_i}u \;D^A  u_t\vert\dx$:
\begin{equation}\label{ap2kuz0}
\int_{\mathbb{R}^n}\vert D^{A^{1}}\partial_{x_i} u\;D^{A^{2}}\partial_{x_i}u \;D^A  u_t\vert\dx\leq C E_m[u]^{\frac{3}{2}}.
\end{equation}
Thus, considering~(\ref{LuDAu}),~(\ref{ap1kuz0}) and~(\ref{ap2kuz0}) for $m\geq \left[\frac{n}{2}+2\right]$ and for a multi-index $A$ with $\vert A\vert\leq m$, we have estimate~(\ref{EstForProofNV}).

 Thanks to estimates~(\ref{Est1a}),~(\ref{Est1b}) and~(\ref{EstForProofNV}), we are able to control each term of $J[D^Au]$ from Eq.~(\ref{eqJ}):
\begin{equation}\label{EstJ}
\left\vert \int_{\mathbb{R}^n}J[D^Au](t,x)\dx\right\vert \leq C \max(\alpha,\beta) \varepsilon E_m[u](t)^{\frac{3}{2}}.
\end{equation}
 By the hypothesis  that  $u$ is a local solution of the inviscid Kuznetsov equation, $u$ satisfies Eq.~(\ref{conshyper}), $i.e.$  $\Vert u_t(t)\Vert_{L^{\infty}}\leq \frac{1}{2 \alpha\varepsilon}$ on $[0,T]$, which implies the equivalence  of energies
$$\int_{\mathbb{R}^n} \frac{1}{2} (D^Au_t)^2+c^2(\nabla D^Au)^2\dx \leq \int_{\mathbb{R}^n} I[D^Au]\dx\leq  \int_{\mathbb{R}^n} \frac{3}{2} (D^Au_t)^2+c^2(\nabla D^Au)^2\dx.$$
 We integrate relation~(\ref{relIJ}) over $[0,t]$ with $t\leq T$ to obtain
\begin{align*}
\Vert D^Au_t(t)\Vert_{L^2(\mathbb{R}^n)}^2+& \Vert \nabla D^Au(t)\Vert_{L^2(\mathbb{R}^n)}^2\\
\leq &\frac{(\frac{3}{2}+c^2)}{\min(1/2,c^2)} (\Vert D^Au_t(0)\Vert_{L^2(\mathbb{R}^n)}^2+ \Vert \nabla D^Au(0)\Vert_{L^2(\mathbb{R}^n)v}^2)\\
&+\frac{1}{\min(1/2,c^2)}\int_0^t\int_{\mathbb{R}^n}J(\tau,x)\dx\;\dl.
\end{align*}
Then, using estimate~(\ref{EstJ}), we find
\begin{align*}
\Vert D^Au_t(t)\Vert_{L^2(\mathbb{R}^n)}^2+ & \Vert \nabla D^Au(t)\Vert_{L^2(\mathbb{R}^n)}^2\\
\leq &\frac{(\frac{3}{2}+c^2)}{\min(1/2,c^2)} (\Vert D^Au_t(0)\Vert_{L^2(\mathbb{R}^n)}^2+ \Vert \nabla D^Au(0)\Vert_{L^2(\mathbb{R}^n)}^2)\\
&+\frac{1}{\min(1/2,c^2)}C\max(\alpha,\beta)\varepsilon\int_0^t E_m[u](\tau)^{\frac{3}{2}}\dl.
\end{align*}
As we have this for all multi-index $A$ with $\vert A\vert\leq m$, by summing, we obtain
$$E_m[u](t)\leq\frac{(3+2c^2)}{\min(1/2,c^2)} \; E_m[u](0)+\frac{C\max(\alpha,\beta)}{\min(1/2,c^2)}\varepsilon \int_0^t E_m[u](\tau)^{\frac{3}{2}} \dl$$
with a constant $C>0$, depending only of $n$ and $m$. This gives estimate~(\ref{EqENfirstNV}).
\begin{remark}\label{RemAprop2}
  To prove estimate~(\ref{EqEnSecondNV}) it is sufficient to show, using the  proof of Proposition~\ref{PropApEstNV1}, that
 for $m\geq \left[\frac{n}{2}+3\right]$ and all multi-index $A$ with $\vert A\vert\leq m$
$$\left\vert \int_{\mathbb{R}^n}J[D^Au](s,x)\dx \right\vert \leq C \varepsilon \sqrt{E_{m-1}[u]}E_m[u],$$
where
$J[D^Au]$ is defined in Eq.~(\ref{eqJ}).
 \end{remark}

\section{Illustration of the sharp behavior of Point~1 in Theorem~\ref{ThMainWPnuPGlob}}\label{Appen2}
\begin{theorem}
Let $n\geq 3$, $m\in \mathbb{N}$ be even, $m\geq[\frac{n}{2}+3]$. For $u_0\in H^{m+1}(\mathbb{R}^n)$ and $u_1\in H^m(\mathbb{R}^n)$
 if
\begin{align}\label{controlinitdata}
\Vert \nabla u_0\Vert_{H^m(\mathbb{R}^n)}+& \Vert u_1\Vert_{H^m(\mathbb{R}^n)} \nonumber\\
\leq& \sqrt{\frac{1}{1+\frac{(2c^2+2)^{m+2}-1}{(2c^2+2)^2-1}}\frac{2\nu^2}{(\frac{3}{2}+c^2)C_m^2 \max(\alpha^2,\beta^2)}}=O(\sqrt{\eps^{m+1}}),
\end{align}
then $\sqrt{E_{\frac{m}{2}}[u](0)}\leq \frac{\sqrt{2}\nu}{ \sqrt{\frac{3}{2}+c^2}C_m \max(\alpha,\beta)  }=O(\sqrt{\eps}),$
so that by Theorem \ref{ThDecrEnnu} Point $2$ there exists a unique global solution $u\in C^0(\mathbb{R}^+;H^{m+1}(\mathbb{R}^n))\cap C^1(\mathbb{R}^+;H^{m}(\mathbb{R}^n)) $ of the Cauchy problem associated to the Kuznetsov equation such that for all $t\in \mathbb{R}^+$
\begin{equation}\label{controlEninitdata}
\sqrt{E_{\frac{m}{2}}[u](t)}\leq \sqrt{(\frac{3}{2}+c^2)  \left(1+\frac{(2c^2+2)^{m+2}-1}{(2c^2+2)^2-1}\right)}(\Vert \nabla u_0\Vert_{H^m(\mathbb{R}^n)}+\Vert u_1\Vert_{H^m(\mathbb{R}^n)}).
\end{equation}
\end{theorem}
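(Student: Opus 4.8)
The plan is to reduce the whole statement to a single estimate on the \emph{initial} energy, namely
$$E_{\frac{m}{2}}[u](0)\le\left(1+\frac{(2c^2+2)^{m+2}-1}{(2c^2+2)^2-1}\right)\big(\Vert\nabla u_0\Vert_{H^m(\R^n)}+\Vert u_1\Vert_{H^m(\R^n)}\big)^2,$$
after which everything else is mechanical. Indeed, hypothesis~(\ref{controlinitdata}) is precisely the condition making the right-hand side bounded by $\frac{2\nu^2}{(\frac32+c^2)C_m^2\max(\alpha^2,\beta^2)}$, so that $\sqrt{E_{\frac m2}[u](0)}\le\frac{\sqrt2\nu}{\sqrt{\frac32+c^2}C_m\max(\alpha,\beta)}$, which is exactly the smallness threshold in Theorem~\ref{ThDecrEnnu} Point~2. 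Invoking that theorem then yields the global solution, its regularity, and the time-uniform bound $E_{\frac m2}[u](t)\le(3+2c^2)E_{\frac m2}[u](0)$, from which~(\ref{controlEninitdata}) is read off directly.

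To establish the initial estimate I would evaluate definition~(\ref{EnergEN2}) at $t=0$, where $u(0)=u_0$ and $u_t(0)=u_1$, and split it into the spatial term $\Vert\nabla u_0\Vert_{H^m}^2$ (which accounts for the summand $1$) and the sum $\sum_{i=1}^{\frac m2+1}N_i^2$ with $N_i:=\Vert\partial_t^i u(0)\Vert_{H^{m-2(i-1)}}$. The term $N_1=\Vert u_1\Vert_{H^m}$ is already of the desired form. For $i\ge2$ the key is to express $\partial_t^i u(0)$ entirely through $u_0$ and $u_1$: solving~(\ref{kuz}) for the second time derivative gives $u_{tt}=(1-\alpha\eps u_t)^{-1}\big(c^2\Delta u+\nu\eps\Delta u_t+\beta\eps\nabla u\cdot\nabla u_t\big)$, and differentiating this identity $(i-2)$ times in $t$ and setting $t=0$ produces $\partial_t^i u(0)$ as $c^2\Delta\partial_t^{i-2}u(0)+\nu\eps\Delta\partial_t^{i-1}u(0)$ plus an $\eps$-weighted finite sum of products of time derivatives of $u$ of order strictly less than $i$, all divided by powers of $1-\alpha\eps u_1$.

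The core is then an induction on $i$ proving $N_i\le(2c^2+2)^{i-1}D$, where $D:=\Vert\nabla u_0\Vert_{H^m}+\Vert u_1\Vert_{H^m}$. The smallness of the data keeps $\tfrac12\le1-\alpha\eps u_1$, so the quotient factors cost at most a factor $2$; the two linear terms are controlled by $c^2\Vert\Delta\partial_t^{i-2}u(0)\Vert_{H^{m-2(i-1)}}\le c^2N_{i-2}$ and $\nu\eps\Vert\Delta\partial_t^{i-1}u(0)\Vert_{H^{m-2(i-1)}}=\nu\eps N_{i-1}$ (the raised Sobolev index $m-2(i-1)+2$ is exactly the energy index of $\partial_t^{i-1}u$ and is dominated by that of $\partial_t^{i-2}u$); the remaining products are handled by the Banach-algebra property of $H^s$ for $s>\frac n2$ together with $H^s\hookrightarrow L^\infty$, placing the lower-order factor in $L^\infty$ and the higher one in the appropriate Sobolev norm. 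Since $c^2=O(1/\eps)$ and $D=O(\sqrt{\eps^{\,m+1}})$ is tiny, the $\eps$- and $\nu\eps$-weighted contributions and the quadratic product terms are all dominated by the gap between $c^2$ and $(2c^2+2)^2$, so the inductive bound closes with ample room. Summing $N_i^2\le(2c^2+2)^{2(i-1)}D^2$ for $i=1,\dots,\frac m2+1$ gives the finite geometric series $\sum_{k=0}^{m/2}(2c^2+2)^{2k}=\frac{(2c^2+2)^{m+2}-1}{(2c^2+2)^2-1}$ (using that $m$ is even), which with the spatial term produces the announced factor $1+\frac{(2c^2+2)^{m+2}-1}{(2c^2+2)^2-1}$.

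The main obstacle is the bookkeeping in the inductive step. One must verify that differentiating the nonlinear quotient $(1-\alpha\eps u_t)^{-1}(\cdots)$ in time never demands more spatial regularity than is available — the worst case being the leading $c^2\Delta\partial_t^{i-2}u$ term, which requires $\partial_t^{i-2}u(0)\in H^{m-2(i-1)+2}$, consistent with $u_0\in H^{m+1}$ at $i=2$ — and that every product term genuinely carries a factor $\eps$ and involves only strictly lower time orders, so that the generously chosen constant $2c^2+2$ absorbs all of them uniformly in $\eps$. Once this combinatorial estimate is settled, the geometric summation and the appeal to Theorem~\ref{ThDecrEnnu} are routine.
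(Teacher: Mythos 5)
Your proposal matches the paper's own proof in both strategy and detail: the paper likewise reduces everything to the initial-energy bound $E_{\frac{m}{2}}[u](0)\le\bigl(1+\frac{(2c^2+2)^{m+2}-1}{(2c^2+2)^2-1}\bigr)(\Vert\nabla u_0\Vert_{H^m}+\Vert u_1\Vert_{H^m})^2$, proved by solving the equation for $u_{tt}$, differentiating in time, and inducting to get $\Vert\partial_t^k u_t(0)\Vert_{H^{m-2k}}\le a_k(\Vert\nabla u_0\Vert_{H^m}+\Vert u_1\Vert_{H^m})$ with $a_k\le(2c^2+2)^k$ (using the Sobolev product embeddings, the bound $\Vert(1-\alpha\eps u_1)^{-1}\Vert_\infty\le 2$, and smallness of $\eps$), before summing the geometric series and invoking Theorem~\ref{ThDecrEnnu} Point~2. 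The only cosmetic difference is that the paper first derives an explicit recurrence $a_{k+1}=a_k+2c^2a_{k-1}+2\sum_{i=0}^k a_i+1$ and then bounds it by $(2c^2+2)^{k+1}$, whereas you close the induction directly on $(2c^2+2)^{i-1}$, which is valid for the same reason (the slack from $c^2\gg 1$).
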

\begin{proof}
We want to show (\ref{controlinitdata}).
 To do it, we  perform the induction on $i\in \lbrace 0;1;...;\frac{m}{2}\rbrace$ proving that   the time derivatives of the solution of the Cauchy problem~(\ref{kuz})--(\ref{ci}) $u$ at $t=0$ satisfy for all $i\in \lbrace 0;1;...;\frac{m}{2}\rbrace$ and for $k\in \mathbb{N}$, $0\leq k\leq i$ the following estimate
\begin{equation}\label{controlnorminitk}
\Vert \partial_t^k u_t(0)\Vert_{H^{m-2k}(\mathbb{R}^n)}\leq a_k(\Vert \nabla u_0\Vert_{H^m(\mathbb{R}^n)}+\Vert u_1\Vert_{H^m(\mathbb{R}^n)}),
\end{equation}
with $a_0=1$, $a_1=2c^2+2$ and
$$a_{k+1}=a_k+2c^2 a_{k-1}+2\sum_{i=0}^k a_i+1 \quad \hbox{for } 1\leq k\leq \frac{m}{2}-1.$$
For $i=0$ the proof is direct.
For $i=1$  from the Kuznetsov equation we have
$$u_{tt}(0)=\frac{1}{1-\alpha \varepsilon u_1}(c^2 \Delta u_0+\nu \varepsilon \Delta u_1+ \beta \varepsilon \nabla u_0\nabla u_1).$$
As for a small enough $\eps$ it holds $\Vert \frac{1}{1-\alpha \varepsilon u_1}\Vert_{\infty}\leq 2$, taking the $\Vert .\Vert_{H^{m-2}(\mathbb{R}^n)}-$norm of the last equality we obtain
\begin{align}\label{controlnorminit1}
\Vert u_{tt}(0) \Vert_{H^{m-2}(\mathbb{R}^n)}\nonumber\leq  2(c^2 & \Vert \Delta u_0\Vert_{H^{m-2}(\mathbb{R}^n)}+\nu \varepsilon \Vert \Delta u_1\Vert_{H^{m-2}(\mathbb{R}^n)}\nonumber\\
&+\beta \varepsilon \Vert\nabla u_0\nabla u_1\Vert_{H^{m-2}(\mathbb{R}^n)}).
\end{align}
Thanks to~\cite{ADAMS-1975} we have
for all $l\in \mathbb{N}$ and for all $k\in \mathbb{N}$, $0\leq l\leq m$ and $0\leq k\leq m-l$
the continuous embedding of the product
\begin{equation}\label{injsobprod}
H^{m-l}(\mathbb{R}^n)\times H^{k+l}(\mathbb{R}^n) \hookrightarrow H^k(\mathbb{R}^n).
\end{equation}
Thus we can write for (\ref{controlnorminit1})
\begin{align*}
\Vert u_{tt}(0)  \Vert_{H^{m-2}(\mathbb{R}^n)}\nonumber \leq 2(c^2 & \Vert \nabla u_0\Vert_{H^{m}(\mathbb{R}^n)}+\nu \varepsilon \Vert u_1\Vert_{H^{m}(\mathbb{R}^n)}\\
&+\beta \varepsilon  K \Vert \nabla u_0\Vert_{H^{m-1}(\mathbb{R}^n)}\Vert \nabla u_1\Vert_{H^{m-1}(\mathbb{R}^n)}),
\end{align*}
and by Young's inequality we find
\begin{align}
\Vert u_{tt}(0)\Vert_{H^{m-2}(\mathbb{R}^n)}\nonumber\leq  2\left[c^2\right. &\Vert \nabla u_0\Vert_{H^{m}(\mathbb{R}^n)}+\nu \varepsilon \Vert u_1\Vert_{H^{m}(\mathbb{R}^n)}\nonumber\\
&\left.+\frac{1}{2}\beta  \varepsilon K\left(\Vert \nabla  u_0\Vert_{H^{m}(\mathbb{R}^n)}^2+ \Vert u_1\Vert_{H^{m}(\mathbb{R}^n)}^2\right)\right].\label{controlnorminit2}
\end{align}
Choosing $\eps$ small enough such that

$$\beta \varepsilon K\Vert \nabla u_0\Vert_{H^{m}(\mathbb{R}^n)}\leq 1,\;\;\;\beta \varepsilon K\Vert  u_1\Vert_{H^{m}(\mathbb{R}^n)}\leq 1,\;\;\; \nu \varepsilon\leq\frac{1}{2},$$  from (\ref{controlnorminit2}) it follows
\begin{align*}
\Vert u_{tt}(0)\Vert_{H^{m-2}(\mathbb{R}^n)}\leq& 2 \left[\left( c^2+\frac{1}{2}\right)\Vert \nabla u_0\Vert_{H^{m}(\mathbb{R}^n)}+ \left(\frac{1}{2}+\frac{1}{2}\right) \Vert u_1\Vert_{H^{m}(\mathbb{R}^n)}\right]\\
\leq& (2c^2+2) (\Vert \nabla u_0\Vert_{H^m(\mathbb{R}^n)}+\Vert u_1\Vert_{H^m(\mathbb{R}^n)}).
\end{align*}
Let define now the induction hypothesis: for $i\in \lbrace 0;1;...;\frac{m}{2}-1\rbrace$ for $k\in \mathbb{N}$, $0\leq k\leq i$ it holds estimate (\ref{controlnorminitk}). Now we want to show it for $i+1$, by the induction hypothesis we just need to show
$$\Vert \partial_t^{i+1} u_t(0)\Vert_{H^{m-2(i+1)}(\mathbb{R}^n)}\leq a_{i+1}(\Vert \nabla u_0\Vert_{H^m(\mathbb{R}^n)}+\Vert u_1\Vert_{H^m(\mathbb{R}^n)}).$$
Deriving $i$-times on time the Kuznetsov equation,  for $i\geq 1$ we obtain
\begin{align*}
\partial^i_t u_{tt}(0)=& \frac{1}{1-\alpha u_1}\Big(c^2 \Delta \partial^i_t u(0) +\nu \varepsilon \Delta \partial^i_t u_t(0)+ \alpha \varepsilon \sum_{k=0}^{i-1} C^k_i \partial^{i-k}_t u_t(0) \partial^k_t u_{tt}(0)\\
& \;\;\;\;\;\;\;\;\; +\beta \varepsilon \sum_{k=0}^{i} C^k_i \nabla \partial^{i-k}_t u(0) \nabla \partial^k_t u_t(0)\Big).
\end{align*}
We take the $\Vert .\Vert_{H^{m-2(i+1)}}-$norm of this equation and in the same way as for $i=1$ we show that
\begin{align*}
 \Vert \partial^{i+1}_t u_{t}(0)&\Vert_{H^{m-2(i+1)}(\mathbb{R}^n)}\\
 \leq & \left(2c^2 a_{i-1}+a_i+2\sum_{k=0}^i a_k+1\right)(\Vert \nabla u_0\Vert_{H^m(\mathbb{R}^n)}+\Vert u_1\Vert_{H^m(\mathbb{R}^n)})\\
 \leq & a_{i+1} (\Vert \nabla u_0\Vert_{H^m(\mathbb{R}^n)}+\Vert u_1\Vert_{H^m(\mathbb{R}^n)}).
\end{align*}
This concludes the induction.

With the induction result we have  for $k\in \mathbb{N}$, $0\leq k\leq \frac{m}{2}$
\begin{align*}
\Vert \partial_t^k u_t(0)\Vert_{H^{m-2k}(\mathbb{R}^n)}\leq a_k(\Vert \nabla u_0\Vert_{H^m(\mathbb{R}^n)}+\Vert u_1\Vert_{H^m(\mathbb{R}^n)}),
\end{align*}
where
$$a_k\leq(2c^2+2)^k.$$
Therefore we can write
\begin{align*}
E_{\frac{m}{2}}[u](0)\leq & \left(1+\sum_{i=0}^{\frac{m}{2}} a_i^2\right)(\Vert \nabla u_0\Vert_{H^m(\mathbb{R}^n)}+\Vert u_1\Vert_{H^m(\mathbb{R}^n)})^2\\
\leq & \left(1+\frac{(2c^2+2)^{m+2}-1}{(2c^2+2)^2-1}\right)(\Vert \nabla u_0\Vert_{H^m(\mathbb{R}^n)}+\Vert u_1\Vert_{H^m(\mathbb{R}^n)})^2.
\end{align*}
Hence, taking the initial data satisfying estimate (\ref{controlinitdata}) we have the following estimate for the initial energy
$$E_{\frac{m}{2}}[u](0)\leq \frac{2\nu^2}{(\frac{3}{2}+c^2)C_m^2 \max(\alpha^2,\beta^2)}.$$
Consequently, by Theorem~\ref{ThDecrEnnu} Point 2 for all $t\in \mathbb{R}^+$ we obtain estimate (\ref{controlEninitdata}).
\end{proof}


\def\refname{References}
\bibliographystyle{siam}
\bibliography{ref}

\end{document}